\documentclass[reqno,12pt,letterpaper]{amsart}
\usepackage{amsmath,amssymb,amsthm,graphicx,mathrsfs,url}
\usepackage[usenames,dvipsnames]{color}
\usepackage[colorlinks=true,linkcolor=Red,citecolor=Green]{hyperref}

\def\?[#1]{\textbf{[#1]}\marginpar{\Large{\textbf{??}}}}

\setlength{\textheight}{8.50in} \setlength{\oddsidemargin}{0.00in}
\setlength{\evensidemargin}{0.00in} \setlength{\textwidth}{6.08in}
\setlength{\topmargin}{0.00in} \setlength{\headheight}{0.18in}
\setlength{\marginparwidth}{1.0in}
\setlength{\abovedisplayskip}{0.2in}
\setlength{\belowdisplayskip}{0.2in}
\setlength{\parskip}{0.05in}

\DeclareGraphicsRule{*}{mps}{*}{}

\newtheorem{theo}{Theorem}[section]
\newtheorem{prop}[theo]{Proposition}

\newtheorem{lemm}[theo]{Lemma}
\newtheorem{corr}[theo]{Corollary}
\newtheorem{rem}{Remark}
\numberwithin{equation}{section}

\newcommand{\mc}{\mathcal}
\newcommand{\rr}{\mathbb{R}}
\newcommand{\nn}{\mathbb{N}}
\newcommand{\cc}{\mathbb{C}}
\newcommand{\hh}{\mathbb{H}}
\newcommand{\zz}{\mathbb{Z}}

\newcommand{\la}{\lambda}
\newcommand{\eps}{\epsilon}

\newcommand{\pl}{\partial}
\newcommand{\x}{\times}

\newcommand{\til}{\widetilde}
\newcommand{\bbar}{\overline}

\newcommand{\cjd}{\rangle}
\newcommand{\cjg}{\langle}

\newcommand{\demi}{\tfrac{1}{2}}

\DeclareMathOperator{\Res}{Res}

\DeclareMathOperator{\supp}{supp}

\def\indic{\operatorname{1\hskip-2.75pt\relax l}}

\title[Classical and Quantum resonances]{Classical and quantum resonances for hyperbolic surfaces}
\author{Colin Guillarmou}
\email{cguillar@dma.ens.fr}
\address{DMA, U.M.R. 8553 CNRS, \'Ecole Normale Superieure, 45 rue d'Ulm,
75230 Paris cedex 05, France}

\author{Joachim Hilgert}
\email{hilgert@math.upb.de}
\address{Universit\"at Paderborn, Warburgerstr. 100, 33098 Paderborn, Germany}
\author{Tobias Weich}
\email{weich@math.upb.de}
\address{Universit\"at Paderborn, Warburgerstr. 100, 33098 Paderborn, Germany}
\begin{document}
\maketitle

\begin{abstract} For compact and for convex co-compact oriented hyperbolic surfaces, we prove 
an explicit correspondence between classical Ruelle resonant states and quantum resonant states, except at negative integers 
where the correspondence involves holomorphic sections of line bundles. 
\end{abstract}

\section{Introduction}
It is a classical result that on compact surfaces with constant negative curvature,
Selberg's trace formula allows to identify the eigenvalues of the Laplace 
operator $\Delta$ and certain zeros of the Selberg zeta function \cite{Se}, which can be entirely 
expressed in terms of closed geodesics. Later the same result has been 
established for convex-co-compact hyperbolic surfaces by 
Patterson-Perry \cite{PaPe} (see also \cite{BJP,GuZw3}), where the correspondence
is between certain zeros of the Selberg zeta function and the resonances of the Laplacian
(recall that convex co-compact hyperbolic surfaces are complete non-compact smooth Riemannian
surfaces of constant negative curvature with infinite volume). 
Both results show that on hyperbolic surfaces there is a 
deep connection between the spectral properties of the Laplacian (quantum mechanics)
and the properties of the geodesic flow (classical mechanics). However, the above 
results do not establish a link between the spectra of the Laplacian and a transfer
operator associated to the geodesic flow, nor do they establish a relation between
the associated resonant states. The aim of this article is to prove such an 
explicit correspondence. The 
previously known relation to the zeta zeros is a direct consequence of this 
correspondence.

Let us now introduce the concept of Ruelle resonances for the transfer operator associated to the geodesic flow on $M$. Let $M$ be a compact or convex co-compact 
hyperbolic surface and let $X$ be 
the vector field generating the geodesic flow $\varphi_t$ on the unit tangent 
bundle $SM$ of $M$. The linear operator
\[
 \mathcal L_t: \left\{ \begin{array}{ccc}
                       C_c^\infty(SM) &\to&C_c^\infty(SM)\\
                       f&\mapsto &f \circ \varphi_{-t}
                      \end{array}
\right.
\]
is called the \emph{transfer operator} of the geodesic flow and the vector field
$-X$ is its generator. The geodesic flow has the Anosov property, i.e. the tangent 
bundle of $SM$ splits into a direct sum  
\[ 
T(SM)=\rr X\oplus E_s\oplus E_u
\]
where $d\varphi_t$ is exponentially contracting in forward time (resp. backward time) 
on $E_s$ (resp. on $E_u$), and this decomposition is $\varphi_t$-invariant. The 
bundles $E_u$ and $E_s$ are smooth and there are two smooth non-vanishing
vector fields $U_\pm$ on $SM$ so that $E_s=\rr U_+$, $E_u=\rr U_-$, and 
$[X,U_\pm]=\pm U_\pm$. The fields $U_\pm$ generate the horocyclic flows.

For $f_1,f_2\in C_c^\infty(SM)$ we can define the correlation functions
\[ 
C_X(t;f_1,f_2):=\int_{SM}\mathcal L_t f_1. f_2d\mu_L
\]
where $\mu_L$ is the Liouville measure (invariant by $\varphi_t$). By 
\cite{BuLi,FaSj,DyZw} 
in the compact case and \cite{DyGu} in the convex co-compact case, the Laplace transform 
\[
R_X(\la;f_1,f_2):=-\int_0^\infty e^{-\la t}C_X(t; f_1,f_2)dt
\]
extends meromorphically from ${\rm Re}(\la)>0$ to $\cc$\footnote{The 
extension in a strip has been proved in \cite{Po, Ru}  before}. 
Notice that for 
${\rm Re}(\la)>0$ we have $R_X(\la;f_1,f_2)=\cjg(-X-\la)^{-1}f_1,f_2\cjd$ and 
$R_X(\la)$ gives a meromorphic extension of the Schwartz kernel of the resolvent of $-X$. 
The poles are called \emph{Ruelle resonances} and 
the residue operator $\Pi^X_{\la_0}:C_c^\infty(SM)\to \mc{D}'(SM)$
defined by 
\[ 
\cjg \Pi^X_{\la_0}f_1,f_2\cjd:={\rm Res}_{\la_0}R_X(\la;f_1,f_2), \quad \forall f_1,f_2\in C^\infty_c(SM)
\] 
has finite rank,  commutes with $X$, and $(-X-\la_0)$ is nilpotent on its range. 
The elements in the range of $\Pi_{\la_0}^X$ are called \emph{generalized Ruelle 
resonant states}. Note that by the results in \cite{BuLi, FaSj, DyZw, DyGu} the poles
can be identified with the discrete spectrum of $X$ in certain
Hilbert spaces and the generalized resonant states with generalized eigenfunctions.

The quantum resonances on $M$ can be introduced in a quite similar fashion, 
except that we have to work with the wave flow. 
Let $\Delta_M$ be the non-negative Laplacian and $U(t):=\cos(t\sqrt{\Delta_M-1/4})$ 
the wave operator on $M$. For $f_1,f_2\in C_c^\infty(M)$, we define the correlation function
\[ C_{\Delta}(t;f_1,f_2):=\int_M U(t)f_1.f_2\, {\rm dvol}.\]
Then, by standard spectral theory in the compact case, and by \cite{MaMe,GuZw} in 
the convex co-compact case, the Laplace transform
\[ 
R_\Delta(\la;f_1,f_2):=\frac{1}{1/2-\la}\int_{0}^\infty e^{(-\la +1/2)t}C_\Delta(t; f_1,f_2)dt
\]
extends meromorphically from ${\rm Re}(\la)>1$ to $\la\in \cc$. Notice that 
$R_{\Delta}(\la;f_1,f_2)=\cjg(\Delta_M-\la(1-\la))^{-1}f_1,f_2\cjd$ for ${\rm Re}(\la)>1$ and $R_\Delta(\la)$ is 
a meromorphic extension of the resolvent of $\Delta_M$.
The poles are called \emph{quantum resonances} and 
the residue operator $\Pi^\Delta_{\la_0}:C_c^\infty(M)\to C^\infty(M)$ defined by 
\[
\cjg \Pi^\Delta_{\la_0}f_1,f_2\cjd:={\rm Res}_{\la_0}R_\Delta(\la;f_1,f_2), \quad \forall f_1,f_2\in C^\infty_c(M)
\] 
has finite rank, commutes with $\Delta_M$, and $(\Delta_M-\la_0(1-\la_0))$ is nilpotent 
on its range. The elements in the range of $\Pi_{\la_0}^\Delta$ are called 
\emph{generalized quantum resonant states}. 

Note that the Ruelle generalized resonant states are distributions on $SM$ while the 
quantum resonant states are functions on $M$. In order to formulate the explicit 
correspondence between them, we consider the projection 
$\pi_0:SM\to M$ on the base and 
${\pi_0}_*:\mc{D}'(SM)\to \mc{D}'(M)$ the operator dual to the pull-back 
$\pi_0^*:C_c^\infty(M)\to C_c^\infty(SM)$ (note that ${\pi_0}_*$ corresponds to integration in the fibers of $SM$). In order to state the theorems, we also need 
to introduce the canonical line bundle $\mc{K}:=(T^*M)^{1,0}$
and its dual $\mc{K}^{-1}:=(T^*M)^{0,1}$, and we denote their  tensor powers 
by $\mc{K}^n:=\mc{K}^{\otimes n}$ and 
$\mc{K}^{-n}:=(\mc{K}^{-1})^{\otimes n}$. Here the Riemann surface is oriented and thus inherits a complex structure.  
 Then there is a natural map
\[
\pi^*_n:C_c^\infty(M;\mc{K}^{n})\to C_c^\infty(SM), \quad \pi_n^*f(x,v):=f(x)(\otimes^n v)
\]
and we consider its dual operator ${\pi_n}_*:\mc{D}'(SM)\to \mc{D}'(M;\mc{K}^n)$ which can be viewed
 as the $n$-th Fourier component in the fibers of $SM$ ($SM$ is equipped with the Liouville measure and $M$ with the Riemannian volume form to define the pairings).

Let us formulate the first main result: 
\begin{theo} \label{compactcase}
Let $M=\Gamma\backslash \hh^{2}$ be a smooth oriented compact hyperbolic surface 
and let $SM$ be its unit tangent bundle. Then\\
1) for each $\la_0\in \cc\setminus (-\demi\cup  -\nn)$  the pushforward map 
${\pi_0}_*$ restricts to a linear isomorphism of complex vector spaces
\[
{\pi_0}_* : {\rm Ran}(\Pi_{\la_0}^X)\cap \ker U_- \to {\rm Ran}\, \Pi_{\la_0+1}^\Delta =\ker (\Delta_M+\la_0(1+\la_0)) 
\]
2) for $\la_0=-\demi$, the map 
\[
{\pi_0}_* : {\rm Ran}(\Pi_{\la_0}^X)\cap \ker U_- \to \ker (\Delta_M-\tfrac{1}{4})
\]
is surjective and has a kernel of complex dimension  $\dim \ker (\Delta_M-\tfrac{1}{4})$. In other words, the multiplicity of $-1/2$ as a Ruelle resonances is equal to twice the multiplicity of $1/4$ as eigenvalue of $\Delta_M$.\\
3) For $\la_0=-n\in -\nn$, the following map is an isomorphism of complex vector spaces
\[
 {\pi_n}_*\oplus {\pi_{-n}}_* : {\rm Ran}(\Pi_{-n}^X)\cap \ker U_-\to H_{n}(M)\oplus H_{-n}(M)
\]
with $H_n(M):=\{u\in C^\infty(M; \mc{K}^n); \bbar{\pl}u=0\}$,  
$H_{-n}(M):=\{u\in C^\infty(M; \mc{K}^{-n}); \pl u=0\}$. Consequently, the complex 
dimension of ${\rm Ran}(\Pi_{-n}^X)\cap \ker U_-$ is $(2n-1)|\chi(M)|$ if $n>1$ and $|\chi(M)|+2$ if $n=1$, where $\chi(M)$ is the Euler characteristic.
\end{theo}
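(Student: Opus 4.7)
The argument combines a Casimir-type intertwining on $SM$ with a boundary-distribution interpretation of horocyclic-invariant resonant states; the main step is identifying $\pi_{0*}$ with the Helgason--Poisson transform and controlling its behavior at the reducibility points of the spherical principal series. Concretely, using the $\mathfrak{sl}_2(\rr)$ relations $[X, U_\pm] = \pm U_\pm$ and $[U_+, U_-] = 2X$, set $\Omega := X(X-1) + U_+U_-$. A short calculation shows $\Omega$ is formally self-adjoint on $SM$ (for the Liouville measure), commutes with the fiber-rotation generator $V = U_+ - U_-$, and acts as $-\Delta_M$ on fiber-constant functions $\pi_0^* f$. Duality then yields the intertwining
\[
\pi_{0*} \circ \Omega = -\Delta_M \circ \pi_{0*}
\]
on distributions. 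For $u$ with $Xu = -\la_0 u$ and $U_- u = 0$, direct computation gives $\Omega u = \la_0(\la_0+1)u$, hence $\pi_{0*} u \in \ker(\Delta_M + \la_0(1+\la_0))$. Self-adjointness of $\Delta_M$ in the compact case ensures this kernel equals $\mathrm{Ran}\,\Pi_{\la_0+1}^\Delta$, and a similar computation with $(X+\la_0)^N$ handles generalized resonant states.

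\emph{Generic case.} Write $SM = \Gamma\backslash G$ with $G = \PSL(2,\rr)$. An element $u$ with $U_- u = 0$ and $Xu = -\la_0 u$ descends to a $\Gamma$-equivariant distribution on the boundary $\pl\hh^2 \simeq G/P$, twisted by the character $s := \la_0+1$ of the diagonal torus $A = \exp(\rr X)$. Under this correspondence $\pi_{0*}$ becomes the Helgason--Poisson transform from boundary distributions to $\Delta_M$-eigenfunctions of eigenvalue $s(1-s) = -\la_0(1+\la_0)$. Its distributional version is an isomorphism of $\Gamma$-modules provided $s$ avoids the reducibility locus of the spherical principal series, namely $s \in \{\demi\} \cup \zz_{\leq 0}$, which matches precisely $\la_0 \in \{-\demi\} \cup -\nn$. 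This yields part (1).

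\emph{Exceptional cases and dimensions.} At $\la_0 = -\demi$ the two Poisson transforms at $s$ and $1-s$ coincide, and a logarithmic derivative at $s = \demi$ produces an additional preimage for each $\tfrac14$-eigenfunction, giving the doubling in part (2). At $\la_0 = -n \in -\nn$, the principal series is reducible and the kernel of the Poisson transform on the boundary side decomposes into the holomorphic and antiholomorphic discrete series of weight $n$. Their extremal $K$-types are holomorphic sections of $\mc{K}^n$ (resp.\ antiholomorphic sections of $\mc{K}^{-n}$), extracted precisely by $\pi_{n*}$ and $\pi_{-n*}$; the $\bar\pl$- and $\pl$-closedness conditions follow by expressing these Cauchy--Riemann operators as combinations of horocyclic operators and using $U_- u = 0$. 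Riemann--Roch on $M$ gives $\dim H_n(M) = (2n-1)(g-1)$ for $n>1$ and $\dim H_1(M) = g$; summing the two pieces yields $(2n-1)|\chi(M)|$ for $n>1$ and $2g = |\chi(M)|+2$ for $n=1$, proving (3).

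\emph{Main obstacle.} The central technical difficulty is implementing the boundary-distribution picture for Ruelle resonant states, which are distributions with wavefront set in $E_u^*$ rather than smooth sections: the Helgason--Poisson correspondence must be extended to this distributional setting compatibly with the microlocal regularity of \cite{FaSj, DyZw}. A second delicate point is the analysis at the reducibility loci $s = \demi$ and $s \in \zz_{\leq 0}$, where one must identify the extra kernel directions (the logarithmic and discrete-series contributions) with the concrete geometric objects $\ker(\Delta_M - \tfrac14)$ and $H_{\pm n}(M)$ appearing on the right-hand side.
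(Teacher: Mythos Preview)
Your overall strategy---lift to $S\hh^2=\Gamma\backslash G$, identify horocyclic-invariant resonant states with $\Gamma$-equivariant boundary distributions, and realize $\pi_{0*}$ as the Poisson--Helgason transform---is exactly the paper's. The Casimir computation $\Omega u=\la_0(\la_0+1)u$ on $\ker U_-\cap\ker(X+\la_0)$ is also the paper's Lemma~\ref{u0=0}. However, there are two genuine gaps and one misconception.

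\textbf{Jordan blocks.} The domain in part~(1) is ${\rm Ran}(\Pi_{\la_0}^X)\cap\ker U_-$, which a priori contains \emph{generalized} resonant states, not just $\ker(X+\la_0)$. The isomorphism with the (Jordan-block-free) eigenspace $\ker(\Delta_M+\la_0(1+\la_0))$ therefore requires proving $V_0^j(\la_0)=V_0^1(\la_0)$ for $j>1$. Your sentence ``a similar computation with $(X+\la_0)^N$ handles generalized resonant states'' does not do this: the Casimir argument only shows $\pi_{0*}$ sends a Jordan chain for $X$ to a Jordan chain for $\Delta_M$, which for generic $\la_0$ is impossible by self-adjointness---but one still needs $\pi_{0*}$ \emph{injective} on the full chain to conclude. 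The paper handles this by constructing, from a putative chain $u^{(0)},u^{(1)}$, a function $\til\varphi_1=-(1+2\la_0)^{-1}(\pl_\la\mc{P}_{\la_0}(\omega^{(0)})-\mc{P}_{\la_0}(\omega^{(1)}))$ solving $(\Delta_M+\la_0(1+\la_0))\varphi_1=\varphi_0$ on $M$, then pairing against $\varphi_0$. This needs the $\la$-derivative of the Poisson kernel and the factor $(1+2\la_0)^{-1}$, which is precisely what fails at $\la_0=-\tfrac12$.

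\textbf{The point $\la_0=-\tfrac12$.} Your explanation via ``reducibility of the spherical principal series at $s=\tfrac12$'' is incorrect: the spherical principal series of $\PSL_2(\rr)$ at $s=\tfrac12$ is irreducible (it is the tempered endpoint), and the Poisson transform $\mc{P}_{-1/2}$ is still a bijection (Lemma~\ref{poissoniso}). The doubling is a Jordan-block phenomenon: the paper shows that for each $\varphi_0\in\ker(\Delta_M-\tfrac14)$ the $\log$-term in the boundary asymptotics of $\mc{P}_{-1/2}(\omega^{(0)})$ produces a second boundary distribution $\omega^{(1)}$ with a cocycle-type equivariance, and from it a genuine order-$1$ Jordan chain $u^{(0)},u^{(1)}\in\ker U_-$. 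One must also rule out order-$2$ chains, which the paper does by a three-term version of the argument above.

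\textbf{The points $\la_0=-n$.} Here your representation-theoretic sketch is plausible but incomplete on surjectivity. The paper avoids the discrete-series language entirely: it works with the fiberwise Fourier modes $u_k$ and the raising/lowering operators $\eta_\pm$, shows via the recursion $2\eta_\pm u_{k\mp1}=(n\mp k)u_k$ (Lemma~\ref{recurrence_relation}) that $u_k=0$ for $|k|<n$ and $\eta_-u_n=0$, $\eta_+u_{-n}=0$, and then for surjectivity \emph{constructs} $u=\sum_{k\ge n}u_k$ from a given $u_n\in H_n(M)$ by $u_k=\tfrac{2}{n-k}\eta_+u_{k-1}$. The non-obvious step is that this formal series defines a distribution; the paper proves $\|u_{n+\ell}\|_{L^2}^2=\tfrac{\Gamma(2n+\ell)}{\Gamma(\ell+1)\Gamma(2n)}\|u_n\|_{L^2}^2$ grows polynomially in $\ell$. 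Your proposal does not address this convergence issue, which is the crux of surjectivity in case~(3).
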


Theorem \ref{compactcase} gives a full characterization of the Ruelle resonant 
states, that are invariant under the horocyclic flow. We call these resonances 
\emph{the first band} of Ruelle resonances.
Part 1) of Theorem \ref{compactcase}
has been proved in \cite{DFG}  
in any dimension. We slightly simplify the argument and 
characterize all first band resonant states including 
the particular points  $\la_0\in -1/2-\nn_0/2$ which were left out in \cite{DFG}. 
Theorem~\ref{compactcase} is also related to the 
classification of horocyclic invariant distributions by \cite{FlFo} but we use a 
different approach avoiding the Plancherel decomposition into unitary irreducible 
representations. This has the advantage that our approach gives a more geometric 
description of the resonant states and that it extends to the convex co-compact setting.
In Theorem~\ref{noninteger}  we give slightly more precise
statements including the description of possible Jordan blocks in the Ruelle spectrum. 

To state our result in the convex co-compact setting, let us first recall some geometric definitions.
A convex co-compact hyperbolic surface $M$ can be realized as a quotient 
$M=\Gamma\backslash \hh^2$ of the hyperbolic plane where 
$\Gamma\subset {\rm PSL}_2(\rr)$ a discrete subgroup whose non-trivial elements are  
hyperbolic transformations. Viewing $\Gamma$ as a subgroup of 
${\rm PSL}_2(\cc)$, it also acts by conformal transformations on the 
Riemann sphere $\bbar{\cc}\simeq \mathbb{S}^2$, and the action is free 
and properly discontinuous on the complement of the limit set 
$\Lambda_\Gamma\subset \rr$, which can be defined as the closure of 
the set of fixed points of non-trivial elements $\gamma\in\Gamma$. The quotient 
$M_2:=\Gamma\backslash (\bbar{\cc}\setminus \Lambda_\Gamma)$ is a compact 
Riemann surface containing two copies $M_\pm $ of $M$ corresponding to 
$M_\pm:=\Gamma\backslash \{\pm {\rm Im}(z)>0\}$, 
and $\bbar{M}:=\Gamma\backslash (\{{\rm Im}(z)\geq 0\}\setminus \Lambda_\Gamma)$ 
provides a smooth conformal compactification to $M$ in which $\pl\bbar{M}$ represents 
the geometric infinity of $M$. The surface $M_2$ has an involution $\mc{I}$ 
induced by $z\mapsto \bar{z}$ and fixing $\pl \bbar{M}$. 

\begin{theo} \label{convexcocompactcase}
Let $M=\Gamma\backslash \hh^{2}$ be a smooth oriented convex co-compact hyperbolic 
surface 
and let $SM$ be its unit tangent bundle. Then\\
1) for each $\la_0\in \cc\setminus -\nn$ the pushforward map 
${\pi_0}_*$ restricts to a linear isomorphism of complex vector spaces
\[
{\pi_0}_* : {\rm Ran}(\Pi_{\la_0}^X)\cap \ker U_- \to {\rm Ran}(\Pi_{\la_0+1}^\Delta).
\]
If $\la_0\in -1/2-\nn$, ${\rm Ran}(\Pi_{\la_0}^X)\cap \ker U_-=0$ and 
${\rm Ran}(\Pi_{\la_0+1}^\Delta)=0$.
\\
2) For $\la_0=-n\in -\nn$, the following map is an isomorphism of real vector spaces if $\Gamma$ is not cyclic:
\[
 {\pi_n}_*: {\rm Ran}(\Pi_{-n}^X)\cap \ker U_-\to H_{n}(M).
\]
Here $H_n(M):=\{f|_{\bbar{M}}; f\in C^\infty(M_2; \mc{K}^n), \bbar{\pl}f=0, \mc{I}^*f=\bar{f}\}$ and $\mc{I}:M_2\to M_2$ is the natural involution fixing $\pl\bbar{M}$ in $M_2$. Consequently the real dimension of ${\rm Ran}(\Pi_{-n}^X)\cap \ker U_-$ is $(2n-1)|\chi(M)|$ if $n>1$ and $|\chi(M)|+1$ if $n=1$.
\end{theo}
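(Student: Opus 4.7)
The approach is to transport the strategy used for the compact case (Theorem~\ref{compactcase}, following \cite{DFG}) to the convex co-compact setting, replacing the $L^2$ spectral calculus of $\Delta_M$ by the Mazzeo--Melrose/Guillopé--Zworski meromorphic resolvent, and the $L^2$ spectral calculus of $-X$ by its Dyatlov--Guillarmou meromorphic extension on anisotropic Sobolev spaces. The algebraic skeleton is identical: from the commutators $[X,U_\pm]=\pm U_\pm$ and an identity of the form $\Delta_M\,\pi_{0*}=\pi_{0*}(-X(X+1)+U_-U_+)$, with $\mc{K}^n$-twisted analogues for $\pi_{n*}$, elements of $\ker U_-$ in the first band of Ruelle resonances map under $\pi_{n*}$ to resonant sections of $\mc{K}^n$ at the appropriate eigenvalue. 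Injectivity of $\pi_{n*}$ on $\ker U_-\cap\mathrm{Ran}\,\Pi^X_{\lambda_0}$ follows from the fact that Ruelle resonant states have wavefront contained in the unstable dual $E_u^*$, which is transverse to fibres of $\pi_n$; combined with $U_-u=0$ this yields a unique-continuation argument along horocycles, exactly as in the compact case.

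For Part~1 the new input is surjectivity of $\pi_{0*}$. Given $v\in \mathrm{Ran}\,\Pi^\Delta_{\lambda_0+1}$, I would extract a $\Gamma$-equivariant boundary distribution on $\pl\hh^2$ through the meromorphic boundary-value theory associated with the Mazzeo--Melrose resolvent, and then apply a Poisson-type operator $\mc{Q}_\lambda$---obtained by integration against an appropriate power of the Busemann function---to produce a distribution on $S\hh^2$ which descends to a $\Gamma$-invariant distribution $u$ on $SM$, automatically in $\ker U_-$. The main task is to verify that $u$ actually lies in $\mathrm{Ran}\,\Pi^X_{\lambda_0}$, i.e.\ belongs to the Dyatlov--Guillarmou anisotropic Sobolev space; this should follow from the boundary distribution being supported on the limit set $\Lambda_\Gamma$, which is exactly the projection of the trapped set. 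A direct computation then gives $\pi_{0*}u=v$. The vanishing assertion at $\lambda_0\in -\demi-\nn$ is a consequence of the Guillopé--Zworski absence of quantum resonances at those points combined with the injectivity already established.

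For Part~2, at $\lambda_0=-n$ an element $u\in \ker U_-\cap \mathrm{Ran}\,\Pi^X_{-n}$ maps under $\pi_{n*}$ to a smooth $n$-differential on $M$ satisfying $\bbar{\pl}\pi_{n*}u=0$, since $U_-$ acts as $\bbar{\pl}_{\mc{K}^n}$ on the $n$-th fibre Fourier mode. That $\pi_{n*}u$ extends holomorphically to the double $M_2$ follows from the fact that, in the Poisson-lift construction, $u$ is built from a boundary distribution supported in $\Lambda_\Gamma$; away from $\Lambda_\Gamma$ the lift is manifestly holomorphic, so the $n$-differential extends through $\pl\bbar{M}$ into $M_-$. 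The reality of $u$ as a distribution on $SM$ translates, under the doubling, into the symmetry $\mc{I}^*f=\bar f$. Surjectivity onto $H_n(M)$ is obtained by running the same Poisson lift in reverse, and the dimension count then follows from Riemann--Roch for $\mc{K}^n$ on $M_2$ (which satisfies $\chi(M_2)=2\chi(M)$) restricted to the $\mc{I}$-equivariant subspace, giving $(2n-1)|\chi(M)|$ for $n\geq 2$ and $|\chi(M)|+1$ for $n=1$, the extra contribution coming from $\mc{I}$-invariant constants on $M_2$. The cyclic case must be excluded because then $\Lambda_\Gamma$ consists of only two points and the doubling construction degenerates.

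The main obstacle will be the surjectivity step in Part~1: constructing $\mc{Q}_\lambda$ with the correct meromorphic structure in $\lambda$, and proving that it produces a distribution with the anisotropic wavefront and decay properties needed to lie in the range of $\Pi^X_{\lambda_0}$. Unlike the compact case, where $L^2$ theory supplies this for free, one must here track the behaviour at the conformal boundary $\pl\bbar{M}$ carefully and match it against the asymptotic expansions characterising quantum resonant states in the sense of Mazzeo--Melrose scattering theory, verifying in particular that no spurious poles are introduced except at the integer points treated in Part~2 and at $-\demi-\nn$ where both sides vanish.
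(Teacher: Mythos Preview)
Your overall architecture matches the paper's: lift to $S\hh^2$, associate to each resonant state in $\ker U_-$ a distribution $\omega\in\mc{D}'(\mathbb{S}^1)$ via $\mc{Q}_-$, observe that the support condition $\supp u\subset\Lambda_+$ translates into $\supp\omega\subset\Lambda_\Gamma$, and run the Poisson--Helgason transform in both directions. Your identification of the main analytic obstacle---showing that the reconstructed distribution lies in the anisotropic space, which the paper handles by proving the characterization ${\rm Ran}(\Pi^\Delta_{s_0})=\{\varphi:(\Delta_M-s_0(1-s_0))^j\varphi=0,\ \varphi\in\bigoplus_k\rho^{s_0}(\log\rho)^kC^\infty_{\rm ev}(\bbar{M})\}$ (Proposition~\ref{caractreson})---is accurate.

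There is, however, a genuine gap in your treatment of Part~2. Your claim that ``$U_-$ acts as $\bbar{\pl}_{\mc{K}^n}$ on the $n$-th fibre Fourier mode'' is not correct as stated: from the recursion of Lemma~\ref{recurrence_relation}, the equation $\eta_-u_n=0$ (which is what gives holomorphy of $u_n$) is equivalent to $(2n-1)u_{n-1}=0$, and in general you only know $u_{n-1}=0$ once you know $u_0=0$. So you must first prove that ${\pi_0}_*u=u_0$ vanishes for every $u\in V_0^1(-n)$. This is a nontrivial step and is precisely where the non-cyclic hypothesis enters. The paper argues as follows: any $u_0$ in the image of $\mc{P}_{-n}$ has lift $(1-|z|^2)^{1-n}L_n(z,\bar z)$ with $L_n$ a polynomial of degree $2n-2$; $\Gamma$-invariance forces $L_n$ to vanish at every fixed point of every $\gamma\in\Gamma$, hence on $\Lambda_\Gamma$; when $\Gamma$ is non-elementary $\Lambda_\Gamma$ is infinite, so by analyticity $L_n|_{\mathbb{S}^1}\equiv 0$, which upgrades the asymptotics of $u_0$ to $\rho^nC^\infty(\bbar{M})\subset L^2$ and forces $u_0=0$ since $n(1-n)\le 0$. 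Your stated reason for excluding cyclic $\Gamma$ (``the doubling construction degenerates'') is not the operative one: for cyclic $\Gamma$ the limit set has only two points, the polynomial $L_n$ need not vanish identically, and indeed $u_0$ can be nonzero---this is worked out explicitly in the paper after Theorem~\ref{classicquantic}.

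A smaller point: your injectivity sketch via wavefront transversality and horocyclic unique continuation is not what the paper does. The paper simply observes that ${\pi_0}_*u=\mc{P}_{\la_0}(\omega)$ and invokes injectivity of $\mc{P}_{\la_0}$ for $\la_0\notin-\nn$ directly; this is cleaner and avoids any microlocal argument at that stage.
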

As in the compact case we provide more precise information on the correspondence between Jordan blocks, see Theorem \ref{classicquantic}. In 2), we note that ${\pi_0}_*({\rm Ran}(\Pi_{-n}^X))=0$ for non-cyclic groups, ${\rm Ran}(\Pi_{-n+1}^\Delta)=0$ for $n>1$ and 
${\rm Ran}(\Pi_{0}^\Delta)$ is the space of constant functions (see the comments after Corollary \ref{selberg}).
In the case of a cyclic group, the classical resonances at $-n$ have dimension $2$ and no Jordan blocks, and we get that 
${\pi_0}_*: {\rm Ran}(\Pi_{-n}^X)\mapsto {\rm Ran}(\Pi_{-n+1}^\Delta)$ 
has a kernel of dimension $1$ and image of dimension $1$, while ${\rm Ran}(\Pi_{-n+1}^\Delta)$ has dimension $2$ and there is a Jordan block for the quantum resonance. The map ${\pi_n}_*: {\rm Ran}(\Pi_{-n}^X)\cap \ker U_-\cap \ker{\pi_0}_*\to H_{n}(M)$ is one-to-one in that case.

Beyond the description of the first band of Ruelle resonances we obtain a description
of the full spectrum of Ruelle resonances by applying the vector fields $U_+$ iteratively.
\begin{prop}\label{thm:higher_bands}
 Let $M$ be a compact or convex co-compact hyperbolic surface, then
 \[
  {\rm Ran}(\Pi_{\la_0}^X) = \bigoplus_{0\leq \ell\leq  |{\rm Re}(\lambda_0)|} U_+^\ell \Big({\rm Ran}(\Pi_{\la_0+\ell}^X) \cap \ker U_-\Big)
 \]
 and the map $U_+^\ell :{\rm Ran}(\Pi_{\la_0+\ell}^X) \cap \ker U_- \to {\rm Ran}(\Pi_{\la_0}^X)$
 is injective unless $\lambda_0 +\ell =0$.
\end{prop}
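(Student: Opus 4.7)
My plan is to exploit the $\mathfrak{sl}_2(\rr)$-triple structure carried by $(X, U_+, U_-)$: viewing $SM=\Gamma\backslash \PSL_2(\rr)$, these are left-invariant vector fields obeying $[X,U_\pm]=\pm U_\pm$ (given) together with $[U_-,U_+]=-2X$. From $[X,U_-]=-U_-$ one checks that $U_-$ sends ${\rm Ran}(\Pi^X_{\la_0})$ into ${\rm Ran}(\Pi^X_{\la_0+1})$, raising the generalized $(-X)$-eigenvalue by one, while $U_+$ lowers it. Since Ruelle resonances of $-X$ are confined to $\{{\rm Re}(\la)\le 0\}$, the target space ${\rm Ran}(\Pi^X_{\la_0+k})$ vanishes for $k>|{\rm Re}(\la_0)|$, forcing $U_-^k u=0$ for any $u\in {\rm Ran}(\Pi^X_{\la_0})$ and such $k$. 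This accounts for the finite range of indices in the decomposition.

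The central computation is the commutation identity
\[
U_- U_+^\ell \;=\; U_+^\ell U_- + \ell\, U_+^{\ell-1}(-2X-\ell+1),
\]
proved by induction on $\ell$ from $[U_-,U_+]=-2X$ and $[X,U_+]=U_+$. Because $[X,U_-]=-U_-$ makes $\ker U_-$ invariant under $X$, iterating on $\ker U_-$ gives
\[
U_-^\ell U_+^\ell\big|_{\ker U_-} \;=\; P_\ell(-X)\big|_{\ker U_-}, \qquad P_\ell(z):=\ell!\prod_{j=0}^{\ell-1}(2z-j).
\]
Since $-X$ acts on ${\rm Ran}(\Pi^X_{\la_0+\ell})$ as $(\la_0+\ell)\,{\rm Id}+N$ with $N$ nilpotent, the restriction of $P_\ell(-X)$ to $\ker U_-\cap{\rm Ran}(\Pi^X_{\la_0+\ell})$ is a nilpotent perturbation of the scalar $P_\ell(\la_0+\ell)$ and is invertible precisely when $P_\ell(\la_0+\ell)\neq 0$. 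The roots of $P_\ell$ are $\{0,\tfrac12,1,\dots,\tfrac{\ell-1}{2}\}\subset[0,\infty)$; combined with ${\rm Re}(\la_0+\ell)\le 0$, the only obstruction is $\la_0+\ell=0$, which yields the injectivity statement.

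Directness follows by applying $U_-^L$ to any relation $\sum_{\ell=0}^L U_+^\ell v_\ell=0$: the intertwining $U_-^j(-X)=(-X-j)U_-^j$ combined with $v_\ell\in\ker U_-$ kills all terms with $\ell<L$, leaving $P_L(-X)v_L=0$ and hence $v_L=0$ when $\la_0+L\neq 0$; iterate downward. For surjectivity I induct on the height $k(u):=\max\{k\ge 0:U_-^k u\neq 0\}$: given $u$ with $k(u)=k>0$, the vector $U_-^k u$ lies in $\ker U_-\cap{\rm Ran}(\Pi^X_{\la_0+k})$, so inverting $P_k(-X)$ produces $w$ with $U_-^k(u-U_+^k w)=0$, and then $u-U_+^k w$ has strictly smaller height and the inductive hypothesis applies. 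The main obstacle is the case $\la_0+\ell=0$, where $P_\ell(-X)$ fails to be invertible; to handle it I would invoke the explicit description of $\ker U_-\cap{\rm Ran}(\Pi^X_0)$ from Theorems \ref{compactcase} and \ref{convexcocompactcase}---in the compact case this space is spanned by the constant function on $SM$, which lies in $\ker U_+$, so the resulting non-injectivity is concentrated in the ``constant direction'' and does not break the direct sum of \emph{images}---together with a parallel analysis in the convex co-compact setting using the structure of resonant states at $\la=0$ furnished by those theorems.
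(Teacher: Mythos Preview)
Your approach is essentially the paper's (Corollaries~\ref{otherbands} and~\ref{otherbands2}): both compute $U_-^\ell U_+^\ell$ on $\ker U_-$ via the $\mathfrak{sl}_2$ relations to obtain the polynomial $\ell!\prod_{j=0}^{\ell-1}(-2X-j)$, then use its invertibility on generalized eigenspaces (scalar plus nilpotent) together with the constraint ${\rm Re}(\la_0+\ell)\le 0$ to get injectivity of $U_+^\ell$ and the filtration $V_\ell^j(\la_0)=U_+^\ell(V_0^j(\la_0+\ell))\oplus V_{\ell-1}^j(\la_0)$.

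The one genuine gap is your treatment of surjectivity at $\la_0=-n\in -\nn$ in the compact case. Observing that constants lie in $\ker U_+$ correctly shows that the $\ell=n$ summand $U_+^n({\rm Ran}(\Pi^X_0)\cap\ker U_-)$ vanishes, and hence that directness of the images is unaffected. But your inductive surjectivity argument breaks precisely here: if some $u\in{\rm Ran}(\Pi^X_{-n})$ had height $k(u)=n$, you could not invert $P_n(-X)$ to peel off a $U_+^n w$ term, and nothing you have written rules this out. The paper closes this gap with a short $L^2$ argument (discussion following Corollary~\ref{otherbands}): if $w:=U_-^n u$ is a nonzero constant, then since $U_-$ is skew-adjoint for the Liouville measure,
\[
\|w\|_{L^2}^2=\langle U_-^n u, w\rangle=(-1)^n\langle u, U_-^n w\rangle=0,
\]
a contradiction; hence $V_n^j(-n)=V_{n-1}^j(-n)$ and the induction never reaches height $n$. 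In the convex co-compact case no separate argument is needed because ${\rm Ran}(\Pi^X_0)\cap\ker U_-=0$ (there is no quantum resonance at $s_0=1$), which is why Corollary~\ref{otherbands2} has no exceptional clause.
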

The case $\lambda_0 +\ell =0$ can 
 only occur if $M$ is compact, in which case 
 ${\rm Ran}(\Pi_{0}^X)$ is the space of constant functions, killed by the differential 
 operator $U_+$.

As a consequence, in Section \ref{sec:zeta_functions}, we obtain an alternative proof  
of the results \cite{PaPe,BJP} on the zeros of the Selberg zeta function in our situation.

We end the introduction by a rough outline of the proofs of Theorem \ref{compactcase}
and \ref{convexcocompactcase}: a central ingredient is the microlocal 
characterization of
Ruelle resonant states in \cite{FaSj,DyGu}. In these references it has
been shown that a distribution $u\in \mathcal D'(SM)$ is a generalized
Ruelle resonant state for a Ruelle resonance $\la_0 \in \cc$ (i.e. $u\in {\rm Ran}\, \Pi^X_{\la_0}$)
if and only if there exists $j\geq 1$ such that $(-X-\la_0)^ju=0$ and
\[
\begin{split}
{\rm WF}(u)\subset E_u^* &\quad  \textrm{ if } M \textrm{ is compact}\\
{\rm WF}(u)\subset E_u^*,\,\, {\rm supp}(u)\subset \Lambda_+ &  
\quad  \textrm{ if } M \textrm{ is convex co-compact},
\end{split}
\]  
Here ${\rm WF}(u)\subset T^*(SM)$ denotes the wave-front set of $u$ and 
$E_u^*\subset T^*(SM)$ is the subbundle defined by $E_u^*(\rr X\oplus E_u)=0$.
Furthermore, on convex co-compact surfaces we use the notation
\[
\Lambda_\pm:=\{ y\in SM;  d(\pi_0(\varphi_{t}(y)),x_0)\not\to \infty \textrm{ as }t\to \mp\infty \}
\]
where $d$ denotes the Riemannian distance and $x_0\in M$ is any fixed point. Note that the set $\Lambda_+$ (resp. $\Lambda_-$) has a clear dynamical 
interpretation as it corresponds to trajectories that do not escape to infinity 
in the past (resp. in the future).

Using this characterization we follow the general strategy of \cite{DFG}. We
consider the hyperbolic surface as a quotient $M=\Gamma\backslash\hh^2$ 
of its universal cover $\hh^2$ by a co-compact, respectively convex co-compact, 
discrete subgroup $\Gamma\subset {\rm PSL}_2(\rr)$. If we lift the horocyclic invariant 
Ruelle resonant states to $\hh^2$, we can relate them to distributions on 
$\mathbb{S}^1=\pl\bbar{\hh}^2$, conformally covariant by the 
group $\Gamma$ and supported in $\Lambda_\Gamma$.  
We then show that such distributions are in correspondence with quantum resonant 
states using the bijectivity of the Poisson-Helgason transform at the 
noninteger points.
While this step is straightforward
for compact surfaces, the convex co-compact setting is more complicated. A central ingredient is a characterization 
of generalized quantum resonant states using their asymptotic behavior towards the boundary.
One can show (see Proposition \ref{caractreson}) that $u\in {\rm Ran}\, \Pi^\Delta_{\la_0}$ 
if and only if there exists $j\geq 1$ such that $(\Delta-\la_0(1-\la_0))^ju=0$ and
\begin{equation}\label{eq:quantum_res_cond}
\begin{split}
u\in C^\infty(M) &\quad  \textrm{ if } M \textrm{ is compact},\\
u\in \bigoplus_{k=0}^{j-1}\rho^{\la_0}\log(\rho)^kC_{\rm ev}^\infty(\bbar{M}) &  
\quad  \textrm{ if } M \textrm{ is convex co-compact},
\end{split}
\end{equation}
where $C_{\rm ev}^\infty(\bbar{M})$ denotes the space of smooth functions on 
$\bbar{M}$ which extend smoothly to $M_2$ as even functions with respect to the 
involution $\mc{I}$, and
$\rho\in C^\infty_{\rm ev}(\bbar{M})$ is a boundary defining function of 
$\pl\bbar{M}$ in $\bbar{M}$. We prove that the asymptotic condition \eqref{eq:quantum_res_cond}
corresponds to the fact that the associated distribution $\omega\in \mathcal D'(\mathbb{S}^1)$ via the Poisson-Helgason transform is supported in the limit set $\Lambda_\Gamma\subset \mathbb{S}^1$. Analogously, the condition that a horocyclic invariant
Ruelle resonant state is supported in $\Lambda_+$ is equivalent
to the fact that its associated distribution $\omega\in \mathcal D'(\mathbb{S}^1)$ is 
again supported on the limit set.

For the negative integer points, the Poisson-Helgason transform fails to be 
bijective, thus there might be equivariant boundary distributions
$\omega\in\mathcal D'(\mathbb S^1)$ and consequently Ruelle resonances 
that are not related to Laplace eigenfunctions and lie in the 
kernel of the Poisson-Helgason transform. In order to characterize these 
distributions we consider a vector valued Poisson-Helgason transform, 
whose image is contained in the sections of the complex line bundle $\mathcal K^n$. Roughly 
speaking we prove that a combination of these vector valued Poisson
transformation yields an equivariant bijection between the kernel
of the Poisson-Helgason transform at the negative integer points and the 
holomorphic respectively antiholomorphic sections in $\mathcal K^n$ and
$\mathcal K^{-n}$. Again the convex co-compact part is more complicated, 
as we also have to take care of the asymptotics at 
the boundary.

Note that the invariant distributions supported on the limit set which appear
as an intermediate step in our proof were also studied in 
\cite{BuOl,BuOl2}, and our result somehow completes the picture. 

\textbf{Acknowledgements.} C.G. is partially supported by ANR-13-BS01-0007-01 and ANR-13-JS01-0006. 
J.H. and T.W. acknowledge financial support by the DFG grant DFG HI-412-12/1. 
We thank S. Dyatlov, F.Faure and M. Zworski for useful discussions, and 
Viet for suggesting the title.
 
\section{Geodesic flow on hyperbolic manifolds}
In this Section, we recall a few facts about the geodesic flow, horocyclic 
derivatives and the Poisson operator on the real hyperbolic plane that are needed for the next sections. 
We refer to the paper \cite{DFG} where all the material is described in full detail. 

\subsection{Hyperbolic space}\label{hyperbolicspace}
Let $\hh^{2}$ be the real hyperbolic space of dimension $2$, which we view as the open 
unit ball in $\rr^{2}$ equipped with the metric $g_{\hh^2}:=\frac{4|dx|^2}{(1-|x|^2)^2}$. 
The unit tangent bundle
is denoted by $S\hh^{2}$  and the projection is denoted by $\pi_0: S\hh^{2}\to \hh^{2}$ on the base. 
The hyperbolic space $\hh^{2}$ is compactified smoothly into the closed unit ball of 
$\rr^{2}$, denoted by $\bbar{\hh}^{2}$ and its boundary is the unit sphere $\mathbb{S}^1$. 
 
Let $X$ be the geodesic vector field on $S\hh^{2}$ and $\varphi_t:S\hh^{2}\to S\hh^{2}$ be 
the geodesic flow at time $t\in\rr$.  We denote 
by $B_\pm : S\hh^{2}\to \mathbb{S}^1$ the endpoint maps assigning to a vector 
$(x,v)\in S\hh^{2}$ the endpoint on $\mathbb{S}^1$ of the geodesic passing through 
$(x,v)$ in positive time (+) and negative time (-). These maps are submersions and  
allow 
to identify $S\hh^{2}$ with $\hh^{2}\x S^1$ by the map $(x,v)\mapsto (x,B_\pm (x,v))$.
It is easy to compute $B_\pm$ explicitly: using the complex coordinate $z=x_1+ix_2\in \cc$ 
for the point $x=(x_1,x_2)$ (with $|x|<1$) and identifying $v\in S_z\hh^2$ with $e^{i\theta}$ through 
$2v/(1-|z|^2)=\cos(\theta)\pl_{x_1}+\sin(\theta)\pl_{x_2}$, 
we get 
\begin{equation}\label{formulaBpm}
B_-(z,e^{i\theta})=\frac{-e^{i\theta}+z}{-e^{i\theta}\bar{z}+1}, \quad 
B_+(z,e^{i\theta})=\frac{e^{i\theta}+z}{e^{i\theta}\bar{z}+1}.
\end{equation}
For each $z$, the map $B_z: e^{i\theta}\mapsto B_-(z,e^{i\theta})$ is a 
diffeomorphism of $\mathbb{S}^1$
and its inverse is given by 
\begin{equation}\label{formulaBinv}
B_z^{-1}(e^{i\alpha})=-e^{i\alpha}\frac{ze^{-i\alpha}-1}{\bar{z}e^{i\alpha}-1}.
\end{equation}
There exists two positive functions $\Phi_\pm\in C^\infty(S\hh^{2})$ 
satisfying $X\Phi_\pm=\pm \Phi_{\pm}$, given by 
\begin{equation}\label{phipm} 
\Phi_\pm(x,v):= P(x,B_\pm (x,v))  
\end{equation}  
where $P(x,\nu)$ is the Poisson kernel given by 
\begin{equation}\label{poissonker} 
P(x,\nu):=\frac{1-|x|^2}{|x-\nu|^2}, \quad x\in \hh^{2}, \nu \in \mathbb{S}^1.
\end{equation}

The  group of orientation preserving isometries of $\hh^{2}$ is the group
\[
G:={\rm PSU}(1,1)\simeq {\rm PSL}_2(\rr)
\]
An element $\gamma\in G\subset {\rm PSL}_2(\cc)$ acts on $\cc$ by M\"obius 
transformations and preserves the unit ball $\hh^{2}$, and this action preserves 
also the closure $\bbar{\hh}^{2}$. Furthermore the $G$ action on $\hh^2$ lifts linearly
to an action on $T\hh^2$ and as the action on the base space $\hh^2$ is isometric
it can be restricted to $S\hh^2$.
By abuse of notation, for $\gamma\in G$, we also denote 
the action of $\gamma$ on  $S\hh^2$ or  $\mathbb{S}^1$ by the same letter $\gamma$. 
By $|d\gamma|:\mathbb{S}^1\to\rr$ we denote the norm of 
the differential $d\gamma$ on the boundary $\mathbb{S}^1=\pl \bbar{\hh}^{2}$ of 
the unit ball with respect to the Euclidean norm.
Note that the above defined functions $\Phi_\pm$ and maps $B_\pm$ are compatible 
with respect to these $G$ actions in the sense that one has the relations
\begin{equation}\label{changeofPhi}
\gamma^*\Phi_\pm(x,v)=\Phi_{\pm}(x,v)N_\gamma(B_\pm(x,v)), \quad B_\pm(\gamma(x,v))=\gamma(B_\pm(x,v))\end{equation}  
where $N_\gamma(\nu):=|d\gamma(\nu)|^{-1}$.

As the $G$ action on $S\hh^2$ is free and transitive, we can identify $G\simeq S\hh^2$
via the natural isomorphism 
\begin{equation}\label{GSH^2}
 G\to S\hh^2 , \quad \gamma \mapsto (\gamma(0),\demi d\gamma(0).\pl_x).
\end{equation}
The Lie algebra $\mathfrak g =\mathfrak{sl}_2(\rr)$ of $G$ is spanned by 
\begin{equation}\label{basis} 
U_+:=\begin{pmatrix} 0&1\\0&0\end{pmatrix},\quad
U_-:=\begin{pmatrix} 0&0\\1&0\end{pmatrix}, \quad X:=\begin{pmatrix} {1\over 2}&0\\0&-{1\over 2}\end{pmatrix}.\end{equation}
These elements can also be viewed as left invariant smooth vector fields on 
$G\simeq S\hh^2$, which 
form at any point $(x,v)\in S\hh^2$ a basis of $T(S\hh^2)$, and the following 
commutation relations hold 
\begin{equation}\label{commutation}
[X,U_\pm]= \pm U_\pm , \quad [U_+,U_-]=2X.
\end{equation}
The geodesic vector field is represented by $X$ and we call $U_+$ the stable 
derivative and $U_-$ the unstable derivative. The vector fields  $X,U_\pm$ can 
be viewed as first order linear differential operators on $S\hh^2$, thus acting 
on distributions, and by \eqref{commutation}, $X$ preserves $\ker U_\pm$. 
Another decomposition that is quite natural for $T(S\hh^2)$ is 
\[ X:=\begin{pmatrix} {1\over 2}&0\\0&-{1\over 2}\end{pmatrix}, \quad
X_\perp:=\begin{pmatrix} 0&{1\over 2}\\{1\over 2}&0\end{pmatrix}, \quad 
V:=\begin{pmatrix} 0& {1\over 2}\\ -{1\over 2}& 0\end{pmatrix}.\]
which satisfy $U_\pm=X\pm X_\perp$ and 
\begin{equation}\label{commutation2}
[X,V]=X_\perp , \quad [X,X_\perp]=V, \quad [V,X_\perp]= X.
\end{equation}
The vector field $V$ generates the ${\rm SO}(2)$ action on $G$ and 
geometrically, it generates the 
rotation in the fibers of $S\hh^2$ (that are circles); it is called the vertical 
vector field since $d\pi_0(V)=0$.
For what follows, we will always view $X,V,X_\perp, U_+,U_-$ as vector fields on $S\hh^2$.

There is a smooth splitting of $T(S\hh^{2})$ into flow, stable and unstable bundles, 
\[
T(S\hh^2)= \rr X\oplus E_s\oplus E_u\]
with the property that there is $C>0$ uniform such that 
\begin{equation}\label{stableunstable} 
||d\varphi_t(y).w||\leq Ce^{-|t|}||w||, \;\; \forall t\geq 0 , \forall w\in E_s(y) \textrm{ or }\forall t\leq 0, \forall w\in E_u(y).
\end{equation}
Here the norm on $S\hh^{2}$ is with respect to the Sasaki metric.
The space $E_s$ is generated by the vector field $U_+$ and $E_u$ by the vector field $U_-$ where 
$U_\pm$ are the images by the map \eqref{GSH^2}  of the left invariant vector fields  
in \eqref{basis}.

There are two  important properties of $\Phi_\pm$ with respect to stable/unstable derivatives:
\begin{equation}\label{Upmkills}
U_\pm \Phi_{\pm}=0 , \textrm{ and }
dB_{\pm}.U_\pm=0. \end{equation}
Let $\mc{Q}_\pm: \mc{D}'(\mathbb{S}^1)\to \mc{D}'(S\hh^{2})$ be the pull-back by $B_\pm$ 
acting on distributions which is well defined since $B_\pm$ are submersions.
It is a linear isomorphism between the following spaces (see \cite[Lemma 4.7]{DFG})
\begin{equation}\label{Qpmiso}
\mc{Q}_\pm : \mc{D}'(\mathbb{S}^1) \to \mc{D}'(S\hh^{2})\cap \ker U_\pm\cap \ker X.
\end{equation}

\subsection{Poisson-Helgason transform}
We say that a smooth function 
$f$ on $\hh^{2}$ is \emph{tempered} if there exists $C>0$ such that  
$|f(x)|\leq Ce^{Cd_{\hh^2}(x,0)}$ if 
$0$ is the center of $\hh^{2}$ (viewed as the unit disk) and $d_{\hh^2}(\cdot,\cdot)$ 
denotes the hyperbolic distance. 
Below, we view the space of distributions $\mc{D}'(\mathbb{S}^1)$ on $\mathbb{S}^1$ as the topological dual 
of $C^\infty(\mathbb{S}^1)$ and we embed $C^\infty(\mathbb{S}^1)\subset \mc{D}'(\mathbb{S}^1)$ by the pairing
\[ \cjg \omega, \chi \cjd_{\mathbb{S}^1}:=\int_{\mathbb{S}^1} \omega(\nu)\chi(\nu)dS(\nu) \]
where the measure $dS$ is the standard measure on $\mathbb{S}^1$ (viewed as a the unit circle in $\rr^2$).  
Then the following result was proved\footnote{Note that this was proved in the setting of hyperfunctions by Helgason \cite{He}} in \cite[Corollary 11.3 and Theorem 12.2]{VdBSc} and \cite[Theorem 3.15]{OsSe} but we follow the presentation given in 
\cite[Section 6.3]{DFG}:
\begin{lemm}\label{poissoniso} 
For $\la\in \cc$, let $\mc{P}_\la: \mc{D}'(\mathbb{S}^1)\to C^\infty(\hh^{2})$ 
be the Poisson-Helgason transform
\[\mc{P}_\la( \omega)(x):={\pi_0}_* (\Phi_-^{\la}\mc{Q}_-(\omega))(x)=\cjg \omega, P^{1+\la}(x,\cdot)\cjd_{\mathbb{S}^1}
\]
where $P(x,\nu)$ is the Poisson kernel of \eqref{poissonker} and ${\pi_0}_*$ is the adjoint of the pull-back 
$\pi_0^*: C_c^\infty(\hh^{2})\to C_c^{\infty}(S\hh^{2})$.  Then $\mc{P}_\la$ maps 
$\mc{D}'(\mathbb{S}^1)$ onto the space of tempered functions in the kernel of 
$(\Delta_{\hh^2}+\la(1+\la))$, where $\Delta_{\hh^2}=d^*d$ is the positive Laplacian acting on functions on $\hh^{2}$ and if $\la\notin-\nn$, $\mc{P}_\la$ is an isomorphism.
Finally, if $\gamma\in G$ is an isometry of $\hh^{2}$, we have the relation
$\gamma^*\mc{P}_\la(\omega)=\mc{P}_\la(|d\gamma|^{-\la}\gamma^*\omega)$ for each $\omega\in \mc{D}'(\mathbb{S}^1)$.
\end{lemm}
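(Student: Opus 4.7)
The strategy is to reduce all assertions to concrete computations on $\hh^2$ and $\mathbb{S}^1$, and to cite the established Helgason surjectivity result for the deep analytic content.

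\emph{Step 1: the two integral formulas agree.} Unpacking $\mc{Q}_-(\omega)=B_-^*\omega$ and using \eqref{phipm}, on the fiber $S_x\hh^2$ one has
\[
\bigl(\Phi_-^\la \mc{Q}_-(\omega)\bigr)(x,v)=P(x,B_-(x,v))^\la\,\omega(B_-(x,v)).
\]
Applying ${\pi_0}_*$ is integration over $v\in S_x\hh^2\simeq \mathbb{S}^1$ against the natural measure $dS(v)$. Using the diffeomorphism $B_z:\mathbb{S}^1\to\mathbb{S}^1$ of \eqref{formulaBpm}-\eqref{formulaBinv}, a direct calculation of the Jacobian gives $dS(v)=P(x,\nu)\,dS(\nu)$ for $\nu=B_-(x,v)$; substituting produces $\cjg \omega,P^{1+\la}(x,\cdot)\cjd_{\mathbb{S}^1}$. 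This both identifies the two formulas and shows $\mc{P}_\la\omega\in C^\infty(\hh^2)$ by smoothness of $P$ in $x$.

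\emph{Step 2: eigenvalue equation and temperedness.} One verifies directly (e.g.\ in the ball model) that $\Delta_{\hh^2}P(\cdot,\nu)^{1+\la}=\la(1+\la)P(\cdot,\nu)^{1+\la}$; differentiating under the pairing with $\omega$ gives $(\Delta_{\hh^2}+\la(1+\la))\mc{P}_\la\omega=0$. Alternatively, using $X\Phi_-=-\Phi_-$, $U_-\Phi_-=0$, $U_-\mc{Q}_-(\omega)=X\mc{Q}_-(\omega)=0$ from \eqref{Upmkills}-\eqref{Qpmiso}, one sees that $\Phi_-^\la\mc{Q}_-(\omega)\in \ker U_-\cap\ker(X+\la)$; the well-known identity relating the Casimir of $\mathfrak{sl}_2$ to the base Laplacian then yields $(\Delta_{\hh^2}+\la(1+\la)){\pi_0}_*=0$ on this space. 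Temperedness follows from the pointwise bound $P(x,\nu)\leq C\,e^{d_{\hh^2}(x,0)}$, which gives $|\mc{P}_\la\omega(x)|\leq C_\omega e^{(1+|\Real\la|)d_{\hh^2}(x,0)}$.

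\emph{Step 3: equivariance.} From conformality of $\gamma\in G$ on $\mathbb{S}^1$ one obtains $P(\gamma x,\gamma\nu)=|d\gamma(\nu)|^{-1}P(x,\nu)$ (equivalently, \eqref{changeofPhi}). Performing the substitution $\nu=\gamma\mu$ in $\cjg\omega,P^{1+\la}(\gamma x,\cdot)\cjd_{\mathbb{S}^1}$ produces a factor $|d\gamma(\mu)|^{-(1+\la)}\cdot|d\gamma(\mu)|$ from the Jacobian, leaving $|d\gamma|^{-\la}$ acting on $\gamma^*\omega$, which is exactly the claimed formula.

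\emph{Step 4: surjectivity and injectivity for $\la\notin-\nn$.} This is the analytic heart of the lemma and is where the citation to \cite{VdBSc,OsSe} (and the presentation in \cite{DFG}) is used: every tempered solution $u$ of $(\Delta_{\hh^2}+\la(1+\la))u=0$ admits boundary values in $\mc{D}'(\mathbb{S}^1)$, and for $\la\notin-\nn$ the map sending $\omega$ to the ``$\Phi_-^\la$-coefficient'' of the boundary asymptotics of $\mc{P}_\la\omega$ is the identity, giving both surjectivity and injectivity. At $\la\in-\nn$ this normalization develops a zero (the $\Gamma$-factor in the Helgason normalization vanishes), which is precisely why the exceptional points appear in Theorems \ref{compactcase} and \ref{convexcocompactcase}. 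The main obstacle in a fully self-contained proof would be this step; since the paper quotes it, the actual workload for us is Steps 1--3, which are routine changes of variable together with the already established properties of $\Phi_\pm$, $B_\pm$, and the $G$-action recalled in Section \ref{hyperbolicspace}.
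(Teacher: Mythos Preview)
The paper does not actually prove this lemma: it is stated with attribution to \cite{VdBSc}, \cite{OsSe}, and the presentation in \cite[Section 6.3]{DFG}, with no argument given in the text. Your proposal therefore does more than the paper does, supplying the elementary verifications (agreement of the two formulas via the Jacobian $d\theta/d\alpha=P(x,\nu)$ of $B_z$, the eigenfunction property, temperedness, and the $G$-equivariance) while correctly deferring the surjectivity/injectivity to the cited sources. This is exactly the right division of labor, and your computations in Steps~1 and~3 are correct.

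One small slip: in Step~2 you write $\Delta_{\hh^2}P(\cdot,\nu)^{1+\la}=\la(1+\la)P(\cdot,\nu)^{1+\la}$, which is off by a sign; the correct identity is $\Delta_{\hh^2}P^{1+\la}=-\la(1+\la)P^{1+\la}$ (equivalently $s(1-s)$ with $s=1+\la$), and only then does $(\Delta_{\hh^2}+\la(1+\la))\mc{P}_\la\omega=0$ follow. This is a typo, not a gap.
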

It is useful to describe the inverse of $\mc{P}_{\la}$ when $\la\notin -\nn$. 
For this purpose we can use for instance \cite[Lemma 6.8]{DFG}. 
First, if  $\la\notin -\nn$ and $\omega\in \mc{D}'(\mathbb{S}^1)$, for each
$\chi\in C^\infty(\mathbb{S}^1)$ and $t\in (0,1)$ one has\footnote{The case $\la=-1/2$ is not really studied in 
\cite[Lemma 6.8]{DFG} but the analysis done there for $\la\in -1/2+\nn$ applies as well for $\la=-1/2$
by using the explicit expression of the modified Bessel function $K_{0}(z)$ as a converging series.} 
\begin{equation}\label{weakasymptotic0} 
\int_{\mathbb{S}^1} \mc{P}_{\la}(\omega)(\tfrac{2-t}{2+t}\nu)\chi(\nu)dS(\nu)=
\left \{\begin{array}{ll}
t^{-\la}F_{\la}^-(t)+t^{\la+1}F_{\la}^+(t) & \textrm{if } \la \notin -1/2+\zz\\ 
t^{-\la}F_{\la}^-(t)+t^{\la+1}\log(t)F_{\la}^+(t) & \textrm{if } \la\in -1/2+\nn_0\\
t^{-\la}\log(t)F_{\la}^-(t)+t^{\la+1}F_{\la}^+(t) & \textrm{if } \la\in -1/2-\nn
\end{array}\right.  
\end{equation}
where $F_\la^\pm\in C_{\rm ev}^\infty([0,1))$, and $C_{\rm ev}^\infty([0,1))$ is the subset of 
$C^\infty([0,1))$ consisting of functions with an even Taylor expansion at $0$\footnote{The evenness of the expansion at $t=0$ comes directly from the proof in \cite[Lemma 6.8]{DFG} when acting on functions, since 
the special functions appearing in the argument are Bessel functions that have even expansions.}. 
The exact expressions of $F_\la^\pm (0)$ can be obtained directly  from the study of the Poisson operator in \cite{GrZw} and the computation of the scattering operator $\mc{S}(s)$ of $\hh^{2}$ in \cite[Appendix]{GuZw}. 
The scattering operator is defined as the operator acting on $C^\infty(\mathbb{S}^1)$ given by the explicit function of the Laplacian on $\mathbb{S}^1$ 
\begin{equation}\label{formulaS(s)} 
\mc{S}(s):=\frac{\Gamma\Big(\sqrt{\Delta_{\mathbb{S}^1}}+s\Big)}{\Gamma\Big(\sqrt{\Delta_{\mathbb{S}^1}}+1-s\Big)},
\end{equation}
with Schwartz kernel on $\mathbb{S}^1$ given for ${\rm Re}(s)<1/2$ by 
\[ \mc{S}(s; \nu,\nu')=\pi^{-\demi}\frac{2^s\Gamma(s)}{\Gamma(-s+\demi)}|\nu-\nu'|^{-2s}.\]
It is a holomorphic family of operators in $s\notin -\nn_0$ with poles of order $1$ at $-\nn_0$, which is an isomorphism on $\mc{D}'(\mathbb{S}^1)$ outside the poles and satisfies the following functional equations 
\begin{equation}\label{fcteq} 
\mc{S}(s)^{-1}=\mc{S}(1-s), \quad \mc{P}_{\la}=\frac{\Gamma(-\la)}{\Gamma(\la+1)}\mc{P}_{-\la-1}\mc{S}(\la+1).
\end{equation}
The operator $\mc{S}(s)$ is an elliptic pseudo-differential operator of (complex) 
order $2s-1$ on $\mathbb{S}^1$, with principal symbol that of $\Delta_{\mathbb{S}^1}^{2s-1}$. 
This follows from the formula above but also in a more general setting by the 
works \cite{JoSa,GrZw}. 
We remark that for $k\in\nn$, the operator $\mc{S}(1/2+k)$ is a differential 
operator of order $2k$ (matching with the analysis of \cite{GrZw}), and it is invertible from the expression \eqref{formulaS(s)}.

We get for  $\la\notin (-\demi-\zz)\cup -\nn$
\begin{equation}\label{Flapm} 
\begin{split}
F_\la^-(0)=\pi^{1/2}2^{\la}\frac{\Gamma(\la+\demi)}{\Gamma(\la+1)}
\cjg \omega, \chi\cjd_{\mathbb{S}^1}, \quad
 F_\la^+(0)=\pi^{1/2}2^{-\la-1}\frac{\Gamma(-\la-\demi)}{\Gamma(\la+1)}\cjg \mc{S}(\la+1)\omega,\chi\cjd_{\mathbb{S}^1}.
 \end{split}\end{equation}
At $\la=-1/2+k$ with $k\in\nn$ 
there is a pole of order $1$ in the expression of $F_\la^+(0)$ and it follows from 
\cite{GrZw} that, with the notation of \eqref{weakasymptotic0},  
\[
F_{-1/2+k}^-(0)=\frac{\pi^{1/2}2^{-1/2+k}\Gamma(k)}{\Gamma(k+1/2)}
\cjg \omega, \chi\cjd_{\mathbb{S}^1}, \quad F_{-1/2+k}^+(0)=c_k\cjg\mc{S}(1/2+k)\omega,\chi\cjd
\] 
for some $c_k\not=0$.
For $\la=-1/2$, $F_\la^+(0)$, there is a constant $c_0\in\rr$ so that
\begin{equation}\label{Sen1/2} 
F_{-1/2}^+(0)=-\sqrt{2}\cjg \omega,\chi\cjd, \quad F_{-1/2}^-(0)=\sqrt{2}\cjg \pl_\la\mc{S}(1/2)\omega,\chi\cjd
+c_0\cjg \omega,\chi\cjd\end{equation}
and $\pl_\la\mc{S}(1/2)=\log(\Delta_{\mathbb{S}^1})+A$ for some pseudo-differential operator $A$ of order $0$ on $\mathbb{S}^1$.
To deal with the case $\la\in -\demi-\nn$, we use the functional equation \eqref{fcteq}
of the scattering operator of $\hh^{2}$: we deduce that for $\la=-1/2-k$ with $k\in\nn$, 
\begin{equation}\label{-1/2-kbis}
F^-_{-1/2-k}(0)=c'_k\cjg \omega,\chi\cjd_{\mathbb{S}^1}, \quad F^+_{-1/2-k}(0)=c''_k\cjg \mc{S}(1/2-k)\omega,\chi\cjd\end{equation} 
for some $c'_k\not=0$ and $c_k''\not=0$. This gives the expression for the inverse of $\mc{P}_{\la}$ at those points. 

To conclude, we discuss the range and kernel of $\mc{P}_{-n}$ if $n\in\nn$. Using the complex coordinate 
$z\in\cc$ for the ball model of $\hh^2$, this operator is 
\[\begin{split} 
\mc{P}_{-n}(\omega) (z)= &(1-|z|^2)^{1-n}\int_{\mathbb{S}^1}\omega(\nu)|z\bar{\nu}-1|^{2(n-1)}d\nu\\
=& (1-|z|^2)^{1-n}\int_{0}^{2\pi}\omega(e^{i\alpha})(|z|^2+1-\bar{z}e^{i\alpha}-ze^{-i\alpha})^{(n-1)}d\alpha.
\end{split}\]
From this we deduce that the range of  $\mc{P}_{-n}$ is finite and its kernel contains the space
\[W_n:=\{\omega\in \mc{D}'(\mathbb{S}^1); \cjg \omega, e^{ik\alpha}\cjd=0, \forall k \in\zz\cap[-n+1,n-1]\}.\]
In fact, from the second functional equation \eqref{fcteq} and the formula \eqref{formulaS(s)}, we see that 
\begin{equation}\label{kerP-n}
\ker \mc{P}_{-n}=\ker( {\rm Res}_{1-n}\mc{S}(\la))=W_n
\end{equation}

\subsection{Co-compact and convex co-compact quotients} 
\label{sec:quotients}

Below, we will consider two types of hyperbolic surfaces, the compact and the 
convex co-compact ones. Consider a discrete subgroup $\Gamma\subset G$ containing 
only hyperbolic transformations, i.e. transformations fixing two points in 
$\bbar{\hh}^2$. The group $\Gamma$ acts properly discontinuously 
on $\hh^2$ and the quotient $M=\Gamma\backslash \hh^2$ is a smooth oriented 
hyperbolic surface. We say that $\Gamma$ is \emph{co-compact} if $M$ is compact.

Denote by $\Lambda_\Gamma\subset \mathbb{S}^1$ 
the limit set of the group $\Gamma$, i.e. the set of accumulation points of the 
orbit $\Gamma.0\in \hh^{2}$ of $0\in \hh^2$ on $\mathbb{S}^1=\pl\hh^{2}$. 
We will call $\Omega_\Gamma=\mathbb{S}^1\setminus \Lambda_\Gamma$ the set of 
discontinuity of $\Gamma$, on which $\Gamma$ acts properly discontinuously.  

If $\Gamma$ is co-compact, then $\Lambda_\Gamma=\mathbb{S}^1$.
The subgroup $\Gamma$ is called \emph{convex co-compact}, 
if it is not co-compact and it the action of $\Gamma$
on the convex hull ${\rm CH}(\Lambda_\Gamma)\subset\hh^2$ of the limit set 
$\Lambda_\Gamma$ in $\bbar{\hh}^{2}$ is co-compact, that is 
$\Gamma\backslash {\rm CH}(\Lambda_\Gamma)$ is compact (see e.g. 
\cite[Section 2.4]{Bo}). 
In this case the group $\Gamma$ acts totally discontinuously, freely, on 
$\hh^{2}$ and more generally on $\hh^{2}\cup \Omega_\Gamma=\bbar{\hh}^{2}\setminus \Lambda_\Gamma$.
The manifold $M=\Gamma\backslash \hh^{2}$ is complete with infinite volume, 
and it is the interior of a smooth compact manifold with boundary 
$\bbar{M}:=\Gamma\backslash (\hh^{2}\cup \Omega_\Gamma)$. Here we notice that 
$\hh^{2}\cup \Omega_\Gamma$ is also a smooth manifold with boundary but it is non-compact. 
The boundary $\pl\bbar{M}:=\Gamma\backslash \Omega_\Gamma$ of $\bbar{M}$ is compact. 

We now consider $M=\Gamma\backslash \hh^2$ which is either compact or convex 
co-compact (here $M$ could be as well the whole $\hh^2$). 
The unit tangent bundle bundle of $M$ is  $SM=\Gamma\backslash S\hh^{2}\simeq \Gamma\backslash G$, 
and we let $\pi_{\Gamma}: S\hh^{2}\to SM$ be the induced covering map. The 
geodesic flow $\varphi_t:SM\to SM$ on $SM$ lifts to the geodesic flow on $S\hh^2$, 
the left invariant vector fields $X,U_\pm,X_\perp,V$ on $T(S\hh^2)\simeq TG$ 
descend to $SM$ via $d\pi_\Gamma$; we will keep the notation $X$ instead of 
$d\pi_\Gamma.X$, and similarly for the vector fields $U_\pm,X_\perp,V$.
The flow $\varphi_t$ is generated by the vector field $X$ and there is an Anosov 
flow-invariant smooth splitting   
\begin{equation}\label{anosov} 
T(SM)=\rr X\oplus E_s\oplus E_u
\end{equation}
where $E_u=\rr U_-$, $E_s=\rr U_+$ are the stable and unstable bundles satisfying 
the condition \eqref{stableunstable}. Using the Anosov splitting \eqref{anosov},  
we define the subbundles $E_0^*$, $E_s^*$ and $E_u^*$  of $T^*(SM)$ by
\begin{equation}\label{dualspl}
E_u^*(E_u\oplus \rr X)=0 ,\quad E_s^*(E_s\oplus \rr X)=0, \quad E_0^*(E_u\oplus E_s)=0.
\end{equation}
\subsection{Complex line bundles}\label{fouriermodes}
Note that $M=\Gamma\backslash \hh^2$ carries a complex structure so that $\pi_\Gamma: \hh^2\to M$ is holomorphic and that we can thus
consider the complex line bundles $\mc{K}:=(T^*M)^{1,0}$ and $\mc{K}^{-1}:=(T^*M)^{0,1}$.
Let us consider their tensor powers: for each $k\in\zz$, set 
$\mc{K}^k:=\otimes^{|k|}\mc{K}^{{\rm sign}(k)}$.
The bundles $\mc{K}^k$  are holomorphic line bundles over $M$, which in addition 
are trivial when $M$ is convex co-compact (see \cite[Theorems 30.1 and Th 30.4]{Fo}). A section $C_c^\infty(M; \mc{K}^k)$ 
can be viewed as a function in $C_c^\infty(SM)$ by the map 
\[ 
\pi_k^*: C_c^\infty(M; \mc{K}^k)\to C_c^\infty(SM), \quad \pi_k^*u(x,v):=u(x)(\otimes^k v).
\]
We denote by ${\pi_k}_*: \mc{D}'(SM)\to \mc{D}'(M; \mc{K}^k)$ its transpose 
defined by duality. Note that the operator $\pi_k^*$ extends to $\mc{D}'(M; \mc{K}^k)$ 
and $(2\pi)^{-1}{\pi_k}_*\pi_k^*$ is the identity map on 
$\mc{D}'(M; \mc{K}^k)$.
Each smooth function $f\in C_c^\infty(SM)$ can be decomposed into Fourier modes in the fibers of 
$SM$ by using the eigenvectors of the vector field $V$:
\[ 
f=\sum_{k\in\zz} f_k\, \quad \textrm{ with }Vf_k=ikf_k \textrm{ and }f_k=\frac{1}{2\pi}\pi_k^*{\pi_k}_*f.
\]
It is easy to see that for each $f\in C^\infty_c(SM)$, $s\geq 0,$ and
$N>0$
\begin{equation}\label{borneuk} 
||f_k||_{H^s(SM)}\leq C_{f,N,s}\cjg k\cjd^{-N}
\end{equation}
for some constant $C_{f,N,s}$ independent of  $k$. A distribution $u\in \mc{D}'(SM)$ 
can also be decomposed as a sum 
\[ 
u=\sum_{k\in\zz} u_k\, \quad \textrm{ with }Vu_k=iku_k, \quad u_k=\frac{1}{2\pi}\pi_k^*{\pi_k}_*u
\]
which converges in the distribution sense. In order to see this recall that any 
distribution $u\in \mathcal D'(SM)$ restricted to a precompact open set $A\subset SM$ 
is of finite order, i.e.  there is $s>0$, $C>0$ with
\begin{equation}\label{finiteorder}
|\cjg u,\varphi\cjd| \leq C\|\varphi\|_{H^s(A)}, ~\forall~\varphi\in C^\infty_c(A).
\end{equation}
Now for $f\in C_c^\infty(SM)$, we can 
write $f=\sum_{k\in\zz} f_k$. Then (\ref{borneuk}) and (\ref{finiteorder})
imply that 
\[
\cjg u,f\cjd=\sum_{k\in\zz} \cjg u,f_k\cjd=\frac{1}{2\pi}\sum_{k\in\zz} \cjg \pi_k^*{\pi_k}_*u,f\cjd
\]
is absolutely convergent. For convenience of notations, we avoid the $\pi_k^*,{\pi_k}_*$ 
operators and we will view $u_k$ as an element in $\mc{D}'(M; \mc{K}^k)$ or as an element
in $\{w\in \mathcal D'(SM) \text{ with }Vw=ikw\}$ depending on which point of view
is more appropriate in a given situation.
First of all $V$ acts on $\mathcal D'(M,\mathcal K^k)$ by
\[
 V: \mathcal D'(M,\mathcal K^k)\to \mathcal D'(M,\mathcal K^k),~~u_k\mapsto V u_k=ik u_k.
\]
Furthermore, if we define the complex valued vector fields 
\begin{equation}\label{etapm}
\eta_\pm :=\demi(X\pm iX_\perp),
\end{equation}
they fulfill the commutation relations
\[
 [V,\eta_\pm]  = \pm\eta_\pm.
\]
They are called the raising/lowering operators as they shift the vertical Fourier components by $\pm 1$ and
when restricted to sections of $\mc{K}^k$ through $\pi_k^*$, they define operators 
\begin{equation}\label{etapmprop}
 \eta_\pm: \mc{D}'(M; \mc{K}^k)\mapsto \mc{D}'(M;\mc{K}^{k\pm 1}).
\end{equation}
If $z=x+iy$ are local isothermal 
coordinates, the hyperbolic metric can be written as $g=e^{2\alpha(z)}|dz|^2$
and for $k\geq 0$ the operators $\eta_\pm$ acting on a section $u\in C_c^\infty(M; \mc{K}^k)$ 
of the form $u=f(z)dz^k$  is given by 
\begin{equation}\label{formulaeta}
\eta_-u = e^{-2\alpha}(\pl_{\bar{z}}f)dz^{k-1}, \quad 
\eta_+u = (e^{2k\alpha}\pl_{z}(e^{-2k\alpha}f))dz^{k+1}.
\end{equation}
A similar expression holds for $k\leq 0$ and we directly deduce 
\begin{equation}\label{dbar}
\forall k\geq 0,\,\,  \eta_-u_k=0 \iff \bbar{\pl}u_k=0 , \quad \forall k<0,\,\,  \eta_+u_k=0\iff {\pl}u_k=0.
\end{equation}
We notice that these operators $\eta_\pm$, as well as the operators $V,X$ and $X_\perp=[X,V]$, have nothing to do with constant curvature and Lie groups, they are well defined for any oriented compact Riemannnian surface, see Guillemin-Kazhdan \cite{GuKa}. 
The \emph{Casimir operator}  is defined as the second order operator on $SM$ by 
\begin{equation}\label{Casimir} \begin{split}
\Omega=& X^2+X_\perp^2-V^2= X^2-V^2+(2u_-+V)^2=X^2+4U_-^2+2U_-V+2VU_-\\
=& X^2-X+4U_-+4VU_-.
\end{split}\end{equation}
It satisfies $(\Omega u)_k=\Omega u_k$ since $\Omega$ commutes with $V$.

For later purpose, we will need a few lemmas which follow for algebraic reasons 
and Fourier decomposition in the fibers. 
\begin{lemm}\label{recurrence_relation}
Let $\lambda\in \cc$ and $u,v\in \mathcal D'(SM)$ be two distributions, then 
they fulfill the set of differential equations
 \[
  (X+\lambda)u = v \text{ and }U_-u=0
 \]
if and only if their Fourier modes fulfill the recursion relations
\begin{equation}\label{recursion}
 2\eta_\pm u_{k\mp1} = (-\lambda \mp k)u_k + v_k.
\end{equation}
\end{lemm}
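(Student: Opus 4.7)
The plan is to project both differential equations onto each vertical Fourier mode and verify the equivalence mode by mode. Write $u = \sum_k u_k$ and $v = \sum_k v_k$ with $Vu_k = iku_k$; by the discussion in Subsection~\ref{fouriermodes} both series converge in $\mc{D}'(SM)$, so each of the equations $(X+\la)u=v$ and $U_-u=0$ is equivalent to the corresponding system of mode-wise identities. Recall also that $\eta_\pm$ shifts vertical Fourier modes by $\pm 1$, so the mode-$k$ component of $\eta_+u$ is $\eta_+u_{k-1}$ while that of $\eta_-u$ is $\eta_-u_{k+1}$.

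For the equation $(X+\la)u=v$, since $X=\eta_++\eta_-$, projection onto mode $k$ yields
\[
\eta_+u_{k-1} + \eta_-u_{k+1} + \la u_k \;=\; v_k. \qquad (\ast)
\]
For the equation $U_-u=0$, I would use $X_\perp = i(\eta_--\eta_+)$ and the $\mathfrak{sl}_2$-commutation relations (in particular $[V,X_\perp]=X$ and $[X,V]=X_\perp$) to express $U_-$ as a combination of $\eta_\pm$ and $V$, and then read off its action on mode $k$. A short computation gives that the mode-$k$ projection of $U_-u=0$ is equivalent to
\[
\eta_+u_{k-1} - \eta_-u_{k+1} + k u_k \;=\; 0. \qquad (\ast\ast)
\]

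Adding $(\ast)$ and $(\ast\ast)$ gives $2\eta_+u_{k-1} = (-\la-k)u_k + v_k$, and subtracting $(\ast\ast)$ from $(\ast)$ gives $2\eta_-u_{k+1} = (-\la+k)u_k + v_k$, which are precisely the two recursion relations claimed. Conversely, if the two recursions hold for every $k$, then summing them over the two signs reproduces $(\ast)$ and taking their difference reproduces $(\ast\ast)$; reassembling the Fourier series then yields back the pair of global differential equations $(X+\la)u=v$ and $U_-u=0$. The only delicate step is the second one, namely writing $U_-$ correctly in terms of $\eta_\pm$ and $V$ so as to obtain $(\ast\ast)$; once this is in hand, both directions of the equivalence reduce to routine algebra on Fourier coefficients.
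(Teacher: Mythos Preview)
Your proof is correct and follows essentially the same route as the paper: both arguments decompose into vertical Fourier modes, use $X=\eta_++\eta_-$ to get $(\ast)$, express $U_-=X_\perp-V=-i(\eta_+-\eta_-)-V$ to get $(\ast\ast)$ (your equation is just the paper's multiplied by $i$), and then take sum and difference to obtain the two recursions. The only thing left implicit in your write-up is the explicit identity $U_-=-i(\eta_+-\eta_-)-V$, but once that is written down the derivation of $(\ast\ast)$ is immediate and matches the paper exactly.
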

\begin{proof}
Recall that $X=\eta_+ + \eta_-$ and and assume $(X+\lambda)u = v$
then, taking the Fourier components of this equation and using \eqref{etapmprop}
we get for any $k\in \zz$
\begin{equation}\label{eq1}
 \eta_+u_{k-1} + \eta_-u_{k+1} +\lambda u_k= v_k.
\end{equation}
Similarly we can express $U_- = -i(\eta_+-\eta_-) - V$ and the condition $U_-u = 0$
becomes
\begin{equation}\label{eq2}
-i(\eta_+u_{k-1} - \eta_-u_{k+1}) - ik u_k = 0 ~ \forall k\in \zz.
\end{equation}
Now inserting one equation into the other, one deduces that \eqref{eq1}
and \eqref{eq2} are equivalent to \eqref{recursion} and this completes the proof.
\end{proof}
\begin{lemm}\label{u0=0}
Let $\la\in \cc$ and $u\in \mc{D}'(SM)$ satisfies $U_-u=0$, $(X+\la)^ju=0$ 
for some $j\geq 1$. Set $u^{(\ell)}:=(X+\la)^\ell u$ for each $\ell\leq j$ and assume that $u^{(j-1)}\not=0$.
Then for each $\ell\leq j-2$
\[
(\Delta_M+\la(\la+1))u_0^{(j-1)}=0, \quad (\Delta_M+\la(\la+1))^{j-1-\ell}u_0^{(\ell)}=(2\la+1)^{j-1-\ell}u^{(j-1)}_0.
\]
\end{lemm}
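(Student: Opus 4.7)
\emph{Proof plan.} The plan is to combine the Casimir identity on $SM$ with a short descending induction on $\ell$.

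First I would verify that $U_-u^{(\ell)}=0$ for every $\ell\leq j$. The commutation $[X,U_-]=-U_-$ gives $U_-(X+\la)=(X+\la-1)U_-$, so the hypothesis $U_-u=0$ propagates inductively: $U_-u^{(\ell+1)}=(X+\la-1)U_-u^{(\ell)}=0$.

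Next I would use the form of the Casimir from (\ref{Casimir}), namely $\Omega=X^2-X+4U_-+4VU_-$. Because $U_-$ kills every $u^{(\ell)}$, the last two terms drop out and $\Omega u^{(\ell)}=(X^2-X)u^{(\ell)}$. Expanding with $Xu^{(\ell)}=u^{(\ell+1)}-\la u^{(\ell)}$ gives the clean identity
\[
(\Omega-\la(\la+1))\,u^{(\ell)}=u^{(\ell+2)}-(2\la+1)u^{(\ell+1)}.
\]
Since $[\Omega,V]=0$, the Casimir preserves the Fourier decomposition, so $(\Omega u^{(\ell)})_0=\Omega u_0^{(\ell)}$. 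On the $V$-invariant subspace (where $V$ acts as $0$) the Casimir reduces to $X^2+X_\perp^2$, and identifying this with $-\Delta_M$ on $M$ (write $X,X_\perp$ at $(x,v)$ as horizontal lifts of the orthonormal frame $(v,v^\perp)$, so that on $\pi_0^*f$ this operator equals the trace of the Hessian, whose negative is the non-negative Laplacian of the paper) yields the recursion
\[
(\Delta_M+\la(\la+1))\,u_0^{(\ell)}=(2\la+1)u_0^{(\ell+1)}-u_0^{(\ell+2)}.
\]

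With this recursion the conclusion is formal. Taking $\ell=j-1$ and using $u^{(j)}=u^{(j+1)}=0$ immediately gives the first assertion $(\Delta_M+\la(\la+1))u_0^{(j-1)}=0$. The second assertion follows by descending induction on $\ell$: assuming the formula at $\ell+1$ and $\ell+2$, apply $(\Delta_M+\la(\la+1))^{j-2-\ell}$ to the recursion. The $u_0^{(\ell+2)}$-term becomes $(2\la+1)^{j-3-\ell}(\Delta_M+\la(\la+1))u_0^{(j-1)}=0$, while the $u_0^{(\ell+1)}$-term contributes $(2\la+1)\cdot(2\la+1)^{j-2-\ell}u_0^{(j-1)}=(2\la+1)^{j-1-\ell}u_0^{(j-1)}$, as required. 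The main obstacle is the identification in step three: one must check carefully that $\Omega|_{\ker V}=-\Delta_M$ in the paper's non-negative sign convention; once this is in place the rest of the argument is purely algebraic and requires no further analysis on $M$.
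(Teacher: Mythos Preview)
Your proposal is correct and follows essentially the same route as the paper: propagate $U_-u^{(\ell)}=0$, use the Casimir identity \eqref{Casimir} together with $\Delta_M u_0^{(\ell)}=-\Omega u_0^{(\ell)}$ to obtain the one-step recursion $(\Delta_M+\la(\la+1))u_0^{(\ell)}=(2\la+1)u_0^{(\ell+1)}-u_0^{(\ell+2)}$, and then conclude by descending induction. One harmless slip: from $[X,U_-]=-U_-$ you get $U_-(X+\la)=(X+\la+1)U_-$, not $(X+\la-1)U_-$; either way $U_-u^{(\ell)}=0$ propagates, so the argument is unaffected.
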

\begin{proof}
Denote by $u^{(\ell)}:=(X+\la)^\ell u$, then $U_-u^{(\ell)}=0$ for all $\ell$ by the fact that $X$ preserves $\ker U_-$. 
Using \eqref{Casimir} we get for each $\ell\leq j$
\[\begin{split} 
\Delta_Mu^{(\ell)}_0 &=-\Omega u^{(\ell)}_0=((X-X^2)u^{(\ell)})_0=((X-1)(\la u^{(\ell)}-u^{(\ell+1)}))_0\\
& =-\la(\la+1)u^{(\ell)}_0+ (2\la+1) u^{(\ell+1)}_0-u^{(\ell+2)}_0
\end{split}\]
The result then follows from an easy induction.
\end{proof}
\begin{prop}\label{no_int_jord}
Let $n\in\nn$, assume that $u\in \mc{D}'(SM)$ satisfies  $U_-u=0$, $u_0=0$ and $(X-n)^ju=0$ for some $j\in\nn$,  then $(X-n)u=0$.
\end{prop}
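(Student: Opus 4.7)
The plan is to argue by contradiction: let $j$ be the minimal positive integer with $(X-n)^j u = 0$ and suppose $j \geq 2$. Setting $u^{(\ell)} := (X-n)^\ell u$, we have $u^{(j)} = 0$ and $u^{(j-1)} \neq 0$, with $U_- u^{(\ell)} = 0$ for every $\ell$ since $[X, U_-] = -U_-$ preserves $\ker U_-$. The goal is to force $u^{(j-1)} = 0$, contradicting the minimality of $j$. The two tools are Lemma \ref{u0=0}, which pins the zeroth Fourier components along the Jordan chain to eigenspaces of $\Delta_M$, and Lemma \ref{recurrence_relation}, which spreads vanishing of individual Fourier modes via the raising/lowering operators $\eta_\pm$.

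The first step pins down $u_0^{(j-1)}$ and $u_0^{(j-2)}$. Taking $\la = -n$ one has $2\la + 1 = 1 - 2n \neq 0$ since $n \in \nn$, so the identity
\[
(\Delta_M + n(n-1))^{j-1-\ell}\, u_0^{(\ell)} = (1-2n)^{j-1-\ell}\, u_0^{(j-1)}
\]
of Lemma \ref{u0=0}, applied at $\ell = 0$ with $u_0 = 0$, yields $u_0^{(j-1)} = 0$; at $\ell = j-2$ it becomes $(\Delta_M + n(n-1))\, u_0^{(j-2)} = 0$. For $n \geq 2$ the operator $\Delta_M + n(n-1)$ is strictly positive on the compact surface, so $u_0^{(j-2)} = 0$. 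For $n = 1$ one only gets $u_0^{(j-2)}$ constant (since $\ker\Delta_M$ consists of constants and $\Delta_M$ has no Jordan blocks); if $j = 2$ this constant equals $u_0 = 0$, and for $j \geq 3$ the identity at $\ell = j-3$ combined with the relation $\Delta_M u_0^{(\ell)} = -u_0^{(\ell+1)} - u_0^{(\ell+2)}$ (read off the proof of Lemma \ref{u0=0} at $\la = -1$) forces the constant to vanish as well.

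The second step uses Lemma \ref{recurrence_relation} at the two consecutive levels $w := u^{(j-1)}$ (inhomogeneity $0$) and $u^{(j-2)}$ (inhomogeneity $w$):
\[
 2\eta_+ w_{k-1} = (n-k)\, w_k, \qquad 2\eta_+ u^{(j-2)}_{k-1} = (n-k)\, u^{(j-2)}_k + w_k,
\]
with symmetric $\eta_-$ counterparts. Starting from $w_0 = 0$, the first recursion propagates $w_k = 0$ for $1 \leq k \leq n-1$ (where $n - k \neq 0$); feeding this into the second, together with $u^{(j-2)}_0 = 0$, propagates $u^{(j-2)}_k = 0$ on the same range. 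At the critical index $k = n$ the coefficient $n - k$ vanishes, and the second recursion collapses to $w_n = 2\eta_+ u^{(j-2)}_{n-1} = 0$. The $\eta_-$ version gives $w_{-n} = 0$, after which the first recursion propagates vanishing outward to all $|k| \geq n+1$; hence $w = u^{(j-1)} = 0$, the desired contradiction.

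The main subtlety is the borderline case $n = 1$: the $w$-recursion alone gives no information past $w_0 = 0$ because the inductive range $1 \leq k \leq n-1$ is empty, but at $k = n = 1$ the two-level scheme reads off $w_1 = 2\eta_+ u_0^{(j-2)} = 0$ directly, so the argument stays uniform in $n \in \nn$.
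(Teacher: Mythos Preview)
Your argument is correct and follows essentially the same route as the paper: set $v = u^{(j-1)}$, $w = u^{(j-2)}$, establish $v_0 = w_0 = 0$, and then use the coupled recursions from Lemma \ref{recurrence_relation} at the two levels to propagate vanishing of the Fourier modes and force $v = 0$. The paper is terser at the first step, simply asserting $u_0^{(\ell)} = 0$ for all $\ell$ ``by Lemma \ref{u0=0}'' without further comment; you spell out how $u_0^{(j-1)} = 0$ (from $\ell = 0$ and $1-2n \neq 0$) and then $u_0^{(j-2)} = 0$, and your separate handling of the borderline case $n=1$ is more careful than the paper's one-line claim.

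One remark: your derivation of $u_0^{(j-2)} = 0$ explicitly uses compactness of $M$ (strict positivity of $\Delta_M + n(n-1)$ on $L^2$ for $n \geq 2$, and integration over $M$ to kill the constant for $n = 1$). The proposition lives in a section where $M$ is allowed to be convex co-compact as well, and it is later invoked in that setting (Theorem \ref{classicquantic}). The paper's own proof is equally silent on this point---Lemma \ref{u0=0} only yields $(\Delta_M + n(n-1))^{j-1-\ell} u_0^{(\ell)} = 0$, which does not by itself give $u_0^{(\ell)} = 0$ without some injectivity input---so your argument is at least as complete as the paper's.
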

\begin{proof} Let $u^{(\ell)}:=(X-n)^\ell u$. By Lemma \ref{u0=0}, we have $u^{\ell}_0=0$ for all 
$\ell=0,\dots,j-1$.
Assume that $(X-n)u\not=0$, then without loss of generality we can assume that $u^{(j-1)}\not=0$. 
Then using that $(X-n)$ preserves $\ker U_-$, we get that
$w:=u^{(j-2)}$  and $v:=u^{(j-1)}$ are non-zero distributions such that $U_- w = U_-v=0$, $(X-n)w=v$, $(X-n)v=0$ as well as $w_0=v_0$. 
Let us next use the knowledge that $w_0=v_0=0$ in order to obtain a contradiction.
 Lemma~\ref{recurrence_relation} applied to
$(X-n)w=v$ and $(X-n)v=0$ implies, that 
\begin{equation}
 \label{u_recursion} 
 2\eta_\pm w_{k\mp 1} = (n\mp k) w_k+ v_k, \quad 2\eta_\pm v_{k\mp 1} = (n\mp k) v_k
\end{equation}

From $v_0=0$ and the second equation of \eqref{u_recursion} we obtain that $v_k=0$ for all $|k|<n$.
Now this knowledge together with $w_0=0$ and the first recursion relation in \eqref{u_recursion}
leads to $w_k = 0$ for all $|k|<n$. Now let us consider the first equation of \eqref{u_recursion} with $k=n$
\[ 
 0 = 2\eta_+ w_{n-1} = v_n.
\]
Using once more the second recursion relations \eqref{u_recursion} this implies
$v_k=0$ for all $k\geq n$. Analogously we obtain $v_{-n} = 0$ and using the recursion
we see that $v_k=0$ for all $k\neq 0$. Thus $v=0$ which is the desired contradiction.
\end{proof}

\section{Ruelle resonances for co-compact quotients}

In this section, we consider a co-compact discrete subgroup $\Gamma\subset G$ acting freely 
on $\hh^{2}$, so that $M:=\Gamma\backslash \hh^{2}$ is a smooth oriented compact hyperbolic surface, 
and we describe the Ruelle resonance spectrum and eigenfunctions. 
The characterization of the  spectrum and eigenfunctions was done in \cite{DFG} except for some 
special points localized at negative half-integers. Here we analyze those points as well, and we simplify 
the proof of the fact that the algebraic multiplicities and geometric multiplicities agree.

First we recall the result \cite[Theorems 1.4 and 1.7]{FaSj} in the case of geodesic flows(see also  \cite[Theorem 1]{BuLi} or \cite[Propositions 3.1 and 3.3]{DyZw}):
\begin{prop}\label{resolventcompact}
Let $(M,g)$ be a Riemannian manifold with Anosov geodesic flow. 
For each $N>0$, the vector field $X$ generating the flow is such that $\la\mapsto -X-\la$
is a holomorphic family of Fredholm operators of index $0$
\[-X-\la : {\rm Dom}(X)\cap \mc{H}^N\to \mc{H}^N \] 
for $\{{\rm Re}(\la)>-N\}$, where $\mc{H}^N$ is a Hilbert space such that  
$C^\infty(SM)\subset \mc{H}^N\subset \mc{D}'(SM)$. 
The spectrum of $-X$ on $\mc{H}^N$ in $\{{\rm Re}(\la)>-N\}$ is discrete, 
contained in $\{{\rm Re}(\la)\leq 0\}$, and minus the residue 
\[\Pi^X_{\la_0}:= -{\rm Res}_{\la_0}(-X-\la)^{-1},  \quad {\rm Re}(\la_0)>-N\]
is a projector onto the finite dimensional space 
\[ {\rm Res}_X(\la_0):=\{ u\in \mc{H}^N ; \exists j\in\nn, (-X-\la_0)^ju=0\}\] 
of generalized eigenstates, satisfying $X\Pi^X_{\la_0}=\Pi^X_{\la_0}X$. 
The eigenvalues, generalized eigenstates and the Schwartz kernel of 
$\Pi^X_{\la_0}$ are independent of $N$ and one has 
\begin{equation}\label{eq:ruelle_res_WF_cond}
{\rm Res}_X(\la_0)=\{ u\in \mc{D}'(SM) ; {\rm WF}(u)\subset E_u^*,\, \exists j\in\nn,  (-X-\la_0)^ju=0\}.
\end{equation}
\end{prop}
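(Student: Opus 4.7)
The plan is to follow the microlocal anisotropic Sobolev space approach of Faure-Sj\"ostrand. The principal symbol of $-iX$ on $T^*(SM)$ is $p(x,\xi)=\xi(X(x))$, and its characteristic set $\Sigma=\{p=0\}$ contains the dual bundles $E_s^*$, $E_u^*$ and $E_0^*$. The Anosov condition translates, via the symplectic lift of $\varphi_t$, into the facts that $E_u^*$ is a radial source and $E_s^*$ a radial sink for the Hamiltonian flow of $p$ on $\Sigma$, while $E_0^*$ is flow-invariant. The first step is to construct an order function $m\in S^0(T^*(SM))$, homogeneous of degree $0$ at infinity, equal to $-1$ in a conic neighborhood of $E_u^*$, equal to $+1$ in a conic neighborhood of $E_s^*\cup E_0^*$, and non-increasing along $H_p$ (with strict decrease in the transition region). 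One then sets $A_{G_N}:=\Op(\langle\xi\rangle^{Nm})$ and defines the anisotropic Sobolev space $\mc{H}^N:=A_{G_N}^{-1}L^2(SM)$ with norm $\norm{u}_{\mc{H}^N}:=\norm{A_{G_N}u}_{L^2}$. By construction, elements of $\mc{H}^N$ have Sobolev regularity of order $-N$ in a conic neighborhood of $E_u^*$ and of order $+N$ elsewhere; in particular $C^\infty(SM)\subset\mc{H}^N\subset\mc{D}'(SM)$.

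The crucial step is the Fredholm estimate: for $\Re(\la)>-N$ (after suitable normalization of $m$) and $M$ large one has
\[
\norm{u}_{\mc{H}^N}\leq C\norm{(-X-\la)u}_{\mc{H}^N}+C\norm{u}_{\mc{H}^{-M}},
\]
together with a twin estimate for the formal adjoint. This is obtained by combining (i) elliptic regularity outside $\Sigma$, (ii) a radial source estimate at $E_u^*$, whose threshold condition forces $N$ to be large compared to $-\Re(\la)$, (iii) a radial sink estimate at $E_s^*$, which places no threshold condition since one works above threshold on the sink side, and (iv) propagation of singularities along bicharacteristics between these invariant sets. Since $\mc{H}^N\hookrightarrow\mc{H}^{-M}$ is compact, the estimate implies that $\la\mapsto -X-\la:\mathrm{Dom}(X)\cap\mc{H}^N\to\mc{H}^N$ is a holomorphic family of Fredholm operators of index $0$ on $\{\Re\la>-N\}$.

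From here the remaining assertions follow directly. For $\Re(\la)>0$ the resolvent is given by the convergent Laplace transform $(-X-\la)^{-1}f=-\int_0^\infty e^{-\la t}\mc{L}_t f\,dt$, bounded on $L^2\supset\mc{H}^N$, so the spectrum of $-X$ lies in $\{\Re\la\leq 0\}$; analytic Fredholm theory then produces a discrete spectrum with finite rank residues. The wavefront characterization \eqref{eq:ruelle_res_WF_cond} is a consequence of the same radial source estimate: a generalized eigenfunction $u\in\mc{H}^N$ must satisfy $\WF(u)\subset E_u^*$ by propagation from the smooth region into the source, and conversely any distribution with $\WF(u)\subset E_u^*$ automatically lies in $\mc{H}^N$ for every $N$. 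This intrinsic characterization also proves that the eigenvalues, generalized eigenstates and Schwartz kernel of $\Pi^X_{\la_0}$ do not depend on $N$.

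The principal technical obstacle is the construction of the order function $m$ satisfying all the monotonicity properties simultaneously (which uses the Anosov splitting together with an averaging argument along the flow), and the precise form of the radial source and sink estimates near $E_u^*$ and $E_s^*$. Both are carried out in full detail in \cite{FaSj,DyZw}, so one may simply invoke them here.
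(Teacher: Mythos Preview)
Your outline is correct and follows exactly the approach of the references the paper cites. In fact the paper does not give its own proof of this proposition at all: it is stated as a recollection of \cite[Theorems~1.4 and~1.7]{FaSj} (with parallel references to \cite{BuLi,DyZw}), and the wavefront characterization \eqref{eq:ruelle_res_WF_cond} is attributed to \cite[Lemma~5.1]{DFG}. So you have written substantially more than the paper does, and what you wrote is the right sketch of the Faure--Sj\"ostrand anisotropic Sobolev machinery.

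One terminological slip worth fixing: with the conventions used here ($E_u^*(E_u\oplus\rr X)=0$, $E_s^*(E_s\oplus\rr X)=0$), it is $E_s^*$ that is the radial \emph{source} and $E_u^*$ the radial \emph{sink} for the bicharacteristic flow of $p(x,\xi)=\xi(X)$ on the cosphere bundle (covectors in $E_u^*$ pair with $E_s$, which contracts, so they expand and attract nearby directions at fiber infinity). Your assignment of regularities is right --- order $+N$ near $E_s^*$, order $-N$ near $E_u^*$ --- and so is the placement of the threshold condition (the below-threshold estimate at the sink $E_u^*$ is what forces $N$ large relative to $-\Re\la$); only the words ``source'' and ``sink'' are interchanged in your text.
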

Notice that the description \eqref{eq:ruelle_res_WF_cond} of resonant states is not explicit in \cite{FaSj} but if follows easily from that paper \cite{FaSj} - a proof is given in Lemma 5.1 of \cite{DFG}.
The eigenvalues, eigenstates and generalized eigenstates are respectively 
called \emph{Ruelle resonances},  \emph{Ruelle resonant states} and 
\emph{Ruelle generalized resonant states}. The existence 
of  generalized resonant states which are not resonant states means that the 
algebraic multiplicity of the eigenvalue is larger than the geometric 
multiplicity, in which case there are Jordan blocks in the matrix 
representing $(-X-\la_0)$ on ${\rm Im}(\Pi^X_{\la_0})$. 

A direct corollary of Proposition \ref{resolventcompact} is that the $L^2$-resolvent 
\[\la\mapsto R_X(\la):=(-X-\la)^{-1}\] 
of $X$ extends from ${\rm Re}(\la)>0$ to $\cc$ as a meromorphic 
family of bounded operators $C^\infty(SM)\to \mc{D}'(SM)$, with poles the Ruelle 
resonances of $-X$.  Indeed, Proposition 3.2 in \cite{DyZw} shows that in the region ${\rm Re}(\la)>C$ for some $C>0$, the $L^2$-resolvent $R_X(\la)$ is also bounded as a map 
$\mc{H}^N\to \mc{H}^N$, thus $R_X(\la)$ is the inverse of $-X-\la$ on $\mc{H}^N$ in that half-space. By Proposition \ref{resolventcompact},  $R_X(\la):\mc{H}^N\to \mc{H}^N$ extends as a meromorphic family of bounded operators 
in ${\rm Re}(\la)>-N$ and thus is meromorphic as a bounded map 
$C^\infty(SM)\to \mc{D}'(SM)$. Density of $C^\infty(SM)$ in $\mc{H}^N$ and 
unique continuation in $\la$ shows that $R_X(\la)$ is actually independent of 
$N$ as a map $C^\infty(SM)\to \mc{D}'(SM)$. The fact that 

We define the following spaces for $j\geq 1$
\begin{equation}\label{Resk}
{\rm Res}^j_X(\la_0)=\{ u\in \mc{D}'(SM) ; {\rm WF}(u)\subset E_u^*,\,  (-X-\la_0)^ju=0\},
\end{equation}
The operator $(-X-\la_0)$ is nilpotent on the finite dimensional space 
${\rm Res}_X(\la_0)$ and 
$\la_0$ is a Ruelle resonance if and only if ${\rm Res}_X^1(\la_0)\not=0$. The 
presence of Jordan blocks for $\la_0$ is equivalent to having 
${\rm Res}_X^j(\la_0)\not={\rm Res}_X^1(\la_0) $ for some $j>1$. Let us define 
for $j\geq1, m\geq0$ the subspace 
\begin{equation}\label{Vm}
V_m^j(\la_0):=\{ u\in {\rm Res}_X^j(\la_0); U_-^{m+1}u=0\}.
\end{equation}
Obviously $V_m^j(\la_0)\subset V_{m+1}^j(\la_0)$ for each $m\geq 0,j\geq 1$. 
The spaces $V_0^j(\la_0)$ are spanned by the generalized resonant states which 
are invariant under the unstable horocycle flow.
First, following \cite{DFG}, we have the 
\begin{lemm}\label{decomp}
For each $\la_0\in \cc$, there exists $m\geq 0$ such that
${\rm Res}_X^1(\la_0)=V_m^1(\la_0)$.
\end{lemm}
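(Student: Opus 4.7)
The plan is to use the commutation relation $[X,U_-]=-U_-$ from \eqref{commutation} to show that $U_-^k$ shifts resonance spaces by $+k$, sending ${\rm Res}_X^1(\la_0)$ into ${\rm Res}_X^1(\la_0+k)$, and then invoke the fact that Ruelle resonances are confined to the closed left half-plane (Proposition \ref{resolventcompact}) to force $U_-^{m+1}u=0$ as soon as ${\rm Re}(\la_0+m+1)>0$.

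Concretely, I would proceed in three short steps. First, a straightforward induction on $k$, starting from $[X,U_-]=-U_-$, yields $[X,U_-^k]=-kU_-^k$, so that for every $u\in {\rm Res}_X^1(\la_0)$,
\[
(-X-(\la_0+k))U_-^k u = U_-^k(-X-\la_0)u = 0.
\]
Second, because $U_-$ is a smooth first-order differential operator on $SM$, it cannot enlarge wavefront sets, so $\WF(U_-^k u)\subset \WF(u)\subset E_u^*$. Combining these two observations with the microlocal characterization \eqref{eq:ruelle_res_WF_cond} gives $U_-^k u\in {\rm Res}_X^1(\la_0+k)$ for every $k\geq 0$.

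Third, Proposition \ref{resolventcompact} says that the Ruelle spectrum is contained in $\{{\rm Re}(\la)\leq 0\}$, so ${\rm Res}_X^1(\mu)=0$ whenever ${\rm Re}(\mu)>0$. I would then choose any integer $m\geq 0$ with $m+1>-{\rm Re}(\la_0)$ (e.g.\ the smallest such). Then ${\rm Re}(\la_0+m+1)>0$, hence by the previous step $U_-^{m+1}u=0$ for every $u\in {\rm Res}_X^1(\la_0)$. This yields ${\rm Res}_X^1(\la_0)\subset V_m^1(\la_0)$, and the reverse inclusion is trivial from the definition \eqref{Vm}.

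I do not anticipate a genuine obstacle: the only point that requires care is the preservation of the wavefront condition under $U_-^k$, and this is a standard consequence of the microlocal calculus for smooth differential operators. The integer $m$ is produced explicitly by the real part of $\la_0$, which also matches the rough picture that higher Jordan blocks or longer $U_-$-chains only appear as one moves deeper into the left half-plane.
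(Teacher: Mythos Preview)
Your proposal is correct and follows essentially the same approach as the paper: use the commutation relation $[X,U_-]=-U_-$ to show $U_-^k$ maps ${\rm Res}_X^1(\la_0)$ into ${\rm Res}_X^1(\la_0+k)$ (with the wavefront set preserved since $U_-$ is a differential operator), then invoke the absence of Ruelle resonances in $\{{\rm Re}(\la)>0\}$ from Proposition~\ref{resolventcompact} to force $U_-^{m+1}u=0$ once $m+1>-{\rm Re}(\la_0)$. Your write-up is slightly more detailed than the paper's, but the argument is identical.
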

\begin{proof}
By the commutation relation \eqref{commutation}, for $u\in {\rm Res}_X^1(\la_0)$ 
we have for each $m\geq 0$
\[ (-X-\la_0-m)U^m_-u=0\] 
thus for $m>-{\rm Re}(\la_0)$, we get $U_-^mu=0$ since we know there is no Ruelle 
resonance in 
${\rm Re}(\la)>0$ by Proposition \ref{resolventcompact} and ${\rm WF}(U_-^mu)\subset {\rm WF}(u)\subset E_u^*$.
\end{proof}
Next we analyze the generalized resonant states that are in $\ker U_-$, i.e. 
the spaces $V_0^j(\la_0)$ for $j\geq 1$. This part is essentially contained in 
\cite{FlFo} (even though they 
do not consider the problem from the point of view of Ruelle resonances) but we 
provide a more geometric method by using the Poisson operator, 
with the advantage that this approach extends to the convex co-compact setting. Our proof does not use the representation theory of ${\rm SL}_2(\rr)$ at all.
\begin{theo} \label{noninteger}
Let $M=\Gamma\backslash \hh^{2}$ be a smooth oriented compact hyperbolic surface 
and let $SM$ be its unit tangent bundle.\\ 
1) For each $\la_0\in \cc\setminus (-\demi\cup  -\nn)$  the pushforward map 
${\pi_0}_*: \mc{D}'(SM)\to \mc{D}'(M)$ restricts to a linear isomorphism of complex vector spaces
\begin{equation}\label{pi*iso_cpt}
{\pi_0}_* : V_0^1(\la_0) \to \ker (\Delta_M+\la_0(1+\la_0)) 
\end{equation}
where $\Delta_M$ is the Laplacian on $M$ acting on functions, and there are no 
Jordan blocks,  
i.e. $V_0^j(\la_0)=V_0^1(\lambda_0)$ for $j>1$.\\
2) For $\la_0=-\demi$, the Jordan blocks are of order $1$, i.e. $V_0^j(-\demi)=V_0^1(-\demi)$ 
for $j>2$, 
the map 
\begin{equation}\label{pi*iso2_cpt} 
{\pi_0}_* : V_0^1(-\demi)\to \ker (\Delta_M-\tfrac{1}{4})
\end{equation}
is a linear isomorphism of complex vector spaces and  
\begin{equation}
 \label{eq:pi_projection_cpt}
 {\pi_0}_* : V_0^2(-\demi) \to \ker (\Delta_M-\tfrac{1}{4})
\end{equation}
has a kernel of dimension $\dim \ker (\Delta_M-\tfrac{1}{4})$.\\
3) For $\la_0=-n\in -\nn$, there are no Jordan blocks, 
i.e. $V_0^j(-n)=V_0^{1}(-n)$ if $j>1$. The following map is an isomorphism of complex vector spaces
\begin{equation}\label{pi*iso3_cpt}
 {\pi_n}_*\oplus {\pi_{-n}}_* : V_0^1(-n)\to H_{n}(M)\oplus H_{-n}(M)
\end{equation}
with $H_n(M):=\{u\in C^\infty(M; \mc{K}^n); \bbar{\pl}u=0\}$, 
$H_{-n}(M):=\{u\in C^\infty(M; \mc{K}^{-n}); \pl u=0\}$. 
\end{theo}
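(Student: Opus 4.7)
The plan is to lift each $u \in V_0^1(\la_0)$ to a $\Gamma$-invariant distribution $\tilde u$ on $S\hh^2$ and set $\tilde w := \Phi_-^{-\la_0}\tilde u$. Using $X\Phi_- = -\Phi_-$ and $U_-\Phi_- = 0$ one checks $\tilde w \in \ker X\cap \ker U_-$, so by \eqref{Qpmiso} $\tilde w = \mc{Q}_-(\omega)$ for a unique $\omega \in \mc{D}'(\mathbb{S}^1)$, and \eqref{changeofPhi} converts the $\Gamma$-invariance of $\tilde u$ into the equivariance $|d\gamma|^{-\la_0}\gamma^*\omega = \omega$. The wave-front condition is automatic, since $\ker dB_- = \mathrm{span}(X,U_-)$ gives $\WF(\mc{Q}_-(\omega)) \subset E_u^*$ and multiplication by the smooth $\Phi_-^{\la_0}$ preserves this. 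Pushing forward gives ${\pi_0}_*\tilde u = \mc{P}_{\la_0}(\omega)$, and compactness of $M$ ensures every eigenfunction lifts to a bounded, hence tempered, eigenfunction on $\hh^2$.

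For Part 1, Lemma \ref{poissoniso} makes $\mc{P}_{\la_0}$ an isomorphism for $\la_0 \notin -\nn$, giving the bijection \eqref{pi*iso_cpt}. Jordan blocks are excluded via Lemma \ref{u0=0} and self-adjointness of $\Delta_M$: if $u^{(j-1)} := (X+\la_0)^{j-1}u \neq 0$, then $u^{(j-1)}\in V_0^1(\la_0)$ and the established bijection forces ${\pi_0}_* u^{(j-1)}\neq 0$; Lemma \ref{u0=0} then yields $(\Delta_M+\la_0(\la_0+1)){\pi_0}_* u^{(j-2)} = (2\la_0+1){\pi_0}_* u^{(j-1)}\neq 0$ (using $2\la_0+1\neq 0$), producing a nontrivial Jordan block for the self-adjoint $\Delta_M$, a contradiction. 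For Part 2, the same argument gives the isomorphism \eqref{pi*iso2_cpt} on $V_0^1(-\demi)$ since $\mc{P}_{-\demi}$ is still bijective, but the vanishing $2\la_0+1=0$ allows Jordan blocks. For each $f = \mc{P}_{-\demi}(\omega_f)$, the formal derivative $\pl_\la(\Phi_-^\la\mc{Q}_-(\omega_f))|_{-\demi}$ satisfies $(X+\la)\pl_\la Y = -Y$ where $Y(\la):=\Phi_-^\la\mc{Q}_-(\omega_f)$, placing it in $V_0^2(-\demi)$; the defect in $\Gamma$-invariance is the cocycle $\gamma\mapsto (\log|d\gamma|\circ B_-)\,Y(-\demi)$, which we show is a coboundary in $V_0^1(-\demi)$ (here the coincidence of asymptotic exponents at $-\demi$ is essential). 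After a corresponding correction and subtraction of a $V_0^1$-summand one obtains an element of $\ker{\pi_0}_*|_{V_0^2(-\demi)}$ for each $f$, producing the $\dim\ker(\Delta_M-\tfrac14)$-dimensional kernel. Jordan blocks of order $\geq 3$ are ruled out by the asymptotic expansion \eqref{weakasymptotic0}: at $\la = -\demi$ only single $\log t$ factors appear, never $\log^2 t$.

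For Part 3, $\mc{P}_{-n}$ has kernel $W_n$ by \eqref{kerP-n}, so ${\pi_0}_*$ is inadequate; instead we exploit the Fourier decomposition in the fibers of $SM$. Lemma \ref{recurrence_relation} with $\la=-n$ gives the recursions $2\eta_\pm u_{k\mp 1} = (n\mp k)u_k$. Taking $k=n$ in the plus relation and $k=n-1$ in the minus relation forces $\eta_- u_n = 0$, hence $\bbar\pl u_n = 0$ by \eqref{dbar}, so ${\pi_n}_* u \in H_n(M)$; symmetrically ${\pi_{-n}}_* u \in H_{-n}(M)$. Iterating the recursion inward from $|k|=n$, where the multipliers $n\mp k$ never vanish for integer $|k|<n$, shows that $(u_n,u_{-n})$ determines all intermediate modes (which vanish identically when both are zero), and iterating outward determines the tails; this gives injectivity of \eqref{pi*iso3_cpt}. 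Surjectivity is the converse construction starting from $(f,g)\in H_n(M)\oplus H_{-n}(M)$: convergence of the resulting Fourier series in $\mc{D}'(SM)$ follows from \eqref{borneuk} together with elliptic regularity for $\eta_\pm$, and the result can be re-identified with $\Phi_-^{-n}\mc{Q}_-(\omega)$ for a suitable boundary distribution $\omega$, from which the equations $(X-n)u=0$, $U_-u=0$ and the wave-front bound follow. Jordan blocks are excluded via Proposition \ref{no_int_jord}: Lemma \ref{u0=0} shows $u_0^{(j-1)}\in\ker(\Delta_M+n(n-1))$, which is trivial for $n\geq 2$ (since $-n(n-1)$ is not in $\mathrm{spec}(\Delta_M)$) and reduces to constants for $n=1$ which are killed by integrating $\Delta_M u_0^{(j-2)} = -u_0^{(j-1)}$ over $M$; iterating yields $u_0 = 0$, and Proposition \ref{no_int_jord} then forces $(X-n)u=0$.

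The main obstacle lies in Part 2: producing the Jordan-block elements $\Gamma$-invariantly requires showing that the $(\log|d\gamma|)\circ B_-$-cocycle coming from the $\pl_\la$-construction is trivialized by a suitable element of $V_0^1(-\demi)$, and ruling out order-$\geq 3$ blocks requires careful use of the log-asymptotics \eqref{weakasymptotic0}--\eqref{Sen1/2} together with the behavior of the scattering operator $\mc{S}(s)$ at $s=\demi$.
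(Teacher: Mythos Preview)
Your outline for Part~1 is essentially the paper's argument, and the use of Lemma~\ref{u0=0} to exclude Jordan blocks is a clean repackaging of it. There are, however, genuine gaps in Parts~2 and~3.

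In Part~2 you correctly identify that the $\partial_\lambda$–construction produces the cocycle $\gamma\mapsto(\log N_\gamma\circ B_-)\,\tilde u^{(0)}$, but you never say \emph{how} to trivialize it; calling this ``the main obstacle'' is not a proof. The paper's point is that the trivializing $\omega^{(1)}$ is read off directly from the two–term asymptotic \eqref{Sen1/2} of $\mathcal P_{-1/2}(\omega^{(0)})$: the $\Gamma$–invariance of $\tilde\varphi_0$ forces the pair $(\omega^{(0)},\omega^{(1)})$ appearing in that expansion to satisfy exactly $\gamma^*\omega^{(1)}=N_\gamma^{1/2}\omega^{(1)}-\sqrt{2}\,N_\gamma^{1/2}(\log N_\gamma)\omega^{(0)}$, which is precisely the equivariance needed to make $\tilde u^{(1)}:=\Phi_-^{-1/2}\mathcal Q_-(\omega^{(1)})+\sqrt{2}(\log\Phi_-)\tilde u^{(0)}$ descend to $SM$. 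Your proposed exclusion of order $\ge 3$ via ``no $\log^2 t$ in \eqref{weakasymptotic0}'' is not a valid argument: for a putative $u\in V_0^3$ the relevant object is a combination of $\mathcal P_{-1/2},\partial_\lambda\mathcal P_{-1/2},\partial_\lambda^2\mathcal P_{-1/2}$, and the last of these does carry $\log^2$. The correct (and much simpler) route is the one you already used in Part~1: the proof of Lemma~\ref{u0=0} gives $(\Delta_M-\tfrac14)u_0^{(\ell)}=-u_0^{(\ell+2)}$ at $\lambda_0=-\tfrac12$, so a nonzero $u^{(2)}\in V_0^1$ would force a Jordan block for the self–adjoint $\Delta_M$.

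In Part~3 your derivation of $\eta_- u_n=0$ is circular. Taking $k=n$ in the plus relation gives $\eta_+u_{n-1}=0$, and $k=n-1$ in the minus relation gives $2\eta_-u_n=(2n-1)u_{n-1}$; together these do \emph{not} force $\eta_-u_n=0$. You must first prove $u_0=0$: for $n\ge 2$ this is immediate from $u_0\in\ker(\Delta_M-n(1-n))=0$, but for $n=1$ your ``integrate $\Delta_M u_0^{(j-2)}$'' argument only applies when $j\ge 2$ and says nothing about a single $u\in V_0^1(-1)$. The paper handles $n=1$ by pairing: if $u_0$ were a nonzero constant then $2\eta_-u_1=u_0$ gives $\|u_0\|^2=2\langle\eta_-u_1,u_0\rangle=-2\langle u_1,\eta_+u_0\rangle=0$. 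Only after $u_0=0$ do you get $u_k=0$ for $|k|<n$ via the plus recursion, and then $\eta_-u_n=0$ from the minus relation. Finally, for surjectivity the bound \eqref{borneuk} concerns Fourier modes of \emph{test} functions and does not control your constructed series; what is needed is the explicit $L^2$ recursion $\|u_{k}\|^2=\tfrac{k+n-1}{k-n}\|u_{k-1}\|^2$, which gives polynomial growth of $\|u_{n+\ell}\|_{L^2}$ and hence distributional convergence.
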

\begin{proof} \textbf{Case $\la_0\not\in -1/2\cup -\nn$, injectivity}. 
Since $X+\la_0$ is nilpotent on $V_0^2(\la_0)$, we can decompose this space into 
Jordan blocks and there is a non-trivial Jordan block if and only if $X+\la_0$ is 
not identically $0$ on $V_0^2(\la_0)$.
Assume $\la_0$ is a Ruelle resonance and let $u^{(0)}\in V^1_0(\lambda_0)\setminus\{0\}$
be a resonant state. If there is a Jordan block
associated to $u^{(0)}$, there is $u^{(1)}\in V^2_0(\la_0)$ so that 
\[ (X+\la_0)u^{(0)}=0 ,\quad (X+\la_0)u^{(1)}=u^{(0)}.\]  
We will show that in fact this is not possible. 
The wave front set of $u^{(j)}$ is contained in $E_u^*$.
We lift $u^{(0)}$ and $u^{(1)}$ on $S\hh^{2}$ by $\pi_{\Gamma}:S\hh^{2}\to SM$ 
to $\til{u}^{(0)},\til{u}^{(1)}$ so that $\gamma^*\til{u}^{(j)}=\til{u}^{(j)}$ 
for all $\gamma\in \Gamma$, $j=0,1$. 
Take the distribution $v^{(0)}:=\Phi_-^{-\la_0}\til{u}^{(0)}$ on $S\hh^2$ where 
$\Phi_-$ is defined by \eqref{phipm}. It satisfies $Xv^{(0)}=0$ as $X\Phi_-=-\Phi_-$.  
Then by \eqref{Qpmiso} there exists $\omega^{(0)}\in \mc{D}'(\mathbb{S}^1)$ so 
that $\mc{Q}_-\omega^{(0)}=v^{(0)}$.
Using $\gamma^*\tilde u^{(0)} = \tilde u^{(0)}$ together with \eqref{changeofPhi}, we have 
that for any $\gamma\in \Gamma$
\[
\gamma^*\omega^{(0)}=N_\gamma^{-\la_0}\omega^{(0)} \textrm{ with } N_\gamma:=|d\gamma|^{-1}.
\]
Next we compute, using \eqref{Upmkills}, that for $v^{(1)}:=\Phi_-^{-\la_0}(\til{u}^{(1)}+\log(\Phi_-)\til{u}^{(0)})$, 
\[
Xv^{(1)}=0, \quad U_-v^{(1)}=0
\]
and thus by \eqref{Qpmiso}  there is $\omega^{(1)}\in \mc{D}'(\mathbb{S}^1)$ 
such that $\mc{Q}_-\omega^{(1)}=v^{(1)}$. Since $\gamma^*\til{u}^{(1)}=\til{u}^{(1)}$ 
for all $\gamma\in \Gamma$, we get 
\begin{equation}\label{covariance}
\gamma^*v^{(1)}-N_\gamma^{-\la_0}v^{(1)}=N_{\gamma}^{-\la_0}\log(N_\gamma)v^{(0)}, \quad 
\gamma^*\omega^{(1)}-N_\gamma^{-\la_0}\omega^{(1)}=N_{\gamma}^{-\la_0}\log(N_\gamma)\omega^{(0)}.
\end{equation}
We apply the Poisson operator $\mc{P}_{\la_0}$ to $\omega^{(0)}$ and $\omega^{(1)}$, 
where $\mc{P}_\la$
is defined in Lemma \ref{poissoniso}.
Using the same lemma, we get that $\til{\varphi}_0:=\mc{P}_{\la_0}(\omega^{(0)})={\pi_0}_* \til{u}^{(0)}\not=0$ 
is $\Gamma$-invariant and satisfies
\[
(\Delta_{\hh^2}+\la_0(1+\la_0))\til{\varphi}_0=0 
\]
on $\hh^{2}$, and thus descend to $\varphi_0:={\pi_0}_*u^{(0)}$ on $M$
as a non-zero eigenfunction of $\Delta_{M}$ with eigenvalue $\la_0(1+\la_0)$ and this has 
to be a smooth function by ellipticity.  
Since $\Delta_M$ is  self-adjoint on $L^2(M)$ this implies $\lambda_0(1+\lambda_0)\in \rr$.

Now set $\psi:=\mc{P}_{\la_0}(\omega^{(1)})\in C^\infty(\hh^{2})$,
we also get from \eqref{changeofPhi}, \eqref{covariance}
\[ 
\gamma^*\psi-\psi= \mc{P}_{\la_0}(\log(N_\gamma)\omega^{(0)}), \quad (\Delta+\la_0(1+\la_0))\psi=0.
\]
Now, for arbitrary $\omega\in\mc{D}'(\mathbb{S}^1)$, we make the observation, by Taylor 
expanding the equation $(\Delta_{\hh^2}+\la(1+\la))\mc{P}_\la(\omega)=0$ with 
respect to $\la$ at $\la_0$ that for $k\geq 1$
\begin{equation}\label{deriveepoisson}
\begin{gathered}
(\Delta_{\hh^2}+\la_0(1+\la_0))\frac{\pl_{\la}^k\mc{P}_{\la_0}(\omega)}{k !} +(2\la_0+1)\frac{\pl_{\la}^{k-1}\mc{P}_{\la_0}(\omega)}{(k-1)!}+\frac{\pl_{\la}^{k-2}\mc{P}_{\la_0}(\omega)}{(k-2)!}=0, \\
\pl_\la^k \mc{P}_\la(\omega)(x)= {\pi_0}_*\left(\Phi_-^{\la}(\log \Phi_-)^{k}
\mc{Q}_-(\omega)\right),
\end{gathered}
\end{equation}
and for each $\gamma\in \Gamma$, $\gamma^*(\pl_{\la}\mc{P}_{\la_0}(\omega^{(0)}))=
\pl_{\la}\mc{P}_{\la_0}(\omega^{(0)})+\mc{P}_{\la_0}(\log(N_\gamma)\omega^{(0)})$.
Thus, since $\la_0\not=-\demi$, we can set 
\[\til{\varphi}_1:= -(1+2\la_0)^{-1}(\pl_{\la}\mc{P}_{\la_0}(\omega^{(0)})-\psi)\] 
and we get on $\hh^{2}$
\[ (\Delta_{\hh^2}+\la_0(1+\la_0))\til{\varphi}_1=\til{\varphi}_0, \;\; \textrm{ and } \forall \gamma\in \Gamma,\,\,  \gamma^*\til{\varphi}_1=\til{\varphi}_1.\] 
This means that $\til{\varphi}_1$ descends to a smooth function $\varphi_1$ on $M$, which satisfies 
the equation 
\begin{equation}\label{eq:laplace_Jordan}
(\Delta_M+\la_0(1+\la_0))\varphi_1=\varphi_0.
\end{equation}
In fact, since 
\[
\psi={\pi_0}_*(\Phi_-^{\la_0}\mc{Q}_-(\omega^{(1)})) = {\pi_0}_*(\til{u}^{(1)}+\log(\Phi_-)\til{u}^{(0)})= 
{\pi_0}_*(\til{u}^{(1)})+\pl_{\la}\mc{P}_{\la_0}(\omega^{(0)}), 
\]
we notice that $\varphi_1=(1+2\la_0)^{-1}{\pi_0}_*u^{(1)}$.
If $M$ is compact \eqref{eq:laplace_Jordan} can only hold if $\varphi_0=0$, 
since $\la_0(1-\la_0)\in\rr$ and thus 
\[ ||\varphi_0||_{L^2(M)}^2= \cjg(\Delta_M+\la_0(1+\la_0))\varphi_1,\varphi_0\cjd=0.
\]
But $\varphi_0=0$ implies $\omega^{(0)}=0$ by injectivity of $\mc{P}_{\la_0}$, and thus $u^{(0)}=0$, which leads to a contradiction and thus $u^{(1)}$ does not exist. 
We have thus proved the non-existence of Jordan blocks if $\la_0\not=-\demi$.  
It is also clear that, assuming now that there is no Jordan block, we 
can take the argument above and ignore the $u^{(1)},v^{(1)},\omega^{(1)},\til{\varphi}_1$ terms, it 
shows that $\varphi_0={\pi_0}_*u^{(0)}$ is in $\ker (\Delta_M+\la_0(1+\la_0))$ and 
is nonzero if $u^{(0)}\not=0$, 
and the map \eqref{pi*iso_cpt} is then injective.

\textbf{Case $\la_0\not\in -1/2\cup -\nn$, surjectivity.} To construct the reciprocal map, we proceed as follows: if 
$(\Delta_M+\la_0(1+\la_0))\varphi_0=0$ with $\varphi_0\not=0$, then by the 
surjectivity of $\mc{P}_{\la_0}$ in Lemma \ref{poissoniso}, there is 
$\omega^{(0)}\in \mc{D}'(\mathbb{S}^1)$
such that $\mc{P}_{\la_0}(\omega^{(0)})=\til{\varphi}_0$ where $\til{\varphi}_0$ 
is the lift of $\varphi_0$ to $\hh^{2}$. Using injectivity of $\mc{P}_{\la_0}$ and 
$\gamma^*\til{\varphi}_0=\til{\varphi}_0$ for all $\gamma\in \Gamma$, we get 
$\gamma^*\omega^{(0)}=N_\gamma^{-\la_0}\omega^{(0)}$. This implies that 
$\til{u}^{(0)}:=\Phi_-^{\la_0}\mc{Q}_-(\omega^{(0)})$ is $\Gamma$ invariant by 
\eqref{changeofPhi} and 
that $(X+\la_0)\til{u}^{(0)}=0$ and $U_-\til{u}^0=0$ by \eqref{Qpmiso}. Then 
$\til{u}^0$ descends to and element $u^0\in \mc{D}'(SM)$ satisfying $(X+\la_0)u^{(0)}=0$ 
and $U_-u^{(0)}=0$. Now the differential operator $U_-$ is elliptic outside 
$E_u^*\oplus E_0^*$ and $(X+\la_0)$ is elliptic outside 
$E_u^*\oplus E_s^*$,  thus ${\rm WF}(u^{(0)})\subset E_u^*$. Using the characterizaion
\eqref{eq:ruelle_res_WF_cond} this implies that 
$u^{(0)}$ is a Ruelle resonant state.

\textbf{Case $\la_0=-\demi$, Jordan block and kernel.} We will show that there can be a Jordan block of order $1$ but no 
Jordan blocks of order $2$. Assume there is a Jordan block of order $2$, i.e.  
\[
(X+\la_0)u^{(0)}=0 ,\quad (X+\la_0)u^{(1)}=u^{(0)}, \quad (X+\la_0)u^{(2)}=u^{(1)},
\]
for some $u^{(j)}\in V_0^{j+1}(\la_0)$. Define $\til{u}^{(j)}=\pi_{\Gamma}^*u^{(j)}$
their lifts to $S\hh^{2}$, let 
$v^{(0)}:=\Phi_-^{-\la_0}\til{u}^{(0)}$ and 
$v^{(1)}:=\Phi_-^{-\la_0}(\til{u}^{(1)}+\log(\Phi_-)\til{u}^{(0)})$ as before, and 
\[
v^{(2)}:= \Phi_-^{-\la_0}(\til{u}^{(2)}+\log(\Phi_-)\til{u}^{(1)}+\demi (\log \Phi_-)^2\til{u}^{(0)}).
\]
Then $Xv^{(j)}=0$, $U_-v^{(j)}=0$, and $\gamma^*v^{(j)}=N_\gamma^{-\la_0}\sum_{\ell=0}^j
\frac{(\log N_\gamma)^\ell}{\ell !}v^{(j-\ell)}
$ thus $v^{(j)}=\mc{Q}_-(\omega^{(j)})$ for some $\omega^{(j)}\in \mc{D}'(\mathbb{S}^1)$ satisfying 
$\gamma^*\omega^{(j)}=N_\gamma^{-\la_0}\sum_{\ell=0}^j\frac{(\log N_\gamma)^\ell}{\ell !}\omega^{(j-\ell)}$.
Set 
\[
\begin{gathered}
\til{\varphi}_0=\mc{P}_{\la_0}(\omega^{(0)})={\pi_0}_*\til{u}^{(0)}, \quad
\til{\varphi}_1:=\mc{P}_{\la_0}(\omega^{(1)})-\pl_{\la}\mc{P}_{\la_0}(\omega^{(0)})={\pi_0}_*\til{u}^{(1)},\\ 
\til{\varphi}_2:=\mc{P}_{\la_0}(\omega^{(2)})-\pl_{\la}\mc{P}_{\la_0}(\omega^{(1)})+\demi \pl_{\la}^2\mc{P}_{\la_0}(\omega^{(0)})={\pi_0}_*\til{u}^{(2)},\end{gathered}
\] 
then  using \eqref{deriveepoisson} we get $(\Delta_{\hh^2}-\tfrac{1}{4})\til{\varphi}_j=0$ for $j=0,1$ 
and $\gamma^*\til{\varphi}_j= \til{\varphi}_j$ for all $\gamma\in \Gamma$. As 
for the case $\la_0\not=-\demi$, 
$\til{\varphi}_j$ descend to smooth functions $\varphi_j$  on $M$ and, since 
$(\Delta_M-\tfrac{1}{4})\varphi_2=\varphi_0$, we see that $||\varphi_0||_{L^2(M)}=0$ 
if $\varphi_2$ is non-zero, leading to a contradiction. This shows that the Jordan 
block for $X$ at $\la_0=-\demi$ can be only of order $1$, and from the injectivity
of the Poisson-Helgason transformation we know that $\varphi_0\not=0$ is a non-zero 
element of $\ker (\Delta_M-\tfrac{1}{4})$. Thus the map \eqref{pi*iso2_cpt} is 
injective and the map \eqref{eq:pi_projection_cpt} has a kernel of dimension
less or equal to $\dim(\ker(\Delta_M-\tfrac{1}{4}))$.

\textbf{Case $\la_0=-\demi$, surjectivity.} Assume $\varphi^{(0)}\in \ker (\Delta_M-\tfrac{1}{4})$ is non zero, 
then by Lemma \ref{poissoniso} there is $\omega^{(0)}\in \mc{D}'(SM)$ such that
$\mc{P}_{-1/2}(\omega^{(0)})=\til{\varphi}_0$ where $\til{\varphi}_0$ is the lift 
of $\varphi_0$ on $\hh^{2}$. We use \eqref{Sen1/2} and  have the expansion as 
$t\to 0$ for each $\chi\in C^\infty(\mathbb{S}^1)$
\[
\int_{\mathbb{S}^1} \mc{P}_{-1/2}(\omega^{(0)})(\tfrac{2-t}{2+t}\nu)\chi(\nu)dS(\nu)=\sqrt{t}(-\sqrt{2}\log(t)\cjg \omega^{(0)},
\chi\cjd_{\mathbb{S}^1}+\cjg \omega^{(1)},\chi\cjd_{\mathbb{S}^1}+\mc{O}(t\log t))
\]
for some non-zero $\omega^{(1)}\in \mc{D}'(\mathbb{S}^1)$. Notice that if 
$t(x):=2\tfrac{1-|x|}{1+|x|}$ on $\hh^{2}$, 
we have $\gamma^*t(|x|\nu)=|d\gamma(\nu)|t(x)+\mc{O}(t(x)^2)$ as $|x|\to 1$ for 
each isometry $\gamma\in G$ and $\nu\in \mathbb{S}^1$.
Then, since  $\gamma^*\til{\varphi}_0=\til{\varphi}_0$ for each  $\gamma\in \Gamma$, 
we get 
\[
\gamma^* \omega^{(0)}=N_\gamma^{1/2}\omega^{(0)}, \quad 
\gamma^* \omega^{(1)}=N_\gamma^{1/2}\omega^{(1)}-\sqrt{2}N_\gamma^{1/2}(\log N_\gamma) \omega^{(0)}
\]
for each $\gamma\in \Gamma$.
Define $\til{u}^{(0)}:=\Phi_-^{-1/2}\mc{Q}_-(\omega^{(0)})$ and 
$\til{u}^{(1)}:=\Phi_-^{-1/2}\mc{Q}_-(\omega^{(1)})+\sqrt{2}(\log \Phi_-)\til{u}^{(0)}$, 
which are $\Gamma$-invariant and satisfy
\[
(X-\demi)\til{u}^{(0)}=0, \quad (X-\demi)\til{u}^{(1)}=\til{u}^{(0)}, \quad U_- \til{u}^{(0)}=U_-\til{u}^{(1)}=0. 
\] 
These distributions thus descend to $SM$ and are non-zero resonant and generalized 
resonant states since they also have wave-front sets in $E_u^*$ by the same argument 
as for $\la\not=-1/2$. In particular any element in $\ker(\Delta_M - \tfrac{1}{4})$
this produces a Jordan block of order $1$ and consequently the map \eqref{eq:pi_projection_cpt}
has a kernal of dimension $\dim\ker(\Delta_M-\tfrac{1}{4})$.

\textbf{Case $\la_0\in -\nn$, injectivity.} Let $\la_0=-n$ with $n\in\nn$.
First, if $u\in V_0^k(-n)$, we have 
$u_0\in \ker (\Delta_M+n(n-1))$. 
From the positivity of the Laplacian we get $u_0=0$ except possibly if $n=-1$ 
where $u_0$ must be constant. Let us show 
that in fact $u_0=0$ if $n=-1$: if $u\in V_0^1(-1)$ satisfies $u_0=1$, we 
have $2\eta_-u_1=1$ by \eqref{recursion}, but then 
\[0<||u_0||^2_{L^2}=2\cjg \eta_-u_1,u_0\cjd_{L^2}= -2\cjg u_1,\eta_+u_0\cjd_{L^2}=0
\]
leading to a contradiction. Thus $u_0=0$ for all $n$ if $u\in V_0^1(-n)$. Thus Proposition \ref{no_int_jord} implies that there are no Jordan 
Blocks for the Ruelle resonances. By the proof of Proposition \ref{no_int_jord}, the Fourier components $u_k$ of $u$ in the fibers satisfy $u_k=0$ for $|k|<n$.  By \eqref{recursion}
we have $\eta_-u_n=0$ and $\eta_+u_{-n}=0$, and thus by \eqref{dbar} we get 
$u_n\in H_n(M)$ and $u_{-n}\in H_{-n}(M)$. Furthermore note that $u_n=0$ implies
after iteratively applying \eqref{recursion}, that $u_k=0$ for all $k>n$, and similarly if $u_{-n}=0$.  The map \eqref{pi*iso3_cpt}
is thus injective.

\textbf{Case $\la_0\in -\nn$, surjecivity.}  Conversely, we want to prove that for each
$u_n\in H_n(M)$, there is a $u\in \mc{D}'(SM)$ so that 
$(X-n)u=0$ and $U_-u=0$. We construct $u$ as a formal sum $u=\sum_{k\geq n} u_k$ where 
$u_k$ are in $\pi_k^*(C^\infty(M;\mc{K}^k))$, that we will thus freely identify 
with sections of $\mc{K}^k$.
We set $u_k=0$ for all $k<n$ and we define recursively for $k>n$
\begin{equation}\label{recursive_def}
 u_{k} := \frac{2}{n-k}\eta_+ u_{k-1}
\end{equation}
and set $u_{-k}=\bbar{u_k}$. Clearly $u_k$ are smooth and in fact analytic since $u_n$ is. 
First let us show that the formal series $u=\sum_{k\in\zz} u_k$ fulfills 
$(X-n)u=0$ and $U_-u=0$.
According to Lemma~\ref{recurrence_relation} these conditions are equivalent 
to the fact that 
\begin{align}
2\eta_+ u_{k-1} &= (n- k)u_k \label{eq:raising_cond} \\
2\eta_- u_{k+1} &= (n+ k)u_k\label{eq:lowering_cond}
\end{align}
holds for all $k\in \zz$. We see, that (\ref{eq:raising_cond}) is already 
fulfilled for all $k\in \zz$ by our recursive definition of the Fourier modes 
via \eqref{recursive_def}. For $k < n-1$ also the condition 
\eqref{eq:lowering_cond} is fulfilled, as we have set all Fourier modes to be 
identically zero. For $k=n-1$ (\ref{eq:lowering_cond}) is true because we identified
$u_n$ with a holomorphic section in $\mc{K}^n$ and the case $k\geq n$
follows by induction from the following fact:
for each $\ell \in \nn_0$, if $2\eta_- u_{n+\ell} = (2n+\ell-1)u_{n+\ell-1}$ holds, then
 $2\eta_- u_{n+\ell+1} = (2n+\ell)u_{n+\ell}$.
This fact is a direct consequence of the commutation relation 
\begin{equation}
 \label{eta_pm_commut}
 [\eta_+,\eta_-] = -\frac{i}{2}V
\end{equation}
since
\[\begin{split}
 2\eta_-u_{n+\ell+1} &\underset{(\ref{recursive_def})}{=} -\frac{4}{\ell+1}\eta_-\eta_+ u_{n+\ell}
 \underset{(\ref{eta_pm_commut})}{=} -\frac{4}{\ell+1}(\eta_+\eta_- +\frac{i}{2} V) u_{n+\ell}\\
 &= -\frac{4}{\ell+1}\left(\frac{-\ell(2n+\ell-1)}{4} -\frac{(n+\ell)}{2}\right) u_{n+\ell}= (2n+\ell) u_{n+\ell}.
\end{split}\]
Next we need to show that the formal sum $u=\sum_k u_k$ defines a distribution, and it suffices to check 
that $||u_k||_{L^2}=\mc{O}( |k|^N)$ for some $N$ as $|k|\to \infty$.
Let us give an argument which is close to the approach of Flaminio-Forni \cite{FlFo}: 
let $k>n$ and consider
\[\begin{split}
 \|u_k\|^2_{L^2(SM)} =& \langle u_k,u_k\rangle_{L^2}=\left(\frac{2}{n-k}\right)\langle u_k,\eta_+u_{k-1}\rangle_{L^2} = -\left(\frac{2}{n-k}\right)\langle \eta_- u_k,u_{k-1}\rangle_{L^2}\\
 =& -\left(\frac{k+n-1}{n-k}\right)\langle u_{k-1},\eta_+u_{k-1}\rangle_{L^2}=\left(\frac{k+n-1}{k-n}\right)\|u_{k-1}\|^2_{L^2}.
\end{split}\]
Thus recursively we obtain for $\ell>0$ (here $\Gamma$ is the Euler Gamma function)
\[
 \|u_{n+\ell}\|^2_{L^2(SM)} = \Pi_{\ell,n}\|u_{n}\|^2_{L^2(SM)} \text{ where }\Pi_{\ell,n} = \frac{\Gamma(2n+\ell)}{\Gamma(\ell+1)\Gamma(2n)} =\prod_{r=1}^\ell \frac{2n-1 +r}{r}.
\]
Now it is direct to check that $\Pi_{\ell,n}=\mc{O}(\ell^{N})$ for some $N$. The same argument works with 
$u_{-n}$.
\end{proof}
Here notice that by Riemann-Roch theorem, the spaces $H_{n}(M)$ have complex dimension 
\begin{equation}\label{eq:dim_riemann_roch_cpt}
\dim H_{n}=\left\{\begin{array}{ll}
\demi(2n-1)|\chi(M)| & \textrm{ if }n>1\\
\demi |\chi(M)|+1 & \textrm{ if }n=1
\end{array}\right. 
\end{equation}
where $\chi(M)$ is the Euler characteristic of $M$.

To conclude this section, we describe the full Ruelle resonance spectrum of $X$ by using Theorem \ref{noninteger}.
\begin{corr}\label{otherbands}
Let $M=\Gamma\backslash \hh^{2}$ be a smooth oriented compact hyperbolic surface 
and let $SM$ be its unit tangent bundle. Then for each $\la_0$ with ${\rm Re}(\la_0)\leq 0$, 
$k\in\nn$, and $j\in\nn$, the operator $U_+^k$ is injective on $V_0^j(\la_0+k)$ if $\la_0+k\not=0$ and 
\[V_{k}^j(\la_0)=U_+^k(V_0^j(\la_0+k))\oplus V_{k-1}^j(\la_0).\]  
In other words, we get $V_{k}^j(\la_0)=\bigoplus_{\ell=0}^k U_+^\ell(V_0^j(\la_0+\ell)).$
\end{corr}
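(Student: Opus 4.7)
The plan is to exploit the $\mathfrak{sl}_2$-commutation relations $[U_+,U_-]=2X$, $[X,U_\pm]=\pm U_\pm$ together with the wavefront set characterization of Ruelle resonant states. The main computational input is a pair of operator identities, valid for every $u\in\ker U_-$:
\begin{align*}
\text{(i)}\quad & U_-^{k+1} U_+^k u = 0,\\
\text{(ii)}\quad & U_-^k U_+^k u = p_k(X)\,u,\quad \text{where } p_k(X) := (-1)^k k!\prod_{i=0}^{k-1}(2X+i).
\end{align*}
Both are proved by induction on $k$ using the commutator identity $[U_-, U_+^k] = -k\,U_+^{k-1}(2X+k-1)$ (derived from $[U_-,U_+]=-2X$ and the Leibniz rule) and the fact that $\ker U_-$ is stable under $X$, since $U_- X = X U_- + U_-$.

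Identity (i) shows that $U_+^k$ sends $V_0^j(\la_0+k)$ into $V_k^j(\la_0)$: the wavefront condition is preserved by the differential operator $U_+$, and the generalized eigenvalue condition comes from $(X+\la_0)^j U_+^k = U_+^k(X+\la_0+k)^j$. Identity (ii) is the key to both injectivity and the decomposition. The polynomial $p_k$ vanishes exactly when $\la_0+k\in\{0,\tfrac{1}{2},1,\dots,\tfrac{k-1}{2}\}$; apart from $\la_0+k=0$, every such point has ${\rm Re}(\la_0+k)>0$, where $V_0^j(\la_0+k)=0$ by Proposition~\ref{resolventcompact}. Hence whenever $V_0^j(\la_0+k)\neq 0$ and $\la_0+k\neq 0$, the scalar $p_k(-(\la_0+k))$ is nonzero, and $p_k(X) = p_k(-(\la_0+k))\,I + N$ with $N$ nilpotent is invertible on the finite-dimensional space $V_0^j(\la_0+k)$.

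Injectivity then follows at once: if $U_+^k u = 0$, then $p_k(X)u = U_-^k U_+^k u = 0$, forcing $u=0$. For the decomposition, given $u\in V_k^j(\la_0)$ set $w:=U_-^k u$. The commutation $X U_-^k = U_-^k(X-k)$ gives $(X+\la_0+k)^j w = U_-^k(X+\la_0)^j u = 0$, so $w\in V_0^j(\la_0+k)$. Setting $v := p_k(X)^{-1}w \in V_0^j(\la_0+k)$, we obtain $U_-^k(u-U_+^k v) = w - p_k(X)v = 0$, and hence $u-U_+^k v\in V_{k-1}^j(\la_0)$. Directness is symmetric: if $U_+^k v\in V_{k-1}^j(\la_0)$ for some $v\in V_0^j(\la_0+k)$, then applying $U_-^k$ yields $p_k(X)v = 0$, so $v=0$. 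The iterated formula $V_k^j(\la_0)=\bigoplus_{\ell=0}^k U_+^\ell(V_0^j(\la_0+\ell))$ then follows by induction on $k$.

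The only subtlety is the degenerate point $\la_0+k=0$, where $p_k(0)=0$. In this case Theorem~\ref{noninteger}(1) together with $\ker\Delta_M=\cc$ identifies $V_0^j(0)$ with the constants, which the derivation $U_+$ annihilates; thus $U_+^k(V_0^j(0))=0$ and the first summand of the decomposition is trivial. To see that nevertheless $V_k^j(-k)=V_{k-1}^j(-k)$, note that for $u\in V_k^j(-k)$ the distribution $U_-^k u\in V_0^j(0)$ equals some constant $c$; pairing with the constant test function $\mathbf{1}$ and using $U_-$-invariance of the Liouville measure yields $c\,\Vol(SM) = \cjg U_-^k u,\mathbf{1}\cjd = (-1)^k \cjg u, U_-^k\mathbf{1}\cjd = 0$, forcing $c=0$.
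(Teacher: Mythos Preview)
Your proof is correct and follows essentially the same route as the paper: both compute $U_-^kU_+^k$ on $\ker U_-$ via the $\mathfrak{sl}_2$ commutation relations to obtain the polynomial $p_k(X)=k!\prod_{\ell=1}^k(-2X-k+\ell)$, and then use that this operator is invertible on $V_0^j(\la_0+k)$ when $\la_0+k\neq 0$ to get both injectivity of $U_+^k$ and the direct sum decomposition. Two minor presentational differences are worth noting. First, where the paper proves invertibility of $p_k(X)$ by induction on $j$ (reducing to the scalar case $j=1$), you instead write $p_k(X)=p_k(-(\la_0+k))I+N$ with $N$ nilpotent and invoke invertibility of scalar-plus-nilpotent; these are equivalent. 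Second, you include the degenerate case $\la_0+k=0$ inside the proof, whereas the paper treats it in the paragraph following the corollary; your pairing argument $\langle U_-^k u,\mathbf 1\rangle=(-1)^k\langle u,U_-^k\mathbf 1\rangle=0$ (using that the horocycle flow preserves the Liouville measure, so $U_-^*=-U_-$) is a clean variant of the paper's $\|w\|_{L^2}^2$ computation and yields the same conclusion $V_k^j(-k)=V_{k-1}^j(-k)$.
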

\begin{proof}
First, we have $U_+^k(V_0^j(\la_0+k))\subset V_{k}^j(\la_0)$ since 
\begin{equation}\label{relationk=0} 
(X+\la_0)^jU_+^k=U_+(X+\la_0+1)^jU_+^{k-1}=\dots=U^k_+(X+\la_0+k)^j
\end{equation}
and similarly $U_-^k$ maps $V_{k}^j(\la_0)$ to $V_0^j(\la_0+k)$ 
with kernel $V_{k-1}^j(\la_0)$, 
so that it remains to show that the map
\[ U_-^kU_+^k: V_0^j(\la_0+k)\to V_0^j(\la_0+k)\]
is one-to-one. Of course, since $V_0^j(\la_0+k)=0$ if ${\rm Re}(\la_0)+k>0$, it suffices to assume that 
${\rm Re}(\la_0)\leq -k$.
But a direct computation using $[U_+,U_-]=2X$ gives
\[ U_-^kU_+^k=k!\prod_{\ell=1}^k(-2X-k+\ell) \textrm{ on }V_0^j(\la_0+k).\]
When $j=1$, $X=-(\la_0+k){\rm Id}$ on $V_0^1(\la_0+k)$, thus
\[\prod_{\ell=1}^k(-2X-k+\ell)=\prod_{\ell=1}^k(2\la_0+k+\ell){\rm Id} \textrm{ on }V_0^1(\la_0+k).\]
This is invertible if $\la_0\not=-k$ since we know that ${\rm Re}(\la_0)+k\leq 0$. Now we can do an induction for 
$j>1$: assume that $\ker U_-^kU_+^k\cap V_0^{j-1}(\la_0+k)=0$, then 
using $(X+\la_0)U_-^kU_+^k=U_-^kU_+^k(X+\la_0)$, 
we see that if $u\in V_0^j(\la_0+k)\cap\ker U_-^kU_+^k$, 
$(X+\la_0)u\in V_0^{j-1}(\la_0+k)\cap \ker U_-^kU_+^k=0$ and this completes the argument when $\la_0+k\not=0$.  
\end{proof}
This Theorem describes the full Ruelle resonance spectrum in terms of the resonances 
associated to resonant states in $\ker U_-$. Indeed the only case which is not obvious is when $\la_0=-n\in-\nn$, and when we want to compute 
$V_{n+k}^j(-n)$ for $k\geq 0$. If $u\in V_{n}^j(-n)$, then $w:=U_-^nu$ is in 
$V_0^j(0)$ which is non trivial only if $j=1$, in which case $w={\rm constant}$. But 
then $||w||_{L^2}^2=(-1)^n\cjg U_-^{2n}w,u\cjd=0$ and we deduce that 
$V_{n}^j(-n)=V_{n-1}^j(-n)$ and $V_{n+k}^j(-n)=V_{n-1}^j(-n)$ for all $k\geq 0$. 

\section{Ruelle resonances for convex co-compact quotients}

In this section, we consider the case of a convex co-compact subgroup $\Gamma\subset G$ 
of isometries of $\hh^{2}$. 

\subsection{Geometry and dynamics of convex co-compact surfaces}\label{cccsurface}

The manifold $M=\Gamma\backslash \hh^{2}$ is a non-compact complete smooth hyperbolic 
manifold with infinite volume but finitely many topological ends. 
Moreover, $M$ can be compactified to the smooth manifold 
$\bbar{M}=\Gamma\backslash (\hh^2\cup \Omega_\Gamma)$, if 
$\Omega_\Gamma\subset \mathbb{S}^1$ is the set of discontinuity of $\Gamma$. As 
in Section \ref{sec:quotients}, we will denote by $\Lambda_\Gamma\subset \mathbb{S}^1$ 
the limit set of $\Gamma$.
In fact, $M$ is conformally compact in the sense of Mazzeo-Melrose \cite{MaMe}: 
there is a smooth boundary defining function $\rho$ such that $\bar{g}:=\rho^2g$ 
extends as a smooth metric on $\bbar{M}$. 

The group $\Gamma$ is a subgroup of ${\rm PSL}_2(\cc)$ and acts on the Riemann sphere 
$\overline \cc := \cc\cup\{\infty\}$ as conformal 
transformations, it preserves the unit disk $\hh^2$ and its complement $\overline \cc\setminus \hh^2$. 
Equivalently, by conjugating by $(z-i)/(z+i)$, $\Gamma$ acts by conformal transformations 
on $\overline \cc$ as a subgroup of ${\rm PSL}_2(\rr)\subset {\rm PSL}_2(\cc)$ and it preserves 
the half-planes $\hh^2_\pm:=\{z\in \cc; \pm{\rm Im}(z)>0\}$. The half-planes are conformally 
equivalent through $z\mapsto \bar{z}$ if we put the opposite orientation on $\hh^2_+$ and $\hh^2_-$. 
In this model the boundary is the compactified real line $\partial \hh_\pm= \overline \rr := \rr\cup\{\infty\}$
and the limit set is a closed subset $\Lambda_\Gamma$ of $\overline \rr$, and 
its complement in $\overline \rr$ is 
still denoted by $\Omega_\Gamma$. Since $\bbar{\gamma(\bar{z})}=\gamma(z)$ for 
each $\gamma\in \Gamma$, the quotients $M_+:=\Gamma\backslash (\hh^2_+\cup \Omega_\Gamma)$ 
and $M_-:=\Gamma\backslash(\hh^2_-\cup \Omega_\Gamma)$ 
are smooth surface with boundaries, equipped with a natural conformal structure 
and $M_+$ is conformally equivalent to $M_-$. The surface 
$\Gamma\backslash (\overline \cc\setminus \Lambda_\Gamma)$ is a compact surface 
diffeomorphic to the gluing 
$M_2:=M_+\cup M_-$ of $M_+$ and $M_-$ along their boundaries, moreover it is 
equipped with a smooth conformal structure which restricts to that of $M_\pm$. 

We denote by $\mc{I}:M_2\to M_2$ the involution 
fixing $\pl\bbar{M}$ and derived from $z\mapsto \bar{z}$ when viewing $\Gamma$ 
as acting in $\overline \cc\setminus \Lambda_\Gamma$.  
The interior of $M_+$ and $M_-$ are isometric if we put the hyperbolic metric 
$|dz|^2/({\rm Im}(z))^2$ on $\hh^2_\pm$, and they are isometric to the hyperbolic 
surface we called $M$ above. The conformal class of $M_\pm$ corresponds to the 
conformal class of $\bar{g}$ on $\bbar{M}$ as defined above. We identify $M_+$ 
with $\bbar{M}$ and define $H_{\pm n}(M)$ as the finite dimensional real vector spaces 
\begin{equation}\label{HnH-n}
\begin{gathered}
 H_n(M):=\{ f|_{M_+}; f\in C^\infty(M_2; \mc{K}^n),\, \bbar{\pl}f=0, \, \mc{I}^*f=\bar{f}\}, \\
 H_{-n}(M):=\{ f|_{M_+}; f\in C^\infty(M_2; \mc{K}^{-n}),\, \pl f=0,\, \mc{I}^*f=\bar{f}\}.
\end{gathered} 
\end{equation}
Note that $f\in H_n(M)$ is equivalent to say that $\bbar{\pl}f=0$ with $\iota_{\pl \bbar{M}}^*f$ real-valued, if $\iota_{\pl \bbar{M}}: \pl \bbar{M}\to \bbar{M}$ is the inclusion map and $\iota_{\pl \bbar{M}}^*f$ is the 
symmetric tensor on $\pl \bbar{M}$ defined by 
\[ \forall (x,v)\in T\pl \bbar{M}, \,\, \iota_{\pl \bbar{M}}^*f(x)(\otimes^mv)=f(\iota_{\pl\bbar{M}}(x))(\otimes^m d\iota_{\pl\bbar{M}}.v)\]
where $T\pl \bbar{M}$ is the real tangent space of $\pl\bbar{M}$. 

The dimension of $H_{\pm n}$ can be computed as follows: 
let 
\[H_n(M_2):=\{f\in C^\infty(M_2; \mc{K}^n),\, \bbar{\pl}f=0\}\] 
which can be viewed as complex and real vector space, with complex
dimension that can be calculated by the Riemann-Roch theorem (\ref{eq:dim_riemann_roch_cpt})
and the fact that $\chi(M_2) = 2\chi(M)$. Now let $A: H_n(M_2)\to H_n(M_2)$ be the map 
$Af=\bbar{\mc{I}^*f}$ satisfying $A^2={\rm Id}$. We have $H_n(M)=\ker (A-{\rm Id})$ as real vector spaces.
The map $f\mapsto if$ is an isomorphism of real vector spaces from $H_n(M)$ 
to $\ker (A+{\rm Id})\subset H_n(M_2)$ and thus we deduce that 
the real dimension of $H_n(M)$ equals the complex dimension of $H_n(M_2)$ and we get 
\begin{equation}\label{dimreHn}
\dim_{\rr}H_n(M)=\left\{\begin{array}{ll}
(2n-1)|\chi(M)| & \textrm{ if }n>1,\\
|\chi(M)|+1 & \textrm{ if }n=1.
\end{array}\right.\end{equation}
for a non-elementary group. For an elementary group, $\chi(M)=0$ and $H_n(M)$ has real dimension equal to $1$ ($M_2$ is a flat torus).
 
By \cite{Gr}, there exists a collar near $\pl \bbar{M}$ and a diffeomorphism 
$\psi: [0,\eps)_r\x \pl \bbar{M}\to \psi([0,\eps)\x \pl \bbar{M})\subset \bbar{M}$ such that 
$\psi^*g=\frac{dr^2+h(r)}{r^2}$ where $r\mapsto h(r)$ is a smooth family of metrics on $\pl\bbar{M}$.
We can choose $\rho=r\circ \psi^{-1}$ as boundary defining function. It is then clear that the 
hypersurfaces $\rho=\rho_0$ are strictly convex if $0<\rho_0<\eps$ is small enough, and therefore 
there exists a geodesically convex compact domain $Q\subset M$ with smooth boundary 
$\pl Q=\psi(\{\rho_0\}\x\pl\bbar{M})$. Each geodesic $(x(t))_{t\in [0,t_0]}$ in $Q$ with 
$x(t_0)\in \pl Q$ can be extended to a geodesic $(x(t))_{t\in[0,\infty)}$ so that 
$x(t)\in M\setminus Q$ for all $t>t_0$ and 
$x(t)\to \pl \bbar{M}$ as $t\to +\infty$. The surface $M$ is of the form 
$M=N\cup(\cup_{i=1}^{n_f} F_i)$ where $N$ is a compact hyperbolic surface with geodesic boundary, 
called the \emph{convex core}, and $F_1,\dots, F_{n_f}$ are $n_f$ ends isometric to funnels 
(se e.g. \cite[Section 2.4]{Bo})
\begin{equation}\label{ends} 
(F_i,g) \simeq [0,\infty)_t\x (\rr/\ell_i \zz)_\theta \textrm{ with metric } dt^2+\cosh(t)^2d\theta^2
\end{equation} 
for some $\ell_i>0$ corresponding to the length of the geodesic of $N$ where $F_i$ is attached. The boundary 
of the compactification is $\pl\bbar{M}=\cup_{i=1}^{n_e} S_i$ where $S_i:=\rr/\ell_i \zz$.
We will now choose the function $\rho$ to be equal to $\rho=2e^{-t}$ in $F_i$ so that 
$\bbar{g}|_{S_i}=d\theta^2$. In the $(\rho,\theta)$ coordinates, the metric in $F_i$ is given by $g=\rho^{-2}(d\rho^2+(1+\rho^2/4)^2d\theta^2)$.
 Using the $(\rho,\theta)$ coordinates, we say that a function $f\in C^\infty(\bbar{M})$ is \emph{even} if in each 
$F_i$, 
\[
\forall k\in\nn_0,\,  \pl_{\rho}^{2k+1}f|_{\rho=0}=0
\] 
and we denote by $C^\infty_{\rm ev}(\bbar{M})$ the space of even functions. Note that
\begin{equation}\label{even}
C^\infty_{\rm ev}(\bbar{M})=\{ f|_{M_+}; f\in C^\infty(M_2), \mc{I}^*f=f\}
\end{equation}
after identifying $\bbar{M}$ and $M_+$, and if $\mc{I}$ is the involution on $M_2$ 
defined above. We refer to \cite{Gu1} for detailed discussions about even 
metrics and functions on asymptotically hyperbolic manifolds.

The incoming (-) and outgoing (+) tails $\Lambda_\pm \subset SM$ of the flow are the sets 
\[
\Lambda_\pm:=\{ (x,v)\in SM; \exists t_0\geq 0,\; \varphi_{\mp t}(x,v)\in Q, \forall t\in [t_0,\infty) \}.
\]
In view of the property of $Q$, the set  $\Lambda_\pm$ is also the set of points 
$(x,v)$ such that $\pi_0(\varphi_{\mp t}(x,v))$ does not tend to $\pl \bbar{M}$ in 
$\bbar{M}$ as $t\to +\infty$. The trapped set $K\subset Q$ is the closed 
flow-invariant set  
\[K:= \Lambda_+\cap \Lambda_-.\]
It is a direct observation that, if $B_\pm:S\hh^{2}\to \mathbb{S}^1$ are the 
endpoint maps defined in Section \ref{hyperbolicspace} and 
$\pi_\Gamma:S\hh^2 \to SM$ the covering map defined in Section \ref{sec:quotients},  then
\begin{equation}\label{Lambada}
\Lambda_\pm=\pi_\Gamma(\til{\Lambda}_\pm),\textrm{ where }\til{\Lambda}_\pm:=B_\mp^{-1}(\Lambda_\Gamma).
\end{equation}
Indeed, a point $(x,v)\in SM$ is in $\Gamma_-$ if and ony if it is trapped in the future, meaning that $\varphi_t(x,v)\not\to \pl\bbar{M}$ as $t\to +\infty$. By lifting to the covering $S\hh^2$ and taking $(\tilde{x},\tilde{v})\in S\hh^2$ so that $\pi_\Gamma(\tilde{x},\tilde{v})=(x,v)$, this means that the semi-geodesic starting at $\tilde{x}$ tangent to $\tilde{v}$ does not have endpoint in $\Omega_\Gamma$, i.e, $B_+(\tilde{x},\tilde{v})\in \Lambda_\Gamma$.
The bundle $T(SM)$ has a continuous (in fact smooth in our case) flow-invariant 
splitting \eqref{anosov} and we set $E_0^*,E_u^*,E_s^*$ the dual splitting 
defined by \eqref{dualspl}.
We define their restriction to the incoming/outgoing tails by 
\begin{equation}\label{E_pm^*}
E_+^* := E_u^*|_{\Lambda_+} ,\quad E_-^*:=E_s^*|_{\Lambda_-}.
\end{equation}

\subsection{Ruelle resonances and generalized resonant states}

To define Ruelle resonances and resonant states, we first need to recall the 
following result from \cite{DyGu}.
\begin{theo}\label{dygu}
If $M=\Gamma\backslash\hh^{2}$ is a convex co-compact hyperbolic surface, then 
the generator $X$ of the geodesic flow on $SM$ has a resolvent 
$R_X(\la):=(-X-\la)^{-1}$ that admits a meromorphic extension from 
$\{\la\in \cc; {\rm Re}(\la)>0\}$ to 
$\cc$ as a family of bounded operators $C_c^\infty(SM)\to \mc{D}'(SM)$. The 
resolvent $R_X(\la)$ has finite rank polar part at each pole $\la_0$ and the 
polar part is of the form 
\[ -\sum_{j=1}^{J(\la_0)} \frac{(-X-\la_0)^{j-1}\Pi^X_{\la_0}}{(\la-\la_0)^{j}}, \quad J(\la_0)\in \nn \]
for some finite rank projector $\Pi^X_{\la_0}$ commuting with $X$. Moreover 
$u\in \mc{D}'(SM)$ is in the range 
of $\Pi^X_{\la_0}$ if and only if $(X+\la_0)^{J(\la_0)}u=0$ with $u$ supported in 
$\Lambda_+$ and ${\rm WF}(u)\subset E_+^*$.  
\end{theo}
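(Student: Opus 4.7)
The plan is to adapt the microlocal framework of Faure–Sjöstrand for Anosov flows on compact manifolds to the convex co-compact setting by combining it with a complex absorbing potential supported away from the trapped set. The starting point is the representation
\[
R_X(\lambda)f = -\int_0^\infty e^{-\lambda t}\, f\circ\varphi_{-t}\, dt,\qquad f\in C_c^\infty(SM),
\]
valid for $\Re\lambda>0$, which identifies $R_X(\lambda)$ with a bounded operator on $L^2(SM)$ in this half-plane. To extend it, I first embed $SM$ into a compact manifold $S\widetilde{M}$ by gluing a collar across $\pl\bbar{M}$, extend the flow $\varphi_t$ smoothly, and choose a nonnegative potential $Q\in C^\infty(S\widetilde{M})$ vanishing on a neighborhood of the convex core and elliptic near $\pl S\widetilde{M}$. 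The Faure–Sjöstrand construction then supplies an order function $m\in C^\infty(T^*(S\widetilde{M})\setminus 0)$ which is negative near the radial source $E_u^*$, positive near the radial sink $E_s^*$, and strictly decreasing along the Hamilton flow of the principal symbol of $-iX$.

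The second step is to show that on the resulting anisotropic Sobolev space $\mc{H}^{Nm}$, the operator $-X-\lambda-iQ:\mc{H}^{Nm}\to \mc{H}^{Nm}$ is Fredholm of index zero for $\Re\lambda>-N$. The essential analytic ingredients are propagation of singularities along the Hamilton flow, radial source/sink estimates at $E_u^*$ and $E_s^*$ (whose threshold condition pins $N$ to the order function), and ellipticity of $iQ$ in the extension region. Analytic Fredholm theory then produces a meromorphic family of inverses; removing the perturbation $iQ$ by the standard argument that its contribution is compact on the trapped region delivers the meromorphic extension of $R_X(\lambda)$ from $C_c^\infty(SM)$ to $\mc{D}'(SM)$, with finite-rank polar part of the stated Jordan form.

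The third step is the characterization of ${\rm Ran}\,\Pi^X_{\la_0}$. The wavefront condition $\WF(u)\subset E_u^*$ is built into the construction: any element of the range lies in $\mc{H}^{Nm}$, whose elements are microlocally controlled so that their wavefront sets can only accumulate on the source $E_u^*$, and propagation of singularities then rules out wavefront outside $E_+^*=E_u^*|_{\Lambda_+}$. The support condition $\supp(u)\subset \Lambda_+$ is inherited from the forward-time representation: for $\Re\lambda$ large, $R_X(\lambda)f$ is supported in $\bigcup_{t\geq 0}\varphi_{-t}(\supp f)$, and orbits leaving every compact set in the past contribute only holomorphic terms to $R_X(\lambda)f$ via an integration by parts in $t$, so that every residue must in fact be supported in $\Lambda_+$. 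The converse — that every $u$ with $(X+\la_0)^{J(\la_0)}u=0$, $\WF(u)\subset E_+^*$, and $\supp(u)\subset\Lambda_+$ lies in the range — is obtained by pairing such $u$ against the adjoint resolvent and using that its wavefront and support data single it out as a residue.

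The main obstacle is coordinating the radial estimates at $E_{u,s}^*$ with the funnel geometry near $\pl\bbar{M}$: the cutoff $Q$ and the weight $m$ must be chosen so that the escape properties of the symplectic lift of $-X$ survive uniformly up to $\pl S\widetilde{M}$, that the smooth extension of $\varphi_t$ across the collar does not introduce spurious resonances in $\Re\lambda>-N$, and that the location of the poles and the residue operator are independent of the auxiliary data $(Q,m,N)$. Verifying this invariance, together with the radial estimates in the presence of the boundary, is the technical heart of the construction.
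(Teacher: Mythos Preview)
The paper does not give a proof of this theorem: it is stated as a result \emph{recalled} from \cite{DyGu} (Dyatlov--Guillarmou), and is used as a black box to define Ruelle resonances in the convex co-compact setting. So there is no ``paper's own proof'' to compare against.

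That said, your sketch is a fair outline of the strategy actually carried out in \cite{DyGu}: embed the open system into a compact one with a complex absorbing operator supported away from the trapped set, build anisotropic Sobolev spaces via an escape function as in Faure--Sj\"ostrand, use radial source/sink estimates together with propagation of singularities and the absorption to obtain the Fredholm property, and then invoke analytic Fredholm theory. The characterization of ${\rm Ran}\,\Pi^X_{\la_0}$ via the wavefront and support conditions is likewise the content of \cite{DyGu}. A couple of points where your sketch is imprecise: the absorber $Q$ in \cite{DyGu} is a pseudodifferential operator (microlocally elliptic where needed), not merely a smooth potential; and the passage from the modified resolvent $(-X-\la-iQ)^{-1}$ back to $R_X(\la)$ acting $C_c^\infty(SM)\to\mc{D}'(SM)$ is not quite ``removing a compact perturbation'' but rather restricting to test functions supported in the region where $Q=0$ and showing the result is independent of $Q$. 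These are refinements rather than gaps. If your intention was to reproduce the proof here, you would need to fill in those details; if your intention was only to indicate why the theorem holds, the sketch is adequate and aligned with the cited source.
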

We define \emph{Ruelle resonance},  \emph{generalized Ruelle resonant state} and 
\emph{Ruelle resonant state} as respectively a pole $\la_0$ of $R_X(\la)$, an 
element in ${\rm Im}(\Pi^X_{\la_0})$ and an element in 
${\rm Im}(\Pi^X_{\la_0})\cap \ker(-X-\la_0)$.
Define like in \eqref{Resk} and \eqref{Vm} the spaces 
\begin{equation}\label{Resk2}
{\rm Res}^j_X(\la_0):=\{ u\in \mc{D}'(SM) ; {\rm supp}(u)\subset \Lambda_+, \, 
{\rm WF}(u)\subset E_u^*,\,  (-X-\la_0)^ju=0\},
\end{equation}
\begin{equation}\label{Vm2}
V_m^j(\la_0):=\{ u\in {\rm Res}_X^j(\la_0); U_-^{m+1}u=0\}.
\end{equation}
As in the compact case, the operator $(-X-\la_0)$ is nilpotent on the finite dimensional space 
${\rm Res}_X(\la_0):=\cup_{j\geq 1}{\rm Res}^j_X(\la_0)$ and 
$\la_0$ is a Ruelle resonance if and only if ${\rm Res}_X^1(\la_0)\not=0$. The presence of Jordan blocks
for $\la_0$ is equivalent to having ${\rm Res}_X^k(\la_0)\not={\rm Res}_X^1(\la_0) $ for some $k>1$.

We make the important remark for what follows that when $\la_0\in\rr$, $V_m^j(\la_0)$
can be considered both as a real and as a complex vector space
which admits a basis of real-valued distributions, and its dimension is the number of elements of the basis.

\subsection{Quantum resonances and scattering operator} 
Scattering theory on these surfaces has been largely developped by Guillop\'e-Zworski \cite{GuZw,GuZw2} 
and a comprehensive description is given in the book of Borthwick \cite{Bo}. 
Quantum resonances are resonances of the Laplacian $\Delta_M=d^*d$ on 
$M=\Gamma\backslash \hh^{2}$, and these are defined as poles of the meromorphic 
extension of the resolvent of $\Delta_M$. The essential spectrum for $\Delta_M$ 
is $[1/4,\infty)$ and the natural resolvent to consider is $R_{\Delta}(s):=(\Delta_M-s(1-s))^{-1}$ 
which is meromorphic 
in ${\rm Re}(s)>1/2$ as a family of bounded operators on $L^2(M)$, with finitely 
many poles at 
\[
\sigma(\Delta_M):=\{s \in(\demi,1); \ker_{L^2}(\Delta_M-s(1-s))\not=0\}
\] 
corresponding to the $L^2$-eigenvalues in $(0,1/4)$. By usual 
spectral theory, each pole at such $s_0$ is simple and the 
residue is 
\[ {\rm Res}_{s=s_0}(R_{\Delta}(s))=\frac{\Pi^{\Delta}_{s_0}}{(2s_0-1)}\]
where $\Pi^{\Delta}_{s_0}$ the orthogonal projector on $\ker_{L^2}(\Delta_M-s_0(1-s_0))$; 
see \cite[Lemma 4.8]{PaPe} for example. The meromorphic extension of the resolvent 
was proved in \cite{MaMe,GuZw}, we now recall this result:
\begin{theo}\label{mame}
If $M=\Gamma\backslash\hh^{2}$ is a convex co-compact hyperbolic surface, 
then the non-negative Laplacian $\Delta_M$ 
on $M$ has a resolvent $R_{\Delta}(s):=(\Delta_M-s(1-s))^{-1}$ that admits a 
meromorphic extension from 
$\{s\in \cc; \,{\rm Re}(s)>1/2\}$ to 
$\cc$ as a family of bounded operators $C_c^\infty(M)\to C^{\infty}(M)$. 
The resolvent $R_{\Delta}(s)$ has finite rank polar part at the poles and the polar part at 
$s_0\not=1/2$ is of the form 
\[
\sum_{j=1}^{J(s_0)} \frac{(\Delta_M-s_0(1-s_0))^{j-1}\Pi^{\Delta}_{s_0}}{(s(1-s)-s_0(1-s_0))^j},\quad J(s_0)\in \nn, 
\]
for some finite rank operator $\Pi^{\Delta}_{s_0}:=(1-2s_0){\rm Res}_{s_0}(R_{\Delta}(s))$ 
commuting with $\Delta_M$. For $s_0=1/2$,  $R_{\Delta}(s)$ has a pole of order 
at most $1$ at $s_0$, $\Pi^{\Delta}_{s_0}:={\rm Res}_{s_0}(R_{\Delta}(s))$ has 
finite rank and $(\Delta_M-1/4)\Pi^{\Delta}_{s_0}=0$.
The operator 
$R_{\Delta}(s)$ is bounded as a map $\rho^NL^2(M)\to \rho^{-N}L^2(M)$ if $N>|{\rm Re}(s)-1/2|$.
\end{theo}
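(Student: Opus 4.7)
The plan is to leverage the spectral theorem for $\Delta_M$ on $L^2(M)$, which gives $R_\Delta(s)$ as a bounded operator for $\Re(s)>1/2$ away from the finitely many points of $\sigma(\Delta_M)$, and then to extend meromorphically via a parametrix construction followed by analytic Fredholm theory. Using the decomposition $M=N\cup(\cup_{i=1}^{n_f}F_i)$ from Section \ref{cccsurface}, I would build a parametrix $Q(s):C_c^\infty(M)\to C^\infty(M)$ by gluing a standard elliptic parametrix on the compact convex core $N$ with the explicit $\hh^{2}$-resolvent kernel (a hypergeometric function of $\cosh d_{\hh^2}(x,y)$ with boundary behavior $\sim \rho(x)^s\rho(y)^s$) lifted to each funnel $F_i$ via the local covering. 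This $Q(s)$ depends holomorphically on $s\in\cc$ and satisfies $(\Delta_M-s(1-s))Q(s)=I+K(s)$ where $K(s)$ is a holomorphic family of compact operators on the weighted spaces $\rho^N L^2(M)$.

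Analytic Fredholm theory then yields a meromorphic family $(I+K(s))^{-1}$ with finite rank polar parts, and $R_\Delta(s):=Q(s)(I+K(s))^{-1}$ agrees with the spectral resolvent on $\Re(s)>1/2$ and gives the desired meromorphic extension to $\cc$. To obtain the structure of the polar part at $s_0\not=1/2$, I would combine the Laurent expansion with $(\Delta_M-s(1-s))R_\Delta(s)=I$: since $s(1-s)-s_0(1-s_0)=(s_0-s)(s+s_0-1)$ has a simple zero at $s_0$, the map $s\mapsto z:=s(1-s)$ is a local biholomorphism near $s_0$, and matching coefficients shows that $\Delta_M-s_0(1-s_0)$ is nilpotent on the range of $\Pi^\Delta_{s_0}$, giving the claimed form of the Laurent expansion in the spectral variable $z$. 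The bound on the pole at $s_0=1/2$ comes from $s(1-s)-1/4=-(s-1/2)^2$ vanishing to order two: assuming a pole of order $J\geq 2$ in $s$, expansion of $(\Delta_M-1/4+(s-1/2)^2)R_\Delta(s)=I$ would force a non-trivial Jordan chain for $\Delta_M-1/4$ on the top residue, which is incompatible with the self-adjointness of $\Delta_M$, so $J\leq 1$ and $(\Delta_M-1/4)\Pi^\Delta_{1/2}=0$.

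Finally, the weighted mapping property $R_\Delta(s):\rho^N L^2(M)\to \rho^{-N}L^2(M)$ for $N>|\Re(s)-1/2|$ would follow by inspecting the boundary asymptotics of the Schwartz kernel of $R_\Delta(s)$: the hyperbolic resolvent kernel used in building $Q(s)$ behaves like $\rho(x)^s$ (or $\rho(x)^{1-s}$) at the boundary components, and pairing this against the weight $\rho^{-N}$ is square-integrable exactly when $N>|\Re(s)-1/2|$, while the Fredholm correction $(I+K(s))^{-1}$ preserves these weighted spaces away from the poles. The main obstacle is precisely the parametrix construction together with the compactness analysis of $K(s)$ on weighted $L^2$ spaces, and the uniform control of $(I+K(s))^{-1}$ on compact subsets of $\cc$; this is the technical core of the Mazzeo-Melrose $0$-calculus \cite{MaMe} and the alternative hyperbolic-geometry approach of Guillop\'e-Zworski \cite{GuZw}, which I would invoke for these steps rather than redevelop.
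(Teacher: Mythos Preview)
The paper does not actually prove this theorem: it is stated as a citation of results from \cite{MaMe,GuZw} (see the sentence immediately preceding the statement, ``The meromorphic extension of the resolvent was proved in \cite{MaMe,GuZw}, we now recall this result''), and no proof is given. Your sketch is a correct outline of the standard parametrix-plus-analytic-Fredholm argument underlying those references, and you explicitly acknowledge invoking them for the technical core, so in that sense your proposal is aligned with what the paper does, namely defer to the literature.

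One caveat on your argument for the pole at $s_0=1/2$: appealing to self-adjointness of $\Delta_M$ on $L^2$ to rule out Jordan chains is not quite rigorous as stated, since the generalized resonant states at the bottom of the essential spectrum need not lie in $L^2(M)$. The actual mechanism (see e.g.\ \cite[Lemma 4.9]{PaPe}, which the paper invokes in the proof of Proposition \ref{caractreson}) involves a more careful boundary-pairing or limiting-absorption argument specific to the threshold point. Your intuition is right, but the justification requires this extra ingredient.
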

The main theorem of \cite{MaMe} shows in addition that the Schwartz kernel 
$R_{\Delta}(s;x,x')$ of $R_{\Delta}(s)$ is of the form
\begin{equation}\label{schwartzker}
(\rho(x)\rho(x'))^{-s}R_{\Delta}(s;x,x') \in C^\infty(\bbar{M}\x\bbar{M} \setminus {\rm diag}),
\end{equation}
where ${\rm diag}$ denotes the diagonal of $\bbar{M}$. Moreover, if $f\in C_c^\infty(M)$, 
$u_s:=\rho^{-s}R_{\Delta}(s)f\in C^\infty(\bbar{M})$ is a meromorphic family in
$s\in\cc$, and since the Laplacian $\Delta_M$ in each funnel is given by 
\begin{equation}\label{DeltaM}
\Delta_M=-(\rho\pl_\rho)^2+\frac{1-\rho^2/4}{1+\rho^2/4}\rho\pl_\rho -\frac{\rho^2}{(1+\rho^2/4)^2}\pl^2_\theta,
\end{equation}
a series expansion of $u_s$ in powers of $\rho$ near $\pl\bbar{M}$ directly shows that $u_s\in 
C_{\rm ev}^\infty(\bbar{M})$. Therefore, for each pole $s_0$ of order $j\geq 1$, we get 
\begin{equation}\label{residueexp}
\varphi \in {\rm Ran}(\Pi^{\Delta}_{s_0}) \Rightarrow \varphi\in\bigoplus_{k=0}^{j-1}\rho^{s_0}\log(\rho)^kC^\infty_{\rm ev}(\bbar{M}).
\end{equation}
Indeed, it suffices to consider the residue of $\rho^su_s=R_\Delta(s)f$ at $s=s_0$ using Taylor expansion of $\rho^s$ as $s=s_0$, when $f\in C_c^\infty(M)$ is arbitrary, and to use that $u_s$ is even in $\rho$, and one gets \eqref{residueexp}.

We say that $\varphi$ is a \emph{generalized resonant state} for $s_0$ if 
$\varphi\in {\rm Ran}(\Pi^{\Delta}_{s_0})$, and that  it is a \emph{resonant state} if in 
addition $(\Delta_M-s_0(1-s_0))\varphi=0$. The multiplicity of a quantum resonance $s_0$ is defined to be the rank of $\Pi^{\Delta}_{s_0}$.
We will define the generalized resonant states of order $j\geq 1$ at $s_0$ by
\begin{equation}\label{Resj} 
{\rm Res}_{\Delta}^j(s_0):=\{ \varphi\in {\rm Ran}(\Pi^{\Delta}_{s_0}); (\Delta_M-s_0(1-s_0))^j\varphi=0\}
\end{equation}
with $\Pi^{\Delta}_{s_0}:={\rm Res}_{s=s_0}(R_{\Delta}(s))$.

We need a characterization of generalized resonant states of order $j$ as solutions of 
\[(\Delta_M-s_0(1-s_0))^{j}u=0\]
with very particular asymptotics for $u$ at the boundary $\pl\bbar{M}$ of the 
conformal compactification $\bbar{M}$. For this purpose, we define the Poisson 
operator $\mc{E}_M(s)$ on $M$ 
and the scattering operator $\mc{S}_M(s)$ by following the approach of Graham-Zworski 
\cite{GrZw}; we shall refer to that paper for details. 
By \cite[Proposition 3.5]{GrZw}, there is a meromorphic family of operators 
\[ \mc{E}_M(s): C^\infty(\pl\bbar{M})\to C^\infty(M)\]
in ${\rm Re}(s)\geq 1/2$, with only simple poles at $s\in \sigma(\Delta_M)$ and 
satisfying outside the poles
\[
(\Delta_M-s(1-s))\mc{E}_M(s)f=0
\]
for each $f\in C^\infty(\pl\bbar{M})$ and with the property that this is the 
only solution such that 
there is $F_s,G_s\in C_{\rm ev}^\infty(\bbar{M})$ such that $F_s|_{\pl \bbar{M}}=f$ and
\begin{equation}\label{poissonM}
\begin{gathered}
\mc{E}_M(s)f=\rho^{1-s}F_s+\rho^sG_s \textrm{ if } s\notin \demi+\nn,\\
\mc{E}_M(s)f=\rho^{1/2-k}F_{1/2+k}+\rho^{1/2+k}\log(\rho)G_{1/2+k}  \textrm{ if } s=1/2+k, \,\, k\in\nn. 
\end{gathered}  
\end{equation}
Here $F_s,G_s$ are meromorphic with simple poles at $s\in \sigma(\Delta_M)$
 and $\demi+\nn$. The functions $F_{1/2+k}$, $G_{1/2+k}$ can be expressed in 
 terms of residues of $F_s,G_s$ at $1/2+k$. Notice that in the case 
$M=\hh^2$, $\mc{E}_{\hh^2}(s)=\pi^{-\demi}2^{1-s}\frac{\Gamma(s)}{\Gamma(s-1/2)}\mc{P}_{s-1}$, 
where $\mc{P}_s$ is the Poisson-Helgason transform of Lemma \ref{poissoniso}.
By \cite[Proposition 3.9]{GrZw} the Schwartz kernel of $\mc{E}_M(s)$ is related 
to the Schwartz kernel of $R_{\Delta}(s)$ by 
\begin{equation}\label{schwartzE}
\mc{E}_M(s;x,\theta)= (2s-1)[R_{\Delta}(s;x,x')\rho(x')^{-s}]|_{x'=\theta \in \pl\bbar{M}}.
\end{equation}
This operator thus admits a meromorphic continuation to $s\in \cc$, as $R_{\Delta}(s)$ does.
The scattering operator $\mc{S}_M(s)$ is a meromorphic family of operators acting on $C^\infty(\pl\bbar{M})$ for ${\rm Re}(s)\geq 1/2$, unitary on ${\rm Re}(s)=\demi$, and defined by 
\begin{equation}\label{scatteringdef}
\mc{S}_M(s)f:= 2^{2s-1}\frac{\Gamma(s-\demi)}{\Gamma(\demi-s)}G_s|_{\pl\bbar{M}}.
\end{equation}
This operator is holomorphic outside $\sigma(\Delta_M)$ and is a family of elliptic
pseudo-differential operators of order $2s-1$, which is Fredholm of index $0$ 
as a map $H^{2s-1}(\pl\bbar{M})\to L^2(\pl\bbar{M})$, it extends meromorphically 
to $\cc$ and satisfies the functional equation 
\begin{equation}\label{fcteqS}
\mc{S}_M(s)^{-1}=\mc{S}_M(1-s).
\end{equation} 
By \cite{GrZw}, there are special points, namely $s=\demi+k$ with $k\in \nn$, where $\mc{S}_M(s)$ is a differential operator on $\pl\bbar{M}=\cup_{i=1}^{n_f}S_i$ 
which depends only on the metric $\bar{g}|_{\pl\bbar{M}}$. Moreover we have 
\begin{equation}\label{sppoints}
G_{1/2+k}|_{\pl\bbar{M}}=c_k\, \mc{S}_M(1/2+k)f, \quad c_k\not=0.
\end{equation}
 The computation of the scattering operator
is done by Guillop\'e-Zworski \cite[Appendix]{GuZw} for the hyperbolic cylinder 
$\mc{C}(\ell_i):=\rr_t\x (\rr/\ell_i\zz)_\theta$ with metric $dt^2+\cosh(t)^2d\theta^2$, 
and their computation shows that 
$\ker \mc{S}_{\mc{C}(\ell_i)}(\demi+k)=0$ for all $k\in \nn$, which implies
\begin{equation}\label{SMiso}
\mc{S}_M(\demi+k) : C^\infty(\pl\bbar{M})\to C^\infty(\pl\bbar{M}) \textrm{ is an isomorphism};
\end{equation} 
note that this fact is also proved in \cite[Lemma 8.6]{Bo}. 
Finally, one has a functional equation similar to
\eqref{fcteq},  which follows from the definition of $\mc{E}_M(s)$ and $\mc{S}_M(s)$
\begin{equation}\label{fcteq2} 
\mc{E}_M(s)=2^{1-2s}\frac{\Gamma(\demi-s)}{\Gamma(s-\demi)}\mc{E}_M(1-s)\mc{S}_M(s)
\end{equation}

We have the following result on quantum resonant states.
\begin{prop}\label{caractreson}
Let $M=\Gamma\backslash \hh^{2}$ be a convex co-compact surface. Then the following properties hold:\\
1) There is no quantum resonance at $\demi-\nn$.\\
2) If ${\rm Re}(s_0)\geq 1/2$, the poles of $R_{\Delta}(s)$ at $s_0$ are simple
and $\varphi$ is a resonant state for $s_0$ if and only if $(\Delta_M-s_0(1-s_0))\varphi=0$ with $\varphi\in \rho^{s_0}C^\infty(\bbar{M})$.\\
3) If ${\rm Re}(s_0)<1/2$ with $s_0\notin 1/2-\nn$, a solution $\varphi$ to 
$(\Delta_M-s_0(1-s_0))\varphi=0$ is a resonant state for $s_0$ if and only if
$\varphi\in\rho^{s_0}C^\infty_{\rm ev}(\bbar{M})$, and
a function $\varphi'$ is a generalized resonant state if and only if it is a 
resonant state or there is a resonant state 
$\varphi$ so that $(\Delta_M-s_0(1-s_0))^j\varphi'=\varphi$ for some $j\geq 1$, and 
$\varphi'\in\bigoplus_{k=0}^j\rho^{s_0}\log(\rho)^kC^\infty_{\rm ev}(\bbar{M})$.
\end{prop}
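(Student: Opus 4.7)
The proof combines the meromorphic structure of $R_{\Delta}(s)$, the Poisson operator $\mc{E}_M(s)$, and the scattering operator $\mc{S}_M(s)$, together with the asymptotic expansion \eqref{residueexp} derived from \eqref{schwartzker}. The guiding principle is that for generic $s_0$ the Poisson parametrization \eqref{poissonM} encodes a solution uniquely by a single piece of boundary data, because the two indicial sectors $\rho^{s_0}C^\infty_{\rm ev}$ and $\rho^{1-s_0}C^\infty_{\rm ev}$ are disjoint; the degenerate cases are precisely where these sectors merge.

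For Part 2), the region ${\rm Re}(s_0) \geq \demi$ reduces to self-adjoint spectral theory. When ${\rm Re}(s_0) > \demi$, $\rho^{s_0}C^\infty(\bbar{M})$ embeds into $L^2(M)$, so a resonant state is an $L^2$ eigenfunction of $\Delta_M$; self-adjointness makes the pole simple with residue proportional to the spectral projector, and the asymptotic $\varphi\in \rho^{s_0}C^\infty(\bbar{M})$ follows from \eqref{residueexp} with $j=1$. At $s_0 = \demi$ the order bound already sits in Theorem \ref{mame}, and \eqref{residueexp} still supplies the asymptotic. For Part 3), the only if direction is again \eqref{residueexp}. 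For the converse, I would exploit the fact that when ${\rm Re}(s_0) < \demi$ and $s_0 \notin \demi - \nn$, the arithmetic progressions $\{s_0+2j : j\geq 0\}$ and $\{1-s_0+2j : j\geq 0\}$ are disjoint, so a solution $\varphi\in \rho^{s_0}C^\infty_{\rm ev}(\bbar{M})$ has identically vanishing incoming $\rho^{1-s_0}$ component at every order. Reading off the leading $\rho^{s_0}$ coefficient $g$ of $\varphi$ on $\pl\bbar{M}$, I would identify $\varphi$ with $c\cdot {\rm Res}_{s_0}\mc{E}_M(s)f$ for boundary data $f$ related to $g$ via $\mc{S}_M(1-s_0)^{-1}$ using \eqref{poissonM} and \eqref{scatteringdef}; via \eqref{schwartzE} this residue arises from a residue of $R_{\Delta}(s)$, placing $\varphi\in {\rm Ran}(\Pi^{\Delta}_{s_0})$. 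Higher-order Jordan blocks are constructed by Laurent-expanding $R_{\Delta}(s)f$ to order $j$ about $s_0$; the $\log(\rho)^k$ factors in \eqref{residueexp} then arise by differentiating $\rho^s$ with respect to $s$.

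For Part 1), fix $s_0 = \demi - k \in \demi - \nn$. Via \eqref{schwartzE} a pole of $R_{\Delta}(s)$ at $s_0$ would force a pole of $\mc{E}_M(s)$ there. The functional equation \eqref{fcteq2} reads $\mc{E}_M(s) = c(s)\mc{E}_M(1-s)\mc{S}_M(s)$ with $c(s) = 2^{1-2s}\Gamma(\demi - s)/\Gamma(s-\demi)$, and $c$ has a simple zero at $s_0$ since $\Gamma(s-\demi)$ has a pole there. Meanwhile $\mc{E}_M(1-s_0) = \mc{E}_M(\demi + k)$ is regular (only $\log(\rho)$ asymptotics by the second case of \eqref{poissonM}), and the functional equation \eqref{fcteqS} combined with \eqref{SMiso} identifies $\mc{S}_M(s_0) = \mc{S}_M(\demi + k)^{-1}$ as a regular operator on $C^\infty(\pl\bbar{M})$. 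Thus the right-hand side of \eqref{fcteq2} vanishes to first order at $s_0$, making $\mc{E}_M(s)$ regular there and contradicting the putative pole. The main obstacle lies exactly here: the two indicial exponents $s_0$ and $1-s_0$ differ by the even integer $2k$, so the sectors $\rho^{s_0}C^\infty_{\rm ev}$ and $\rho^{1-s_0}C^\infty_{\rm ev}$ collapse into each other and the clean separation of boundary data underpinning Part 3) breaks down. The argument must descend to the finer fact that $\mc{S}_M(\demi + k)$ is not merely injective but a genuine isomorphism, a result whose proof in \cite[Appendix]{GuZw} is non-trivial.
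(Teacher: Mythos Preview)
Your outline for Parts 1 and 2 is broadly on target, but Part 3 contains a genuine error that undermines the converse direction, and the Jordan block case is not really addressed.

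In Part 3 you propose to recover $\varphi$ as $c\cdot{\rm Res}_{s_0}\mc{E}_M(s)f$ with $f=\mc{S}_M(1-s_0)^{-1}g$, where $g=[\rho^{-s_0}\varphi]|_{\pl\bbar{M}}$. But $\mc{S}_M(1-s_0)$ cannot be inverted on $g$: the very hypothesis $\varphi\in\rho^{s_0}C^\infty_{\rm ev}(\bbar{M})$ (no $\rho^{1-s_0}$ sector) says, via the uniqueness in \eqref{poissonM} applied at $1-s_0$, that $\varphi=\mc{E}_M(1-s_0)g$ with vanishing $\rho^{1-s_0}$ leading coefficient, i.e.\ $\mc{S}_M(1-s_0)g=0$. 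So $g$ sits in the kernel, not the image, of $\mc{S}_M(1-s_0)$. What actually links $\varphi$ to ${\rm Ran}(\Pi^\Delta_{s_0})$ is the opposite mechanism: the vanishing of $\mc{S}_M(1-s)f(s)$ to order $j+1$ at $s_0$ combined with \eqref{fcteq2} gives $\mc{E}_M(1-s)f(s)=(s-s_0)^{j+1}\mc{E}_M(s)r(s)$, and the Laurent poles of $\mc{E}_M(s)$ at $s_0$ (tied to those of $R_\Delta$ through \eqref{schwartzE}) then absorb the prefactor. The paper implements this by first deforming the whole Jordan chain $\varphi_0,\dots,\varphi_j$ into a holomorphic family $\phi(s)=\rho^sF(s)$ with $F(s)\in C^\infty_{\rm ev}(\bbar{M})$ and $(\Delta_M-s(1-s))\phi(s)=\mc{O}(|s-s_0|^{j+1}\rho^{{\rm Re}(s)+2})$, then applying Green's formula against $R_\Delta(1-s)$ to identify $\phi(s)$ with $\mc{E}_M(1-s)f(s)$ modulo $\mc{O}((s-s_0)^{j+1})$. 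The verification that the coefficients $F_k$ of $F(s)$ land in $C^\infty_{\rm ev}(\bbar{M})$ (not merely $\oplus\log(\rho)^\ell C^\infty_{\rm ev}$) is the technical core of the argument and requires a nontrivial inductive computation using \eqref{DeltaM}; your proposal does not touch this. Your one-line treatment of Jordan blocks (``Laurent-expanding $R_\Delta(s)f$'') addresses the forward direction already handled by \eqref{residueexp}, not the converse you need.

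For Part 1, your chain of implications is morally correct, but the first step ``a pole of $R_\Delta(s)$ forces a pole of $\mc{E}_M(s)$ via \eqref{schwartzE}'' is not automatic: \eqref{schwartzE} restricts the Schwartz kernel to the boundary, and you must rule out that this restriction kills the residue. The paper sidesteps this by invoking \cite[Lemma~3.4]{Gu2}, which directly transfers poles of $R_\Delta$ to poles of the normalized scattering operator $S_M(s)=2^{1-2s}\frac{\Gamma(1/2-s)}{\Gamma(s-1/2)}\mc{S}_M(s)$, and then Laurent-expands the functional equation $S_M(1-s)S_M(s)={\rm Id}$ at $s_0=\tfrac12-k$ (using that $S_M$ has a simple pole at $\tfrac12+k$ with invertible residue by \eqref{SMiso}) to force $S_M(s)=(s-s_0)L(s)$ with $L(s_0)$ invertible. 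This is close in spirit to your argument, but it works at the level of the scattering matrix rather than the Poisson operator, which avoids the boundary-restriction issue.
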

\begin{proof} To prove 1), we use  \cite[Lemma 3.4]{Gu2} which says that if $s_0$ 
is a pole of $R_{\Delta}(s)$ then it is a pole of 
$S_M(s):=2^{1-2s}\frac{\Gamma(1/2-s)}{\Gamma(s-1/2)}\mc{S}_M(s)$. But $S_M(s)$ 
has a pole of order $1$ at $s_0=1/2+k$ with residue $c_k\,\mc{S}_M(1/2+k)$ for 
some $c_k\not=0$, and this operator has no kernel. Therefore, by expanding in 
Laurent series the functional equation $S_M(1-s)S_M(s)={\rm Id}$ at $s_0=1/2-k$, 
we directly see that $S_M(s)$ must be of the form
$S_M(s)=(s-s_0)L(s)$
for some holomorphic family of operator $L(s)$ near $s=s_0$, with $L(s_0)$ 
invertible on $C^\infty(\pl\bbar{M})$. This shows 1). 

Statement 2) is direct to see for ${\rm Re}(s_0)>1/2$: the resonant states are of 
the desired form by \cite[Lemma 4.8]{PaPe} and conversely if
$\varphi\in \rho^{s_0}C^\infty(\bbar{M})\cap \ker(\Delta_M-s_0(1-s_0))$, 
then $\varphi\in \ker_{L^2}(\Delta_M-s_0(1-s_0))={\rm Ran}(\Pi^{\Delta}_{s_0})$. 
For 
$s_0=1/2$, the pole of the resolvent is simple and the resonant states are in 
$\rho^{1/2}C^\infty(\bbar{M})$ by 
 \cite[Lemma 4.9]{PaPe}. The converse part will follow from the proof of 3).
 
Now we prove 3). By \eqref{residueexp}, if $\varphi'$ is a generalized resonant 
state satisfying the equation $(\Delta_M-s_0(1-s_0))^{j+1}
\varphi'=0$, then $\varphi'\in \bigoplus_{k=0}^J \rho^{s_0}\log(\rho)^kC_{\rm ev}^\infty(\bbar{M})$ 
for some $J$. Using the form of $\Delta_M$ in \eqref{DeltaM} and writing the 
formal expansions at $\rho=0$ of that equation, it is direct to see that $J\leq j$.

We next prove the converse.  Let $A_{s_0}:=(\Delta_M-s_0(1-s_0))$ and let 
$\varphi$ be a solution of $A_{s_0}^{j+1}\varphi=0$ with 
$\varphi\in \bigoplus_{\ell=0}^{j}\rho^{s_0}\log(\rho)^{\ell}C_{\rm ev}^\infty(\bbar{M})$ 
with $j\geq 0$. Define $\varphi_\ell:=A_{s_0}^{j-\ell}\varphi$ for all $\ell\in 0,\ldots,j$.
It is easy to check that 
$\varphi_\ell\in \bigoplus_{k=0}^\ell\rho^{s_0}\log(\rho)^{k}C_{\rm ev}^\infty(\bbar{M})$. We first 
construct a holomorphic family $\phi(s)=\rho^s F(s)$ with  $F(s)\in C_{\rm ev}^\infty(\bbar{M})$ 
such that 
\begin{equation}\label{defqs} 
q(s):=(\Delta_M-s(1-s))\phi(s)\in\rho^{s+2}C_{\rm ev}^\infty(\bbar{M}), \quad |q(s)|\leq C|s-s_0|^{j+1}\rho^{{\rm Re}(s)+2}.
\end{equation}
To construct $F(s)$, we will set $F(s):=\sum_{k=0}^j F_k(s-s_0)^k$ for some 
$F_k\in C_{\rm ev}^\infty(\bbar{M})$ well chosen. Taylor expanding at $s=s_0$
\begin{equation}\label{phi(s)}
\phi(s)=\rho^s F(s)=\sum_{k=0}^j(s-s_0)^k\phi_k+\mc{O}((s-s_0)^{j+1})
\end{equation}
with $\phi_k:=\rho^{s_0}\sum_{\ell=0}^k \frac{1}{\ell!}\log(\rho)^\ell F_{k-\ell}$, the equation 
\[ (\Delta_M-s(1-s))\phi(s)=\mc{O}((s-s_0)^{j+1})\]
reduces to 
\begin{equation}\label{inductionA}
 A_{s_0}\phi_k+(2s_0-1)\phi_{k-1}+\phi_{k-2}=0
 \end{equation} 
for all $k\leq j$, with the convention $\phi_{-1}=\phi_{-2}=0$.
The equation \eqref{inductionA} can be solved by 
choosing $\phi_k$  to be a linear combination of the form 
\[ \phi_0=\varphi_0, \textrm{ and } \phi_k= (1-2s_0)^k\varphi_k +\sum_{\ell=0}^{k-1}c_\ell(s_0)\varphi_{\ell} \,\, \textrm{ for }k\in[1,j]\]
for some polynomials $c_\ell(s_0)$ in $s_0$,
and we also note that $\phi_k\in \bigoplus_{\ell=0}^k\rho^{s_0}\log(\rho)^{\ell}C_{\rm ev}^\infty(\bbar{M})$.
We can then define for $k\leq j$
\[  F_k:= \rho^{-s_0}\sum_{\ell=0}^k \frac{1}{\ell!}(-\log \rho)^\ell \phi_{k-\ell}.
\]
We need to check that $F_k\in C_{\rm ev}^\infty(\bbar{M})$. To do this, we will show that 
\begin{equation}\label{stufftoshow}
\rho^{-s_0}A_{s_0}(\rho^{s_0}F_k)\in \rho^2C_{\rm ev}^\infty(\bbar{M}).
\end{equation}
Indeed, since we know that $F_k\in \bigoplus_{\ell=0}^{2k} \log(\rho)^\ell C_{\rm ev}^\infty(\bbar{M})$, it is an easy exercise to check that \eqref{stufftoshow} implies that $F_k\in C_{\rm ev}^\infty(\bbar{M})$ by using the expression \eqref{DeltaM} of $\Delta_M$ near $\pl\bbar{M}$. 
We already know that $F_0\in C_{\rm ev}^\infty(\bbar{M})$ and that \eqref{DeltaM} is true for $k=0$.
Now to prove \eqref{stufftoshow}, we write 
\[ A_{s_0}(\rho^{s_0}F_k)=\sum_{\ell=0}^k \frac{(-1)^\ell}{\ell!}\Big(A_{s_0}(\phi_{k-\ell})(\log \rho)^\ell+
\Delta_M((\log \rho)^\ell)\phi_{k-\ell}-2\ell (\log\rho)^{\ell-1} N\phi_{k-\ell}\Big)\]
where $N:=\nabla(\log \rho)$ denotes the gradient of $\log\rho$ with respect to $g$.
By \eqref{inductionA}, we get 
\[ \sum_{\ell=0}^k \frac{(-1)^\ell}{\ell!}A_{s_0}(\phi_{k-\ell})(\log \rho)^\ell=\rho^{s_0}(-F_{k-2}+(1-2s_0)F_{k-1}).
\]
Next we have
\[\begin{gathered} 
\sum_{\ell=1}^k \frac{(-1)^\ell}{\ell!}\ell (\log\rho)^{\ell-1} N\phi_{k-\ell}=\\
\sum_{\ell=1}^k
\frac{(-1)^\ell}{(\ell-1)!}N((\log\rho)^{\ell-1}\phi_{k-\ell})-
|N|^2_g\, \sum_{\ell=2}^k
\frac{(-1)^\ell}{(\ell-2)!} (\log\rho)^{\ell-2}\phi_{k-\ell}=\\
-\rho^{s_0}((N+s_0|N|_g^2)F_{k-1}+ |N|_g^2F_{k-2}).
\end{gathered}\]
and
\[\begin{gathered}
\sum_{\ell=0}^k \frac{(-1)^\ell}{\ell!}\Delta_M((\log \rho)^\ell)\phi_{k-\ell}=\\
-|N|^2_g\sum_{\ell=2}^k \frac{(-1)^\ell}{(\ell-2)!}
(\log \rho)^{\ell-2}\phi_{k-\ell}+\sum_{\ell=1}^k \frac{(-1)^\ell}{(\ell-1)!}
(\log \rho)^{\ell-1}\phi_{k-\ell}\Delta_M(\log \rho)=\\
-|N|^2_g\rho^{s_0}F_{k-2}-\Delta_M(\log \rho)\rho^{s_0}F_{k-1}.
\end{gathered}
\]
Consequently we get 
\begin{equation}\label{Acommut} 
\rho^{-s_0}A_{s_0}(\rho^{s_0}F_k)=(|N|^2_g-1)F_{k-2}+(2N+1-\Delta_M(\log \rho)+2s_0(|N|^2_g-1))F_{k-1}.
\end{equation}
By using \eqref{DeltaM}, a direct computation gives that $\Delta_M(\log \rho)-1\in \rho^2C_{\rm ev}^\infty(\bbar{M})$ and we also 
have $|N|^2_g=1$ near $\pl \bbar{M}$, so $|N|^2_g\in C_{\rm ev}^\infty(\bbar{M})$. Moreover $N$  maps $C_{\rm ev}^\infty(\bbar{M})$ to $\rho^2C_{\rm ev}^\infty(\bbar{M})$.
An induction in $k$ with \eqref{Acommut}  then shows \eqref{stufftoshow}. We directly get that the function $q(s)$ defined by \eqref{defqs} is a holomorphic family in $\rho^{s+2} C_{\rm ev}^\infty(\bbar{M})$, and by Taylor expanding the term $(\Delta_M-s(1-s))\phi(s)$ at $s=s_0$,
we also have (by construction)
\begin{equation}\label{remainderq}
|\rho^{-s}q(s)|=\mc{O}(|s-s_0|^{j+1}\rho^2).
\end{equation}
Using Green's formula in the region $\rho\geq \eps$ for some small $\eps>0$, 
 we see that for $z\in M$ fixed and $s$ near $s_0$
\[ \begin{split}
(R_\Delta (1-s)q(s))(z)=& \phi(s;z) -\int_{\rho=\eps}\rho\pl_{\rho}R_{\Delta}(1-s;z, \rho,\theta)\phi(s;\rho,\theta) 
\frac{d\theta}{\rho}\\
 &+ \int_{\rho=\eps}R_{\Delta}(1-s;z, \rho,\theta)\rho\pl_{\rho}\phi(s;\rho,\theta) 
\frac{d\theta}{\rho}.
\end{split}\]
Now $\rho\pl_\rho \phi(s;\rho,\theta)=s\rho^{s}F(s;0,\theta)+\mc{O}(\rho^{{\rm Re}(s)+1})$ and, 
 using \eqref{schwartzE}, 
 \[\rho\pl_\rho R_{\Delta}(1-s;z, \rho,\theta)=\frac{1-s}{1-2s}\mc{E}_M(1-s;z,\theta)+\mc{O}(\rho^{2-{\rm Re}(s)}).\]
Thus we obtain
\[R_\Delta (1-s)q(s)=\phi(s)-\mc{E}_M(1-s)F(s)|_{\pl \bbar{M}}.\]

By \eqref{remainderq}, we also have $|R_\Delta (1-s)q(s)|=\mc{O}(|s-s_0|^{j+1})$ uniformly on compact sets of $M$, and therefore 
\[ \phi(s)=\mc{E}_M(1-s)F(s)|_{\pl \bbar{M}}+\mc{O}(|s-s_0|^{j+1})\]
uniformly on compact sets.
We define $f(s):=F(s)|_{\pl \bbar{M}}$ and differentiate for $\ell\leq j$
\[
\pl_s^\ell(\phi(s))|_{s=s_0}=\pl_{s}^{\ell}(\mc{E}_M(1-s)f(s))|_{s=s_0}=\rho^{s_0}\sum_{i=0}^\ell \log(\rho)^i H^{\ell}_i+\rho^{1-s_0}
\sum_{i=0}^\ell \log(\rho)^i G_i^\ell \]
where $H_i^\ell,G_i^\ell\in C^\infty_{\rm ev}(\bbar{M})$ and 
$G_{0}^\ell|_{\pl\bbar{M}}=\pl_s^{\ell}(\mc{S}_M(1-s)f(s))|_{s=s_0}$. But from \eqref{phi(s)} and the fact that 
$\phi_k\in \bigoplus_{\ell=0}^k\rho^{s_0}\log(\rho)^{\ell}C_{\rm ev}^\infty(\bbar{M})$, we see that 
$\pl_s^{\ell}(\mc{S}_M(1-s)f(s))|_{s=s_0}=0$ for all $\ell\leq j$ and thus 
 $\mc{S}_M(1-s)f(s)=\mc{O}(|s-s_0|^{j+1})$.  Therefore $\mc{S}_M(1-s)f(s)=(s-s_0)^{j+1}r(s)$ for some holomorphic family $r(s)\in C^\infty(\pl\bbar{M})$. 
We write $\mc{E}_M(s)=\sum_{\ell=1}^N(s-s_0)^{-\ell}Q_\ell +H(s)$ for some holomorphic operator family 
 $H(s)$ near $s_0$ and some operators $Q_\ell:C^\infty(\pl\bbar{M})\to C^\infty(M)$.
By using \eqref{fcteq2}, we get
\[\mc{E}_M(1-s)f(s)=(s-s_0)^{j+1} \mc{E}_M(s)r(s)=\sum_{\ell=1}^N  (s-s_0)^{-\ell+j+1}Q_{\ell}\,r(s)+
(s-s_0)^{j+1}H(s)r(s)\]
which implies that $j+1\leq N$ and $\pl_s^{k}(\phi(s))|_{s=s_0}\in \bigoplus_{\ell=1}^N{\rm Ran}(Q_\ell)$ for each $k\leq j$.  By \eqref{schwartzE}, 
we have $\bigoplus_{\ell=1}^N{\rm Ran}(Q_\ell)\subset {\rm Ran}(\Pi^{\Delta}_{s_0})$ since 
the singular part of the Laurent expansion of $R_{\Delta}(s)$ is a finite rank operator with range 
${\rm Ran}(\Pi^{\Delta}_{s_0})$. Using again \eqref{phi(s)}, this shows that $\phi_k$, and therefore $\varphi_k$, are generalized resonant states for $k\leq j$. This completes the proof.
\end{proof}

\begin{rem}
The proof of 2) and 3) in Proposition \ref{caractreson} also applies in the more general setting of even asymptotically hyperbolic manifolds in the sense of \cite{Gu1}, in any dimension $n+1$, 
by replacing $(\Delta_M-s_0(1-s_0))$ by $(\Delta_M-s_0(n-s_0))$ and $s_0\notin 1/2-\nn$ by $s_0\notin n/2-\nn$. 
\end{rem}

\begin{lemm}\label{technical}
For any $\varphi\in \rho^{s_0}C^\infty(\bbar{M})$ satisfying $(\Delta_M-s_0(1-s_0))\varphi=0$ 
with $s_0\notin -\nn_0$, there exists a distribution $\omega\in \mc{D}'(\mathbb{S}^1)$ supported in $\Lambda_\Gamma$ such that $\pi_\Gamma^*\varphi=\mc{P}_{s_0-1}(\omega)$ and for each $\gamma\in \Gamma$,
$\gamma^*\omega=N_\gamma^{-s_0+1}\omega$. 
\end{lemm}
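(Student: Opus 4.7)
The plan is to lift $\varphi$ to the universal cover, use Lemma~\ref{poissoniso} to produce $\omega$, and then extract its equivariance and support from the $\rho^{s_0}$ boundary asymptotic of $\varphi$. Set $\til\varphi:=\pi_\Gamma^*\varphi$; it is $\Gamma$-invariant and satisfies $(\Delta_{\hh^2}-s_0(1-s_0))\til\varphi=0$ on $\hh^2$. A fundamental domain argument based on the pointwise bound $|\varphi|\leq C\rho^{\Re(s_0)}$ on $\bbar M$, together with $\rho\circ\pi_\Gamma\asymp 1-|x|$ near lifts of $\Omega_\Gamma$ and $\rho\circ\pi_\Gamma$ bounded below on any compact subset of $\hh^2$ projecting into the convex core, shows that $\til\varphi$ is tempered on $\hh^2$. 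Since the hypothesis $s_0\notin-\nn_0$ translates precisely to $\la:=s_0-1\notin-\nn$, Lemma~\ref{poissoniso} yields a unique $\omega\in\mc D'(\mathbb S^1)$ with $\mc P_{s_0-1}(\omega)=\til\varphi$; combining $\gamma^*\til\varphi=\til\varphi$ with the covariance identity of Lemma~\ref{poissoniso} and the injectivity of $\mc P_\la$ gives $\gamma^*\omega=N_\gamma^{1-s_0}\omega$ for every $\gamma\in\Gamma$.

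For the support claim I would fix $\nu_0\in\Omega_\Gamma$, a small neighborhood $V\subset\Omega_\Gamma$ of $\nu_0$, and $\chi\in C^\infty(\mathbb S^1)$ with $\supp\chi\subset V$. The radial geodesics toward $V$ project through $\pi_\Gamma$ into a collar of $\pl\bbar M$ in $\bbar M$ on which $\rho$ is a smooth non-vanishing multiple of $t=2(1-r)/(1+r)$, and the indicial analysis of $\Delta_M$ at $\pl\bbar M$ (cf.~\eqref{DeltaM}) combined with $s_0\notin-\nn_0$ forces $\rho^{-s_0}\varphi$ to have an even Taylor expansion in $\rho$. It follows that
\[
\int_{\mathbb S^1}\til\varphi\bigl(\tfrac{2-t}{2+t}\nu\bigr)\chi(\nu)\,dS(\nu)=t^{s_0}H(t),\qquad H\in C_{\rm ev}^\infty([0,\epsilon)).
\]
Comparison with the weak expansion \eqref{weakasymptotic0} of $\mc P_{s_0-1}(\omega)$ and matching powers of $t$ and $\log t$ then force the branch of \eqref{weakasymptotic0} that does not carry the $t^{s_0}$-series to vanish as a germ at $t=0$; in particular its leading coefficient at $t=0$ is zero. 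Reading off this coefficient from \eqref{Flapm} when $s_0\notin\demi+\zz$, from the special formula after \eqref{Flapm} when $s_0\in\demi+\nn$, from \eqref{-1/2-kbis} when $s_0\in\demi-\nn$, and from \eqref{Sen1/2} (where the vanishing log coefficient is $-\sqrt 2\cjg\omega,\chi\cjd_{\mathbb S^1}$) when $s_0=\demi$ shows that in every case it is a nonzero scalar multiple of $\cjg\omega,\chi\cjd_{\mathbb S^1}$. Hence $\cjg\omega,\chi\cjd_{\mathbb S^1}=0$ for every $\chi\in C_c^\infty(V)$, so $\omega|_V=0$; since $\nu_0\in\Omega_\Gamma$ was arbitrary, $\supp\omega\subset\Lambda_\Gamma$.

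The main obstacle is the matching step at the half-integer values $s_0\in\demi+\zz$, where the two formal exponents $s_0$ and $1-s_0$ of \eqref{weakasymptotic0} differ by an integer and the log-multiplied branches appear. One must identify, at $s_0=\demi$, that it is the log-multiplied coefficient rather than its non-log partner that carries $\cjg\omega,\chi\cjd_{\mathbb S^1}$ via \eqref{Sen1/2}; check at $s_0=\demi+k$ with $k\in\nn$ that the required vanishing of $F^-$ to order $2k$ in $t$ still delivers the leading condition $F^-(0)=0$; and verify at $s_0\in\demi-\nn$ that the other Frobenius branch $\rho^{1-s_0}$, which is automatically contained in $\rho^{s_0}C^\infty(\bbar M)$, does not interfere with the vanishing of the log-carrying coefficient. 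These checks require a careful control of the smooth $t$-versus-$\rho$ relation near $\Omega_\Gamma$ and of the evenness of the resulting expansions.
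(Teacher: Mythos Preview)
Your proposal is correct and follows essentially the same route as the paper: lift, prove temperedness, invoke the bijectivity of $\mc P_{s_0-1}$ from Lemma~\ref{poissoniso} to produce $\omega$ and its equivariance, and then kill $\omega$ on $\Omega_\Gamma$ by comparing the weak radial expansion \eqref{weakasymptotic0} against the pure $t^{s_0}$-series coming from the hypothesis $\varphi\in\rho^{s_0}C^\infty(\bbar M)$.

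Two minor remarks. First, your temperedness sketch is a little loose: boundedness of $\rho\circ\pi_\Gamma$ from below on a fixed compact set is automatic and does not by itself yield the exponential control in $d_{\hh^2}(x,0)$ that you need. The paper makes this quantitative by observing that the sublevel sets $M_\eps=\{\rho\geq\eps\}$ are geodesically convex for small $\eps$, so that every $x$ with $d_{\hh^2}(x,0)\leq T$ projects into $M_{Ce^{-T}}$, whence $|\til\varphi(x)|\leq C\eps^{\min(\Re s_0,0)}\leq C' e^{\max(-\Re s_0,0)T}$. Your fundamental-domain idea can be completed along the same lines, but the convexity formulation is cleaner. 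Second, the evenness of $H$ that you invoke via the indicial equation is harmless but unnecessary for the support argument: all you use is that the integral against $\chi\in C_c^\infty(\Omega_\Gamma)$ is $t^{s_0}$ times a smooth function, so that in each of the four cases of \eqref{weakasymptotic0} the branch (or its leading coefficient) carrying $\cjg\omega,\chi\cjd_{\mathbb S^1}$ is forced to vanish; the paper records this uniformly as $F_-(0)=C(s_0)\cjg\omega,\chi\cjd$ with $C(s_0)\neq 0$ for $s_0\notin -\nn_0$, which is exactly the case-by-case check you spell out.
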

\begin{proof} Let $\til{\varphi}=\pi_\Gamma^*\varphi$ be the lift of $\varphi$ to $\hh^{2}$. Then we have 
$(\Delta_{\hh^2}-s_0(1-s_0))\til{\varphi}=0$ on $\hh^{2}$ and we claim that 
$\til{\varphi}$ is tempered on $\hh^{2}$. Indeed, if $T>0$ and $0$ denotes the 
center the unit ball in $\rr^{2}$ (representing $\hh^{2}$), 
consider $m(T):=\sup_{d_{\hh^2}(x,0)\leq T}|\til{\varphi}(x)|$ where 
$d_{\hh^2}(\cdot,\cdot)$ denotes the hyperbolic distance. For 
$\eps>0$ small enough $M_\eps:= \{x\in M; \rho(x)\geq \eps\}$ is a geodesically convex set 
and it is easy to see that there exists $C>0$ so that for all $T>0$ 
each point $x\in \hh^{2}$ with $d_{\hh^2}(x,0)\leq T$ projects by the covering map $\pi_\Gamma$ 
to the region $M_\eps$ for $\eps=Ce^{-T}$. Then 
$m(T)\leq \sup_{x\in M_\epsilon}(|\varphi(x)|)\leq C_{s_0}e^{\max(-{\rm Re}(s_0),0)T}$ for some constant 
$C_{s_0}$ depending on ${\rm Re}(s_0)$. Here the last inequality follows from 
$\varphi \in \rho^{s_0}C^\infty(\bbar M)$
and this estimate shows that $\til{\varphi}$ is tempered on $\hh^{2}$. By the surjectivity of the 
Poisson-Helgason transform, there exists a distribution $\omega\in \mc{D}'(\mathbb{S}^1)$ so that 
$\til{\varphi}=\mc{P}_{s_0-1}(\omega)$. By Lemma \ref{poissoniso} and the discussion that follows, 
for any $\chi\in C^\infty(\mathbb{S}^1)$
one has for $t\in(0,\eps)$ with $\eps>0$ small
\begin{equation}\label{weakasymptotic} 
\int_{\mathbb{S}^1} \mc{P}_{s_0-1}(\omega)(\tfrac{2-t}{2+t}\nu)\chi(\nu)d\nu=
\left \{\begin{array}{ll}
t^{1-s_0}F_-(t)+t^{s_0}F_+(t) & \textrm{if } s_0\notin 1/2+\zz\\ 
t^{1-s_0}F_-(t)+t^{s_0}\log(t)F_+(t) & \textrm{if } s_0\in 1/2+\nn\\
t^{1-s_0}\log(t)F_-(t)+t^{s_0}F_+(t) & \textrm{if } s_0\in 1/2-\nn_0
\end{array}\right.
\end{equation}
for some $F_\pm\in C^\infty([0,\eps))$ and $F_-(0)=C(s_0)\cjg \omega,\chi\cjd$ where $C(s_0)\not=0$
because $s_0\notin -\nn_0$.  On the other hand, since $\pi_\Gamma^*\rho$ is a 
boundary defining function of $\Omega_\Gamma$ in $\hh^{2}\cup \Omega_{\Gamma}$, 
in a small neighborhood $V_p$ of any point $p\in \Omega_\Gamma$ in $\hh^{2}\cup \Omega_\Gamma$, 
the function $\til{\varphi}$ has an asymptotic expansion as $t\to 0$
\[
\til{\varphi}(\tfrac{2-t}{2+t}\nu)\sim \sum_{k=0}^\infty t^{s_0+k}\alpha_k(\nu)
\]
for some $\alpha_k\in C^\infty(V_p)$, therefore if $\chi\in C_c^\infty(\Omega_\Gamma)$ 
is supported in $V_p$, we have 
\[
\int_{\mathbb{S}^1} \til{\varphi}(\tfrac{2-t}{2+t}\nu)\chi(\nu)d\nu\sim \sum_{k=0}^\infty t^{s_0+k}\cjg \alpha_k,\chi\cjd
\]
and thus from \eqref{weakasymptotic} we deduce that $\cjg \omega,\chi\cjd=0$. 
This shows that $\omega$ is supported in $\Lambda_\Gamma$. Let $\gamma\in \Gamma$, we have 
\[
\mc{P}_{s_0-1}(N_\gamma^{s_0-1}\gamma^*\omega) = \gamma^* \mc{P}_{s_0-1}(\omega)
\]
which is also equal to $\mc{P}_{s_0-1}(\omega)$ since $\til{\varphi}$ is $\Gamma$-automorphic. 
By the injectivity of the Poisson-Helgason transform \cite[Corollary 6.9]{DFG}, we thus 
deduce that $\gamma^*\omega=N_\gamma^{-s_0+1}\omega$. 
\end{proof}
 
We show the following 
\begin{theo}\label{classicquantic}
Let $M=\Gamma\backslash \hh^{2}$ be a smooth oriented convex co-compact hyperbolic 
surface and let $SM$ be its unit tangent bundle.\\ 
1) For each $\la_0\in \cc\setminus (-\demi-\demi \nn_0)$  the pushforward map ${\pi_0}_*: \mc{D}'(SM)\to \mc{D}'(M)$ restricts to a linear isomorphism of complex vector spaces for each $j\geq 1$
\begin{equation}\label{pi*iso}
{\pi_0}_* : V_0^j(\la_0) \to {\rm Res}_{\Delta}^j(\la_0+1)
\end{equation}
where $\Delta_M$ is the Laplacian on $M$ acting on functions.\\
2) For each  $\la_0=-\demi-k$ with $k\in\nn$, $V_0^j(\la_0)=0$ and ${\rm Res}^j_{\Delta_M}(\la_0+1)=0$ 
for all $j\in\nn$.\\
3) For $\la_0=-\demi$, there are no Jordan blocks, i.e. $V_0^j(-\demi)=0$ for $j>1$, 
and the map 
\begin{equation}\label{pi*iso2} 
{\pi_0}_* : V_0^1(-\demi)\to {\rm Res}_{\Delta}^1(1/2)
\end{equation}
is a linear isomorphism of complex vector spaces.\\
4) For $\la_0=-n\in -\nn$, if $\Gamma$ is non-elementary, there are no Jordan blocks, 
i.e. $V_0^j(-n)=0$ if $j>1$, and the following map is an isomorphism of real vector spaces
\begin{equation}\label{pi*iso3}
 i^{n+1}{\pi_n}_* : V_0^1(-n)\to H_{n}(M)
 \end{equation}
where $H_n(M)$ is defined by \eqref{HnH-n}. 
\end{theo}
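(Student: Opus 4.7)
The plan is to adapt the compact-case strategy of Theorem \ref{noninteger} to the convex co-compact setting, using the two new characterizations developed earlier in this section: Proposition \ref{caractreson} expresses quantum generalized resonant states through their even polyhomogeneous expansions $\bigoplus_k \rho^{s_0}(\log\rho)^k C_{\rm ev}^\infty(\bbar{M})$ at $\pl\bbar{M}$, and Lemma \ref{technical} matches such resonant states with $\Gamma$-equivariant distributions on $\mathbb{S}^1$ supported in $\Lambda_\Gamma$. The extra support condition $\supp(u)\subset\Lambda_+$ from \eqref{Resk2} is transported through the correspondence via the identity $\til{\Lambda}_+=B_-^{-1}(\Lambda_\Gamma)$ in \eqref{Lambada}: a distribution $\omega\in\mc{D}'(\mathbb{S}^1)$ is supported in $\Lambda_\Gamma$ precisely when $\mc{Q}_-(\omega)$ is supported in $\til{\Lambda}_+$.

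For part 1) (the generic case), given a Jordan chain $u^{(0)},\ldots,u^{(j-1)}\in V_0^j(\la_0)$, I would lift to $\til{u}^{(\ell)}$ on $S\hh^{2}$ and form $v^{(\ell)}:=\Phi_-^{-\la_0}\sum_{i=0}^{\ell}\tfrac{(\log\Phi_-)^i}{i!}\til{u}^{(\ell-i)}$ as in the compact proof. These lie in $\ker X\cap\ker U_-$, so $v^{(\ell)}=\mc{Q}_-(\omega^{(\ell)})$ for distributions $\omega^{(\ell)}$ with the same $\Gamma$-equivariance relations as in Theorem \ref{noninteger}; the support remark above yields $\supp(\omega^{(\ell)})\subset\Lambda_\Gamma$. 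Using \eqref{deriveepoisson}, the pushforwards $\til{\varphi}_\ell:={\pi_0}_*\til{u}^{(\ell)}$ become linear combinations of $\pl_\la^k\mc{P}_{\la_0}(\omega^{(m)})$; the weak asymptotic \eqref{weakasymptotic0} combined with the vanishing of $\omega^{(m)}$ on $\Omega_\Gamma$ forces $\varphi_\ell:={\pi_0}_*u^{(\ell)}$ to lie in $\bigoplus_k \rho^{\la_0+1}(\log\rho)^k C_{\rm ev}^\infty(\bbar{M})$, placing it in ${\rm Res}^j_{\Delta}(\la_0+1)$ by Proposition \ref{caractreson} with matching Jordan structure. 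Surjectivity runs the construction in reverse: given $\varphi\in{\rm Res}^j_{\Delta}(\la_0+1)$, Lemma \ref{technical}, applied step-by-step along the Jordan chain together with the $\pl_\la\mc{P}$-corrections from the compact proof, produces the $\omega^{(\ell)}$, and the corresponding $\til{u}^{(\ell)}:=\Phi_-^{\la_0}\mc{Q}_-(\omega^{(\ell)})$ (with appropriate $\log\Phi_-$ terms) descends to an element of $V_0^j(\la_0)$; the support condition on $\omega^{(\ell)}$ automatically places the Ruelle state in $\Lambda_+$. Part 3) ($\la_0=-\demi$) is the same argument specialized using \eqref{Sen1/2}, with absence of Jordan blocks automatic since Theorem \ref{mame} already rules them out on the quantum side.

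Part 2) ($\la_0=-\demi-k$, $k\in\nn$) is then a corollary: Proposition \ref{caractreson}(1) gives ${\rm Res}^j_{\Delta}(\la_0+1)=0$, and running the injective half of part 1) produces a candidate $\omega$ supported in $\Lambda_\Gamma$ whose Poisson-Helgason transform descends to a generalized quantum resonant state and must therefore vanish; the invertibility of the scattering operator $\mc{S}_M(\demi+k)$ from \eqref{SMiso}, together with the asymptotic identities \eqref{-1/2-kbis}, then forces $\omega=0$ and hence $V_0^j(\la_0)=0$.

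For part 4) (integer points $\la_0=-n\in-\nn$) the Fourier-mode detour is required, as $\mc{P}_{-n}$ has a finite-dimensional kernel. I would decompose $u=\sum_k u_k$ in the vertical direction, invoke Lemma \ref{recurrence_relation} and Proposition \ref{no_int_jord} to obtain $u_k=0$ for $|k|<n$ and the absence of Jordan blocks (the argument killing $u_0$ now uses Proposition \ref{caractreson} at $s_0=1-n$ combined with Lemma \ref{u0=0}, replacing the compact-case $L^2$ positivity). By \eqref{dbar}, $u_n\in\mc{D}'(M;\mc{K}^n)$ is holomorphic on $M$. The main obstacle, and the principal new geometric step, is showing that $u_n$ extends to a holomorphic section of $\mc{K}^n$ on all of $M_2$ satisfying the reality constraint $\mc{I}^*u_n=\bbar{u_n}$ of \eqref{HnH-n}: in the upper half-plane model, $\til{u}_n$ is a holomorphic section whose boundary trace is supported in $\Lambda_\Gamma\subset\rr\cup\{\infty\}$, and Schwarz reflection across the component $\Omega_\Gamma$ of the boundary produces the required extension to $\hh^2_-$ with the conjugation symmetry built in. Conversely, given $f\in H_n(M)$, extend by $\mc{I}$-symmetry to $M_2$, lift to $\hh^{2}\cup\Omega_\Gamma$, and build the higher modes from \eqref{recursive_def}, setting $u_{-k}:=\bbar{u_k}$; compatibility of the two definitions is exactly the content of $\mc{I}^*f=\bbar{f}$. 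The $L^2$ growth estimate from the compact proof applies verbatim on a compact piece of a fundamental domain meeting $\supp u$, giving distributional convergence of $\sum_k u_k$ to an element of $V_0^1(-n)$ with the required support in $\Lambda_+$ and wavefront set in $E_u^*$.
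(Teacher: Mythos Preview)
Your outline for parts 1) and 2) matches the paper's argument closely and is correct. There are, however, genuine gaps in parts 3) and 4).

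\textbf{Part 3), Jordan blocks.} The shortcut ``Jordan blocks on the Ruelle side are excluded because Theorem~\ref{mame} excludes them on the quantum side'' does not work at $\la_0=-\tfrac12$. The transfer formula \eqref{tilvarphi} reads $(\Delta_M+\la_0(1+\la_0))\varphi_k=(1+2\la_0)\varphi_{k-1}-\varphi_{k-2}$, and at $\la_0=-\tfrac12$ the coefficient $1+2\la_0$ vanishes. Thus a Ruelle Jordan pair $(u^{(0)},u^{(1)})$ produces two \emph{eigenfunctions} $\varphi_0,\varphi_1$ of $\Delta_M-\tfrac14$, not a quantum Jordan chain, and the absence of quantum Jordan blocks says nothing. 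The paper instead computes the boundary asymptotic of $\varphi_1={\pi_0}_*u^{(1)}$ explicitly via $\pl_\la\mc{P}_{-1/2}$ and finds a nonremovable $\log\rho$ term, which contradicts Proposition~\ref{caractreson}(2).

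\textbf{Part 4), vanishing of $u_0$.} Your hint ``Proposition~\ref{caractreson} at $s_0=1-n$ combined with Lemma~\ref{u0=0}'' is not enough: it only shows $u_0\in{\rm Res}_\Delta^1(1-n)$, and you would then need ${\rm Res}_\Delta^1(1-n)=0$, which is precisely what Corollary~\ref{selberg} deduces \emph{from} this theorem. The paper's argument is different and is where the non-elementary hypothesis (which you never invoke) actually enters: since $\til u_0\in{\rm Ran}(\mc{P}_{-n})$, one has $\til u_0=(1-|z|^2)^{1-n}L_n(z,\bar z)$ for a polynomial $L_n$; $\Gamma$-equivariance forces $L_n$ to vanish at every fixed point of $\Gamma$, hence on $\Lambda_\Gamma$, hence (by analyticity and non-elementarity) on all of $\mathbb{S}^1$, whence $u_0\in\rho^nC^\infty(\bbar M)\subset L^2$ and $u_0=0$.

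\textbf{Part 4), surjectivity.} Building $u=\sum_k u_k$ from $u_n\in H_n(M)$ via the recursion \eqref{recursive_def} works in the compact case, but here you must land in $V_0^1(-n)$, which requires $\supp(u)\subset\Lambda_+$. Each Fourier mode $u_k$ is a smooth (indeed analytic) section of $\mc{K}^k$ over all of $M$, so no individual term is supported in $\Lambda_+$; the support condition would have to emerge from a delicate cancellation in the full sum, which your $L^2$ growth bound (even if it made sense on the noncompact $SM$) does not detect. The paper avoids this entirely: it recovers a boundary distribution $\omega$ with $\supp(\omega)\subset\Lambda_\Gamma$ from $u_n$ (via $\mc{P}_0(\omega_\pm)=z^nf_n$ and $\bbar{z^nf_n}$, using temperedness of $f_n$ and the reality condition to get $\omega=\omega_++\omega_-$ supported in $\Lambda_\Gamma$), and then sets $\til u=\Phi_-^{-n}\mc{Q}_-(\omega)$, which is automatically supported in $B_-^{-1}(\Lambda_\Gamma)=\til\Lambda_+$. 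Your Schwarz-reflection idea for the injective direction is morally right but imprecise: the correct statement, which the paper derives from the explicit integral formula \eqref{fnz}, is that $f_n$ extends holomorphically across $\Omega_\Gamma$ with $e^{in\theta}f_n|_{\Omega_\Gamma}\in i\rr$; this is what encodes $i^{n+1}u_n\in H_n(M)$.
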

\begin{proof} \textbf{Case $\la_0\in \cc\setminus (-\demi-\demi \nn_0)$, injectivity.}
The map $X+\la_0$ is a linear nilpotent map preserving 
the finite dimensional vector space $V_0(\la_0):=\bigoplus_{j\geq 1}V_0^j(\la_0)$ 
of generalized Ruelle resonant states in $\ker U_-$. 
Thus there is a decomposition into Jordan blocks for $X$ on $V_0(\la_0)$: 
for each Jordan block of size 
$j$, one has a $u^{(0)}\in V_0^1(\la_0)$ and some 
$u^{(k)}\in V_0^{k+1}(\la_0)$ for $k\in [1, j]$ satisfying
$(X+\la_0)u^{(k)}=u^{(k-1)}$.  We lift each 
$u^{(k)}\in\mathcal D'(SM)$ to $S\hh^{2}$ and get $\til{u}^{(k)}\in \mc{D}'(S\hh^{2})$ 
so that $\gamma^*\til{u}^{(k)}=\til{u}^{(k)}$ for all $\gamma\in \Gamma$, and $\til{u}^{(k)}$ is 
supported in $\til{\Lambda}_+$, where $\til{\Lambda}_+$ is defined by \eqref{Lambada}.
Define for $k\geq 0$
\begin{equation}\label{varphiik} 
\varphi_{k}:={\pi_0}_*u^{(k)}, \quad \til{\varphi}_{k}:={\pi_0}_*\til{u}^{(k)}.
\end{equation} 
From $u^{(0)} \in V^1_0(\lambda_0)$ we have that $(X+\la_0)u^{(0)}=0$, $U_-u^{(0)}=0$, 
and $u^{(0)}$ is supported in $\Lambda_+$. The same equations hold for $\til{u}^{(0)}$ on $\hh^2$. 
Take the distribution $v^{(0)}:=\Phi_-^{-\la_0}\til{u}^{(0)}$ satisfying $Xv^{(0)}=0$. 
Then there exists $\omega^{(0)}\in \mc{D}'(\mathbb{S}^1)$ 
so that $\mc{Q}_-\omega^{(0)}=v^{(0)}$ by  (\ref{Qpmiso}). Since 
${\rm supp}(\til{u}^{(0)})\subset \til{\Lambda}_+$ and $\mc{Q}_-\omega^{(0)}=B_-^*\omega^{(0)}$, 
using (\ref{Lambada}) we directly get that $\supp(\omega^{(0)})\subset \Lambda_{\Gamma}$.
Using $\gamma^* \tilde u^ {(0)}=\tilde u^{(0)}$ together with (\ref{changeofPhi}),
we have that for any $\gamma\in \Gamma$
\[
\gamma^*\omega^{(0)}=N_\gamma^{-\la_0}\omega^{(0)} \textrm{ with }N_\gamma(\nu)=|d\gamma(\nu)|^{-1}.
\]
We get that $\til{\varphi}_{0}=\mc{P}_{\la_0}(\omega^{(0)})={\pi_0}_*\til{u}^{(0)}$ satisfies
\[
(\Delta_{\hh^2}+\la_0(1+\la_0))\til{\varphi}_{0}=0 
\]
on $\hh^{2}$. Now by Lemma \ref{poissoniso}, $\til{\varphi}_0\not=0$ if 
$\la_0\notin -\nn_0$ and $u^{(0)}\not=0$, thus $\varphi_0$ is a non-zero solution on $M$ of
\[
(\Delta_M+\la_0(1+\la_0))\varphi_{0}=0. 
\]
To prove that $s_0=\la_0+1$ is a quantum resonance, we will use Proposition \ref{caractreson}, 
and for that it is sufficient to prove that $\varphi_{0}\in \rho^{s_0}C_{\rm ev}^\infty(\bbar{M})$, 
and in fact 
$\varphi_{0}\in \rho^{s_0}C^\infty(\bbar{M})$ is sufficient since we assumed $\la_0\notin -\nn$. 
Take a point $p\in \pl \bbar{M}$, and consider $\til{p}\in \Omega_{\Gamma}$ a lift of $p$ to 
$\hh^{2}\cup\Omega_{\Gamma}$. To prove the desired statement, we take the boundary 
defining function $\rho_0(x):=2(1-|x|)/(1+|x|)$ in the closed ball $\bbar{\hh}^{2}$ and we will show that 
$\rho_0^{-s_0}\til{\varphi}_{0}$ is a smooth function near the boundary of $\hh^{2}\cup \Omega_\Gamma$.
Note that 
$\rho_0(x)^{-1}P(x,\nu)$ is smooth outside the subset 
$\{(x,\nu) \in \bbar{\hh}^2\x \mathbb{S}^1; x\not=\nu\}$ and since
$\omega^{(0)}$ is supported in $\Lambda_{\Gamma}$, we deduce directly that 
$\rho_0^{-s_0}\mc{P}_{\la_0}(\omega^{(0)})$ is smooth in a neighbourhood of 
$\tilde p$ in $\hh^{2}\cup \Omega_\Gamma$. We have proved that $\varphi_0$ is a quantum resonant 
state which in addition has asymptotic behaviour given in terms of the distribution 
$\omega^{(0)}$: there is an explicit constant $C(s_0)\not=0$ so that
\begin{equation}\label{boundaryvalueres} 
\pi_{\Gamma}^*([\rho^{-s_0}\varphi_0]|_{\pl \bbar{M}})=C(s_0)\eta^{s_0}\mc{S}(s_0)(\omega^{(0)})
\end{equation}
where $\eta\in C^\infty(\Omega_\Gamma)$ is defined by 
$\pi_\Gamma^*(\rho) \eta=\rho_0+\mc{O}(\rho_0^2)$ near $\Omega_{\Gamma}$.
For the generalized resonant states, we proceed like in the compact case: define for $k\leq j$ 
\[
v^{(k)}:=\Phi_-^{-\la_0}\sum_{\ell=0}^{k} \frac{(\log \Phi_-)^{k-\ell}}{(k-\ell) !}\til{u}^{(\ell)}
\] 
which satisfies $Xv^{(k)}=0$ and $U_-v^{(k)}=0$. Thus there is a distribution 
$\omega^{(k)}$ supported in $\Lambda_{\Gamma}$ such that $\mc{Q}_-\omega^{(k)}=v^{(k)}$, 
and using that $\gamma^*\til{u}^{(k)}=\til{u}^{(k)}$ for all $\gamma\in \Gamma$, we get
\begin{equation}\label{covariance2}
\gamma^*v^{(k)}=N_\gamma^{-\la_0}\sum_{\ell=0}^k\frac{(\log N_\gamma)^\ell}{\ell !}v^{(k-\ell)}
, \quad 
\gamma^*\omega^{(k)}=N_\gamma^{-\la_0}\sum_{\ell=0}^k\frac{(\log N_\gamma)^\ell}{\ell !}\omega^{(k-\ell)}.
\end{equation}
Writing $\til{u}^{(k)}$ in terms of the $v^{(\ell)}$ and using \eqref{deriveepoisson}, we have 
\[
\til{u}^{(k)}= \Phi_-^{\la_0} \sum_{\ell=0}^k \frac{(-\log \Phi_-)^{\ell}v^{(k-\ell)}}{\ell !},\quad 
 \til{\varphi}_{k}=\sum_{\ell=0}^k \frac{ (-1)^{\ell}\pl_{\la}^{\ell}\mc{P}_{\la_0}(\omega^{(k-\ell)})}{\ell !}.
\]
By \eqref{deriveepoisson} or the proof of Lemma \ref{u0=0} we deduce that
\begin{equation} \label{tilvarphi}
(\Delta_{\hh^2}+\la_0(1+\la_0))\til{\varphi}_{k}=(1+2\la_0)\til{\varphi}_{k-1}-\til{\varphi}_{k-2}.
\end{equation} 
(with the convention $\til{\varphi}_{i}=0$ for $i<0$).
This implies the following identities on $M$
\[
(\Delta_{\hh^2}+\la_0(1+\la_0))\sum_{\ell=1}^k\frac{\varphi_{\ell}}{(1+2\la_0)^{k+1-\ell}}=\varphi_{k-1}.
\]
Using that $\pl_{\la}^k((P(x,\nu))^{\la+1})|_{\la_0}$ is of the form 
$\rho_0^{\la_0+1} \sum_{\ell=0}^k (\log(\rho_0(x)))^kH_k(x,\nu)$ for some functions $H_k$ 
smooth in $\{(x,\nu) \in \bbar{\hh}^{2}\x \mathbb{S}^1; x\not=\nu\}$, we deduce 
like we did for $\varphi_{0}$ that 
$\varphi_{k}\in \bigoplus_{\ell=0}^k \rho^{s_0}\log(\rho)^{\ell}C^\infty(\bbar{M})$ 
with $s_0=\la_0+1$. Then, by Proposition \ref{caractreson}, the function $\varphi_{k}$ 
is a quantum generalized resonant state in ${\rm Res}_{\Delta}^{k+1}(s_0)$ for each 
$k=0,\dots,j$, and ${\pi_0}_*:V_0^k(\la_0)\to {\rm Res}_{\Delta}^k(\la_0+1)$ is injective.

 \textbf{Case $\la_0\in \cc\setminus (-\demi-\demi \nn_0)$, surjectivity.}
Next we will show that this map is also surjective: let $s_0\notin \tfrac{1}{2}-\tfrac{1}{2}\nn_0$
be a pole of $R_{\Delta}(s)$ and denote by $\Pi^{\Delta}_{s_0}$ its residue. Recall that 
$F_{s_0}:={\rm Ran}(\Pi^{\Delta}_{s_0})$ is finite dimensional and that 
$A_{s_0}:=(\Delta_M -s_0(1-s_0))|_{F_{s_0}}$ is a linear nilpotent operator preserving this
finite dimensional space. Thus there is a decomposition into Jordan blocks 
for $A_{s_0}$ on $F_{s_0}$: for each Jordan block,
there is a $\phi_{0}\in F_{s_0}$ so that $A_{s_0}\phi_{0}=0$ and some $\phi_{k}\in 
F_{\la_0}$ for 
$k\leq j$ so that $A_{s_0}\phi_{k}=\phi_{k-1}$.  Note that by definition (\ref{Resj}) 
we have $\phi_k\in \mathrm{Res}^{k+1}_{\Delta_M}(s_0)$. We lift 
$\phi_{k}$ to $\hh^{2}$, we get $\til{\phi}_{k}:=\pi_\Gamma^*\phi_{k}\in C^{\infty}(\hh^{2})$.
Using $A_{s_0}\phi_{k}=\phi_{k-1}$ for each $k\geq 1$, we see that there exist $\til{\varphi}_{k}\in C^\infty(\hh^{2})$ so that $\til{\varphi}_{0}=\til{\phi}_{0}$ and 
satisfying \eqref{tilvarphi}: $\til{\varphi}_{k}$ are linear combinations of $(\til{\phi}_{\ell})_{\ell=0,\dots,k}$ 
and are thus $\Gamma$-invariant and descend to some function $\varphi_k$.
We will then show that there exist $\omega^{(k)}\in \mc{D}'(\mathbb{S}^1)$ 
supported in $\Lambda_\Gamma$ so that for all $\gamma\in \Gamma$ and $k\leq j$
\begin{equation}\label{covarianceomegaik}
 \til{\varphi}_{k}=\sum_{\ell=0}^k \frac{ (-1)^{\ell}\pl_{\la}^{\ell}\mc{P}_{\la_0}(\omega^{(k-\ell)})}{\ell !}, \quad \gamma^*\omega^{(k)}=N_\gamma^{-\la_0}\sum_{\ell=0}^k\frac{(\log N_\gamma)^\ell}{\ell !}\omega^{(k-\ell)}\end{equation}
where $\la_0=s_0-1$.
We prove this by induction on $k$. For $k=0$, this is a consequence of Lemma \ref{technical}. 
Suppose that \eqref{covarianceomegaik} is 
satisfied with $k$ replaced by $m$ for all $m\leq k$, and we will show that the same hold at 
order $k+1$. We set 
\[
\psi_{k+1}:=\til{\varphi}_{k+1}+\sum_{\ell=0}^{k}\frac{(-1)^\ell\pl_{\la}^{\ell+1}\mc{P}_{\la_0}(\omega^{(k-\ell)})}{(\ell+1)!}
\]
and using \eqref{tilvarphi} and  \eqref{deriveepoisson}
\[ \begin{split}
A_{s_0}\psi_{k+1} = & -(1+2\la_0)
\sum_{\ell=0}^{k}\frac{(-1)^\ell\pl_{\la}^{\ell}\mc{P}_{\la_0}(\omega^{(k-\ell)})}{\ell!}- \sum_{\ell=1}^{k}\frac{(-1)^\ell\pl_{\la}^{\ell-1}\mc{P}_{\la_0}(\omega^{(k-\ell)})}{(\ell-1)!}
 \\ 
& +(1+2\la_0)\til{\varphi}_{k}-\til{\varphi}_{k-1}=0
\end{split}\]
where the last equality follows by using the first equation of \eqref{covarianceomegaik} at order $k-1$ and $k$. 
The surjectivity of the Poisson-Helgason transform in Lemma \ref{poissoniso} implies that 
there exists $\omega^{(k+1)}\in \mc{D}'(\mathbb{S}^1)$ such that 
$\psi_{k+1}=\mc{P}_{\la_0}(\omega^{(k+1)})$.
Now by definition of $\psi_{k+1}$, we have near each point $p\in \Omega_\Gamma$ 
that $\psi_{k+1} \in \bigoplus_{\ell=0}^{k+1} (\log\rho_0)^\ell \rho^{s_0}C^\infty(\hh^2\cup\Omega_\Gamma)$.
Since $(\Delta_{\hh^2} -s_0(1-s_0))\psi_{k+1} = 0$, in fact one has
$\psi_{k+1} \in \rho^{s_0}C^\infty(\hh^2\cup\Omega_\Gamma)$. Then the same 
arguments as in the proof of Lemma \ref{technical} imply that $\omega^{(k+1)}$ 
is supported in $\Lambda_{\Gamma}$.
This shows the first equation of \eqref{covarianceomegaik} at order $k+1$. Using 
that $\gamma^*\til{\varphi}_{k+1}=\til{\varphi}_{k+1}$ for all $\gamma\in \Gamma$, 
the induction assumption \eqref{covarianceomegaik} implies that 
\[
\begin{split} 
\gamma^*\psi_{k+1}-\psi_{k+1} =& \mc{P}_{\la_0}\Big(\sum_{\ell=1}^{k+1}\frac{(\log N_\gamma)^\ell 
\omega^{(k+1-\ell)}}{\ell!}\Big) \end{split}
\]
but this is also equal to $\mc{P}_{\la_0}(N_\gamma^{\la_0}\gamma^*\omega^{(k+1)}-\omega^{(k+1)})$, which by injectivity of the Poisson-Helgason transform implies 
\[
N_\gamma^{\la_0}\gamma^*\omega^{(k+1)}-\omega^{(k+1)}= \sum_{\ell=1}^{k+1}\frac{(\log N_\gamma)^\ell \omega^{(k+1-\ell)}}{\ell!}
\]
which is exactly \eqref{covarianceomegaik} for $\omega^{(k+1)}$. Now we define the distributions on $S\hh^2$
\[
v^{(k)}:=\mc{Q}_-\omega^{(k)}, \quad  
\til{u}^{(k)}:=\Phi_-^{\la_0} \sum_{\ell=0}^k \frac{(-\log \Phi_-)^{\ell}v^{(k-\ell)}}{\ell !}.
\]
By construction we have, for each  $k\geq 0$, $(X+\la_0)\til{u}^{(k)}=\til{u}^{(k-1)}$ and 
$U_-\til{u}^{(k)}=0$ 
(with the convention that $\til{u}^{(-1)}=0$) and $\til{u}^{(k)}$ is supported in $\til{\Lambda}_+$.  
By a direct application of \eqref{covarianceomegaik} and \eqref{changeofPhi}, we have $\gamma^*\til{u}^{(k)}=\til{u}^{(k)}$ for all $k$ and $\gamma\in \Gamma$, which implies that the distributions $\til{u}^{(k)}$ descend to distributions $u^{(k)}$ on $SM$ supported in $\Lambda_+$ and 
satisfying $(X+\la_0)u^{(k)}=u^{(k-1)}$ and $U_-u^{(k)}=0$. 
Finally, from the equation $(X+\la_0)u^{(k)}=u^{(k-1)}$ the wave-front set of
$u_{(k)}$ is contained in the annulator of $E_0=\rr X$  by elliptic regularity, 
and similarly from $U_-u^{(k)}=0$ it is also contained in the annulator of $U_-$, 
thus it has to be contained in $E_{u}^*$ (which is $E_+^*$ over $\Lambda_+$). 
Notice also that ${\pi_0}_*u^{(k)}=\varphi_k$ for each $k$.
This implies that $u^{(k)}\in V_0^k(\la_0)$ and the map ${\pi_0}_*:V_0^k(\la_0)\to {\rm Res}_{\Delta}^k(\la_0+1)$ is surjective.

\textbf{Case $\la_0\in -\demi-\nn$}. Note that $\Res^j_{\Delta}(1/2-k) = 0$
has been already shown in Proposition \ref{even}. In order to see $V_0^j(-1/2-k) =0$,
we use a similar argument as above: If $u\in V_0^1(\la_0)$ with $\la_0=-1/2-k$ for $k\in\nn$, then
$\varphi:={\pi_0}_*u$ solves $(\Delta_M-s_0(1-s_0))\varphi=0$ with $s_0=\la_0+1$. 
Furthermore,
by Lemma \ref{poissoniso}, there is $\omega\in \mc{D}(\mathbb{S}^1)$ supported in 
$\Lambda_\Gamma$ so that $\gamma^*\omega=N_\gamma^{\la_0} \omega$ for all $\gamma\in\Gamma$ and 
$\til{\varphi}:=\pi_{\Gamma}^*\varphi=\mc{P}_{\la_0}(\omega)$. 
Using \eqref{weakasymptotic0} and \eqref{-1/2-kbis}, we see that 
$\til{\varphi}=\rho_0^{1/2-k}F_1+\rho_0^{1/2+k}\log(\rho_0) F_2$ near each point 
$\til{p}\in\Omega_\Gamma$, where $F_j$ are smooth functions on $\hh^2\cup\Omega_\Gamma$ 
near $\til{p}$. This implies that 
$\varphi\in \rho^{1/2-k}C^\infty(\bbar{M})\oplus \rho^{1/2+k}\log(\rho)C^\infty(\bbar{M})$. 
But we also have $\varphi=\mc{E}_M(1/2+k)f$ if $f:=[\rho^{-1/2+k}\varphi]|_{\pl\bbar{M}}$ 
by the properties of $\mc{E}_M(s)$ (see \eqref{poissonM}).
As in the proof of claim 1), the fact that $\supp \omega \subset \Lambda_\Gamma$ implies
the vanishing of the $\rho^{-1/2+k}\log(\rho)$ terms in the asymptotic of $\varphi$. This implies by
\eqref{sppoints} that $\mc{S}_M(1/2+k)f=0$, which by \eqref{SMiso} shows that $f=0$, and 
thus $\varphi=0$, proving the claim 2).

\textbf{Case $\la_0=-1/2$.}  The arguments above show that each Ruelle 
resonant state $u\in V_0^{1}(-1/2)$ produces a quantum resonance $\varphi={\pi_0}_*u$ 
whose lift to $\hh^2$ is $\mc{P}_{\la_0}(\omega)\in \rho^{1/2}C^\infty(\bbar{M})$ 
at $s_0=1/2$ for some $\omega\in \mc{D}'(\mathbb{S}^1)$ supported in 
$\Lambda_\Gamma$ and satisfying $\gamma^*\omega=N_\gamma^{1/2}\omega$ for all 
$\gamma\in \Gamma$. If there is 
an element $u'\in V_0^{2}(\la_0)$, then like in Theorem \ref{noninteger} we have $\varphi':={\pi_0}_*u'$
satisfying $(\Delta_M-1/4)\varphi'=(2\lambda_0+1)\varphi =0$, with $\pi_{\Gamma}^*(\varphi')=:\mc{P}_{\la_0}(\omega')-\pl_{\la}\mc{P}_{\la_0}(\omega)$
for some $\omega'\in \mc{D}'(\mathbb{S}^1)$ supported in $\Lambda_\Gamma$. We have 
\[ \pl_{\la}\mc{P}_{-1/2}(\omega)(x)=\log(1-|x|^2)\mc{P}_{-1/2}(\omega)(x)+
\int_{\mathbb{S}^1}P(x,\nu)^{1/2}\log (|x-\nu|^2)\omega(\nu)d\nu\]
and, using that ${\rm supp}(\omega)\in \Lambda_\Gamma$ and \eqref{Sen1/2}, 
this implies directly that near a point $\til{p}\in \Omega_{\Gamma}$, 
\[
\pl_\la\mc{P}_{-1/2}(\omega)(x,\nu)=\sqrt{2}\log \rho_0(x)(\pl_{\la}\mc{S}(1/2)\omega)(\nu)+\mc{O}(1)
\]
as $|x|\to 1$. Since 
$\mc{P}_{\la_0}(\omega')\in \rho_0^{1/2}C^\infty(\hh^2\cup\Omega_{\Gamma})$, we get that 
$\varphi'=\log(\rho) f+\mc{O}(1)$ near $\pl\bbar{M}$, for some non-zero 
$f\in C^\infty(\pl\bbar{M})$. Therefore by 2) in Proposition \ref{caractreson} 
the function $\varphi'$ is not a generalized 
resonant state. 
This shows that $-1/2$ is a pole of order at most $1$ for 
$R_X(\la)$, there is no Jordan blocks
and ${\pi_0}_*: V_0^1(-1/2)\to {\rm Res}_{\Delta}^1(-1/2)$ is injective. The 
surjectivity works as for the cases above. 

\textbf{Case $\la_0\in -\nn_0$, injectivity.}
 By Proposition 
\ref{no_int_jord},  among generalized states at $\la_0=-n$ killed by $U_-$ and ${\pi_0}_*$, 
there can be only resonant states.
Let $u$ be a Ruelle resonant state satisfying $(X-n)u=0$, $U_-u=0$ and ${\rm supp}(u)\subset 
\Lambda_+$. We can always assume that $u$ is real valued: indeed, since the spectral 
parameter $-n$ is real valued, there is a basis of real-valued resonant states. 
The space $V_0(-n)$ can then be considered as a real vector space.

First, we assume that  ${\pi_0}_*u=0$. Consider the Fourier components $u_k$ in the 
fiber variables, then by the proof of Proposition \ref{no_int_jord}, we have 
$u_k=0$ for all $|k|<n$, and by \eqref{recursion}
we also get $\eta_-u_n=0$ and $\eta_+u_{-n}=0$. In particular $\bbar{\pl} u_n=0$ 
and $\pl u_{-n}=0$ when we view $u_{\pm n}$ as (distributional) sections of 
$\mc{K}^{\pm n}$. Notice by ellipticity that $u_{\pm n}$ are smooth and actually 
analytic. We will denote by $\til{u}_{\pm n}=\pi_{\Gamma}^*u_{\pm n}$ their 
lift to $\hh^2$, and in the ball model we can write $\til{u}_{n}=f_{n}dz^n$ and 
$\til{u}_{-n}=f_{-n}d\bar{z}^n$
for some holomorphic (resp. antiholomorphic) functions $f_n$ (resp. $f_{-n}$) on $\hh^2$ satisfying 
\[ \forall \gamma\in \Gamma, \,\, \gamma^*f_n=(\pl_z\gamma)^{-n}f_n \textrm{ and }
\gamma^*f_{-n}=(\bbar{\pl_{z}\gamma})^{-n}f_{-n}.\]
Take the distribution $v:=\Phi_-^{n}\til{u}$ where $\til{u}:=\pi_\Gamma^*u$: 
we get  $Xv=0$ and $U_-v=0$ thus there exists $\omega \in \mc{D}'(\mathbb{S}^1)$ 
so that $\mc{Q}_-\omega=v$ by \eqref{Qpmiso}. 
Since ${\rm supp}(\til{u})\subset B_-^{-1}(\Lambda_\Gamma)$, we get directly 
that $\supp(\omega)\subset \Lambda_\Gamma$ and
by \eqref{changeofPhi}, for any $\gamma\in \Gamma$, $\gamma^*\omega=
N_\gamma^{n}\omega$. We want to write the Fourier mode $\til{u}_k$ (in the fiber variable) 
in terms of $\omega$: Therefore we write $z=x_1+ix_2\in\hh^2$ and identify the unit tangent vector
$\demi (1-|z|^2)(\cos(\theta)\pl_{x_1}+\sin(\theta)\pl_{x_2})\in S_z\hh^2$ with $e^{i\theta}$. We get
\[ \begin{split}
\til{u}_k(z)= & (2\pi)^{-1}\Big(\int_{S_z\hh^2}\til{u}(z,e^{i\theta})e^{-ik\theta}d\theta\Big)
\Big(\frac{2}{1-|z|^2}\Big)^kdz^k\\
=& 2^k(2\pi)^{-1}\Big(\int_{0}^{2\pi} \omega(e^{i\alpha})e^{-ik\alpha}
|1-ze^{-i\alpha}|^{2(n-1)}\frac{(\bar{z}e^{i\alpha}-1)^k}{(1-ze^{-i\alpha})^k} d\alpha  \Big)
\frac{dz^k}{(1-|z|^2)^{k+n-1}}
\end{split}
\]  
where we used the change of variable $e^{i\theta}=B_z^{-1}(e^{i\alpha})$ defined by \eqref{formulaBinv}.
For $|k|<n$ we have $\til{u}_k=0$, thus by evaluating at $z=0$ we get
\begin{equation}\label{modek0}
\forall k \in (-n,n),\quad  0=\int_{0}^{2\pi} \omega(e^{i\alpha})e^{-ik\alpha}d\alpha.
\end{equation}
For $k=n$, we know that $\til{u}_n=f_ndz^n$ with $f_n$ holomorphic on $\hh^2$, thus 
we deduce that 
\begin{equation}\label{fnz} 
f_n(z)=\frac{(-2)^{n}}{2\pi}\int_{0}^{2\pi} \omega(e^{i\alpha})
\frac{e^{-in\alpha}}{1-ze^{-i\alpha}} d\alpha.
\end{equation}
We deduce from this that $f_n(z)$ has a series expansion converging in $|z|<1$ given by 
\[ 
f_n(z)=\frac{(-2)^{n}}{2\pi}\sum_{k=0}^\infty \omega_kz^k , \quad \omega_k:= \cjg \omega,e^{-i(n+k)\alpha}\cjd,
\]
where here we notice that $|\omega_k|=\mc{O}((1+|k|)^N)$ for some $N$, since $\omega$
is in some Sobolev space on $\mathbb{S}^1$ (here and below, $\cjg\cdot,\cdot\cjd$ denotes 
the bilinear distributional pairing on $\mathbb{S}^1$ with respect to the natural measure 
$d\alpha$ of mass $2\pi$). From \eqref{fnz} and since ${\rm supp}(\omega)\subset \Lambda_{\Gamma}$, 
we see that $f_n(z)$ extend holomorphically to $\cc\setminus \Lambda_\Gamma$. The section 
$f_n(z)dz^n$ is holomorphic on $\cc\setminus {\Lambda_\Gamma}$ and $\Gamma$ equivariant, 
thus descend to a holomorphic section of $\mc{K}^n$ on $M_2$ with the notation of Section 
\ref{cccsurface}. Similarly, we get (using that $\omega$ is real-valued)
\[
f_{-n}(z)=\frac{(-2)^{n}}{2\pi}\int_{0}^{2\pi} \omega(e^{i\alpha})
\frac{e^{in\alpha}}{1-\bar{z}e^{i\alpha}} d\alpha= \frac{(-2)^{n}}{2\pi}\sum_{k=0}^\infty \bbar{\omega_k}\bar{z}^k=\bbar{f_n(z)}.
\]
Now for each $\psi\in C^\infty(\mathbb{S}^1)$, write $\psi=\sum_{k\in\zz}\psi_ke^{ik\theta}$, 
then for 
$r<1$ we get
\[
\int_{0}^{2\pi}f_n(re^{i\theta})\psi(\theta)d\theta=(-2)^n
\sum_{k\geq 0}\omega_kr^k\psi_{-k}
\]
and this converges to $(-2)^n\cjg \Pi_+(\omega e^{-in\theta}),\psi\cjd $ as $r\to 1$ where 
$\Pi_+$ is the Szeg\"o projector, i.e. the projector $\indic_{[0,\infty)}(-i\pl_{\theta})$ 
on the non-negative Fourier modes on $\mathbb{S}^1$. By using \eqref{modek0},  this means 
that $f_n$ has a weak limit on $\mathbb{S}^1$ and 
$f_n|_{\mathbb{S}^1}=(-2)^ne^{-in\theta}\Pi_+(\omega)$ in the weak sense. 
Similarly, we have $f_{-n}|_{\mathbb{S}^1}=(-2)^ne^{in\theta}\Pi_-(\omega)$ if
$\Pi_-:=\indic_{(-\infty,0]}(-i\pl_{\theta})$.
Now by injectivity of the Poisson-Helgason transform at the spectral parameter $0$ we obtain 
$C_n\in \rr^*$ such that 
\[
\mc{P}_{0}(\omega)=\mc{P}_{0}(\Pi_+(\omega))+\mc{P}_{0}(\Pi_-(\omega))=
C_n(z^nf_n+\bbar{z^nf_n})
\]
But since $\omega|_{\Omega_{\Gamma}}=0$ the harmonic function $z^nf_n+\bbar{z^nf_n}$
vanishes on $\Omega_{\Gamma}$ thus $e^{in\theta}f_n|_{\Omega_{\Gamma}}\in i\rr$, which means 
that $i^{n+1}\iota_{\pl \bbar{M}}^*u_n$ is a real-valued symmetric tensor on $\pl \bbar{M}$. 
This is equivalent to say that $i^{n+1}u_n\in H_n(M)$. 
Moreover, the map $u\mapsto i^{n+1}u_n\in H_n(M)$ is injective since $u_n=0$ implies $\omega=0$ and thus 
$u=0$. 

\textbf{Case $\la_0\in-\nn$, surjectivity.} Conversely, let $u_n\in i^{-n-1}H_n(M)$ and consider its lift $\til{u}_n=f_ndz^n$ to $\hh^2$. The 
holomorphic function $f_n$ satisfies $f_n(\gamma(z))=(\pl_z\gamma(z))^{-n}f_n(z)$ 
for all $\gamma\in\Gamma$, or equivalently $\gamma^*\til{u}_n=\til{u}_n$, and we can assume 
that $e^{in\theta}f_n|_{\Omega_\Gamma}\in i\rr$. 
The tensor $u_n$ is bounded on $\bbar{M}$ with respect to the hyperbolic metric thus 
$|f_n(z)dz^n|_{g_{\hh^2}}\in L^\infty(\hh^2)$, and since $|2dz^n/(1-|z|^2)^n|_{g_{\hh^2}}=1$, 
we deduce that $z\mapsto f_n(z)(1-|z|^2)^n$ is bounded in the unit disk and therefore $f_n$ is 
tempered. In particular there exists $\omega_\pm\in\mc{D}'(\mathbb{S}^1)$ 
 so that $\mc{P}_{0}(\omega_+)=z^nf_n$ and $\mc{P}_{0}(\omega_-)=\bbar{z^nf_n}$, and in fact $\omega_-=\bbar{\omega_+}$. We have $\omega:=\omega_++\omega_-$ which is supported on $\Lambda_\Gamma$ since $\omega$ is the boundary value of ${\rm Re}(z^nf_n)$ to $\mathbb{S}^1$ in the weak sense.
Next we want to describe the covariance of $\omega$ with respect to each $\gamma\in \Gamma$: write 
$\gamma(e^{i\alpha})=e^{i\mu(\alpha)}$ for the action on $\mathbb{S}^1$, then $|d\gamma(e^{i\alpha})|=\mu'(\alpha)$ 
we have  $\gamma^*(z^nf_n(z))=(z\frac{\pl_z\gamma(z)}{\gamma(z)})^{-n}f_n(z)$, which when restricted on $\mathbb{S}^1$ gives 
\begin{equation}\label{equivome} 
\gamma^*\omega =\Big(\frac{-i\pl_\alpha (\gamma(e^{i\alpha})}{\gamma(e^{i\alpha})}\Big)^{-n}\omega=
|d\gamma|^{-n}\omega.\end{equation}
Thus $\til{u}=\Phi_-^{-n}\mc{Q}_-(\omega)\in \mc{D}'(\hh^2)$ solves $(X+n)\til{u}=0$ and $U_-\til{u}=0$, 
it is $\Gamma$-invariant by using \eqref{equivome}  and has support in $\til{\Lambda}_+=B_-^{-1}(\Lambda_\Gamma)$. This implies that 
$\til{u}$ descends to a Ruelle resonance $u$, with support in $\Lambda_-$ and with ${\rm WF}(u)\subset E_+^*$ by ellipticity arguments as before. The map $u_n\mapsto u$ is injective since 
$u_n\to \omega$ is injective.
Moreover by construction $\omega$ has vanishing $k$-Fourier components for all $|k|<n$ on $\mathbb{S}^1$, thus it is in the kernel of $\mc{P}_{-n}$, which means that 
${\pi_0}_*u=0$, and thus ${\pi_k}_*u=0$ for all $|k|<n$ by Lemma \ref{recurrence_relation} (just like in the proof of Proposition \ref{no_int_jord}).

To conclude the proof we have to prove that a Ruelle resonant state $u\in V_0^1(-n)$ 
satisfying $u_0={\pi_0}_*u\not=0$ does not exist. If $u_0\not=0$, we 
have by Lemma \ref{poissoniso} that $(\Delta_M-n(1-n))u_0=0$.
The lift $\til{u}_0=\pi_{\Gamma}^*u_0$ to $\hh^2$
must be in ${\rm Ran}(\mc{P}_{-n})$, which by \eqref{kerP-n} implies that 
$\til{u}_0\in \rho_0^{1-n}C^\infty(\bbar{\hh}^2)$ is an element of $\ker(\Delta_{\hh^2}-n(1-n))$
of the form
\[ \til{u}_0(z)=(1-|z|^2)^{1-n}L_n(z,\bar{z})\]
for some polynomial $L_n$ of degree $2n-2$. Since for each $\gamma\in\Gamma$ we have
$(1-|\gamma(z)|^2)=(1-|z|^2)|\gamma'(z)|$, we deduce from $\gamma^*\til{u}=\til{u}$ that 
\[
L_n(\gamma(z),\bbar{\gamma(z)})=|\gamma'(z)|^{n-1}L_n(z,\bar{z}).
\]
Taking $z=z_\pm$ to be the two fixed points of $\gamma$, we deduce that 
$L_n(z_\pm,\bbar{z_\pm})=0$ since $|\gamma'(z_\pm)|\not=1$. Therefore $L_n$ is a polynomial 
in $(z,\bar{z})$ vanishing on the limit set $\Lambda_\Gamma$, and thus it vanishes on the whole $\mathbb{S}^1$ by analyticity, if $\Gamma$ is non-elementary. We deduce that $\til{u}_0=\mc{O}(\rho_0^{2-n})$ at 
$\mathbb{S}=\pl\hh^2$, and thus $u\in\rho^{2-n} C^\infty(\bbar{M})$. From the form of $\Delta_M$ near $\rho=0$ given by \eqref{DeltaM}, a Taylor expansion in $\rho=0$ of the equation $(\Delta_M-n(1-n))u_0=0$ implies 
that actually $u_0\in \rho^{n}C^\infty(\bbar{M})\subset L^2(M)$, and therefore $u_0=0$ since 
$n(1-n)\leq 0$, leading to a contradiction. 
\end{proof}

\textbf{The case of an elementary group.} Let us  briefly discuss the case of an elementary group generated by one transformation 
$\gamma\in {\rm PSL}_2(\rr)$ where we only need to focus on the spectral points $-n\in-\nn$. We can assume that the two fixed points of $\gamma$ in the disk model are $\pm 1$ (with $-1$ being the repulsive one). Mapping
the disk to the upper half-plane so that $\mp 1$ is mapped to $0$ and $\pm 1$ to $\infty$, the transformation $\gamma$ is conjugated to $\gamma_\mp: z\mapsto e^{\pm \ell}z$ for some $\ell>0$.
By the discussion in the proof of Theorem \ref{classicquantic}, the Ruelle resonant states at $-n\in-\nn$ are in correspondence with the $\omega\in \mc{D}'(\mathbb{S}^1)$ supported in $\{-1,1\}$ and satisfying $\gamma^*\omega=|N_\gamma|^{n}\omega$. There can only be Dirac masses and its derivatives at $\pm 1$. To analyse the restriction of $\omega$ on a small neighborhood $O_{\mp}\subset \mathbb{S}^1$ of $\mp 1$, we use the upper half-plane model, so $\omega|_{O_{\mp}}$ becomes (after conjugation) a distribution 
$\omega_\mp\in \mc{D}'(\rr)$ supported in $\{0\}$ and satisfying $\gamma_\mp^*\omega_\mp=e^{\mp n\ell}\omega_\mp$. The only solutions are of the form $c_\mp \delta_0^{(n-1)}$ for $c_\mp \in\cc$ if $\delta^{(j)}_0$ denotes the $j$-th derivative of the Dirac mass $\delta_0$ at $0$ in $\rr$. 
The same argument shows that there is no generalised resonant state (no Jordan blocks) for the operator $X$, i.e. $V_0^j(-n)=V_0^1(-n)$ for $j>1$, and the space of Ruelle resonant states at $-n$ is exactly of 
dimension $2$. The Poisson kernel in the half space model is 
$P(x,y,x')=\frac{y}{y^2+|x-x'|^2}$, where $z=x+iy\in\hh^2$ and $x'\in\rr$ is the point at infinity. We can just compute that in the upper half-space model, where the repulsive fixed point $-1\in \mathbb{S}^1$ is mapped to $0$ and the repulsive $+1\in \mathbb{S}^1$ to $\infty$, we have 
\[
\mathcal P_{-n}(\omega_-)=c_-\cjg  \delta_{x'=0}^{(n-1)} , \tfrac{(y^2+|x-x'|^2)^{n-1}}{y^{n-1}}\cjd=c_-Q(x,y)/y^{n-1}.
\]
Here $Q$ is a homogeneous polynomial of degree $n-1$ in $\cc$, satisfying $Q(-x,y)=(-1)^{n-1}Q(x,y)$. The same argument shows that in the half-plane model where $-1\in \mathbb{S}^1$ is mapped to $\infty$ and $+1\in \mathbb{S}^1$ to $0$, shows that 
$\mathcal P_{-n}(\omega_+)=c_+Q(x,y)/y^{n-1}$. On the other hand, the mapping from one half-plane model to the other can be chosen to be $A: z\mapsto -1/z$. Due to the parity property of $Q(x,y)$ in $x$, we easily see that $Q(A(z))/({\rm Im}(Az))^{n-1}=(-1)^{n-1}Q(z)/({\rm Im}(z))^{n-1}$. This shows that ${\pi_0}_*$ maps $V_0^1(-n)$ to a $1$-dimensional space ${\rm Res}_{\Delta}^1(-n+1)$ of quantum resonant states. The elements in $\ker {\pi_0}_*\cap V_0^1(-n)$ are in correspondence with $H_n(M)$, as we discussed in the proof of Theorem \ref{classicquantic}, and this space has real dimension equal to $1$. The quantum resonances at $s=-n+1$ are computed in \cite[Chapters 5.1 and 8.2]{Bo}: there are poles of order $2$, there is a $1$-dimensional space of quantum resonant states and the multiplicity is equal to $2$ due to the Jordan block. The correspondence between classical and quantum resonant states is thus different from the non-elementary group case for these particular points.\\

Finally exactly the same proof as Corollary \ref{otherbands} gives the full Ruelle resonance spectrum.
\begin{corr}\label{otherbands2}
Let $M=\Gamma\backslash \hh^{2}$ be a smooth oriented convex co-compact hyperbolic surface 
and let $SM$ be its unit tangent bundle. Then for each $\la_0\in \cc$ with ${\rm Re}(\la_0)\leq 0$, 
$k\in\nn_0$, and $j\in\nn$, the operator $U_+^k$ is injective on $V_0^j(\la_0+k)$ and we get 
\[ V_{k}^j(\la_0)=\bigoplus_{\ell=0}^k U_+^\ell(V_0^j(\la_0+\ell)).\]
\end{corr}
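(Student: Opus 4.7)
The plan is to follow verbatim the structure of the proof of Corollary \ref{otherbands}, since the algebraic identities involved rely only on the commutation relations \eqref{commutation} and on the defining properties of the spaces $V_m^j(\la_0)$; the only thing to check is that the additional support and wave-front conditions in the definition \eqref{Vm2} are compatible with the operators $U_\pm$, and that the exceptional case $\la_0+k=0$ of Corollary \ref{otherbands} does not occur in the convex co-compact setting.

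First, I would observe that $U_+$ and $U_-$ are smooth real vector fields on $SM$, hence first-order differential operators. Consequently for any distribution $u\in \mc{D}'(SM)$ one has
\[
\supp(U_\pm u)\subset \supp(u),\qquad \WF(U_\pm u)\subset \WF(u).
\]
In particular if $u$ is supported in $\Lambda_+$ with $\WF(u)\subset E_u^*$, then so is $U_+^k u$ for every $k\geq 0$. Combined with the algebraic relation \eqref{relationk=0}, namely
\[
(X+\la_0)^j U_+^k = U_+^k (X+\la_0+k)^j,
\]
this shows that $U_+^k$ maps $V_0^j(\la_0+k)$ into $V_k^j(\la_0)$. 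The same reasoning shows that $U_-^k$ maps $V_k^j(\la_0)$ into $V_0^j(\la_0+k)$ with kernel exactly $V_{k-1}^j(\la_0)$, so that the statement reduces to proving that the composition
\[
U_-^k U_+^k : V_0^j(\la_0+k)\to V_0^j(\la_0+k)
\]
is injective for every $k\geq 0$ and every $j\geq 1$.

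Next, I would reproduce the algebraic computation in Corollary \ref{otherbands}: using $[U_+,U_-]=2X$ one gets
\[
U_-^k U_+^k = k!\,\prod_{\ell=1}^{k}(-2X-k+\ell) \quad\text{on } \ker U_-.
\]
On $V_0^1(\la_0+k)$ the operator $X$ acts as $-(\la_0+k)\mathrm{Id}$, so this becomes the scalar $k!\prod_{\ell=1}^k(2\la_0+k+\ell)$, which vanishes only when $\la_0+k$ is a negative half-integer in $-k/2,\dots,-(k+1)/2$, or when $\la_0+k=0$. The nonzero half-integer cases are immediately excluded because $V_0^1(-\demi-m)=0$ for $m\in\nn_0$ by parts 2) and 3) of Theorem \ref{classicquantic}, together with the fact that $V_0^1(\la_0+k)=0$ whenever $\Re(\la_0+k)>0$ (no Ruelle resonances in the right half plane). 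The only remaining issue is the point $\la_0+k=0$.

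The crucial new input, which is the step where the convex co-compact case actually differs from the compact one, is that $V_0^j(0)=0$. Indeed, by Theorem \ref{classicquantic} 1), the map ${\pi_0}_*:V_0^j(0)\to {\rm Res}_{\Delta}^j(1)$ is an isomorphism, and ${\rm Res}_{\Delta}^1(1)=\ker_{L^2}(\Delta_M)=0$ since $M$ has infinite volume (equivalently, the only harmonic function in $\rho^1 C_{\mathrm{ev}}^\infty(\bbar M)$ is $0$ by Proposition \ref{caractreson} 2)); thus ${\rm Res}_{\Delta}^j(1)=0$ for all $j$. This removes the exception of Corollary \ref{otherbands}, and the scalar $k!\prod_{\ell=1}^k(2\la_0+k+\ell)$ acts injectively on $V_0^1(\la_0+k)$. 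The induction on $j$ to upgrade from $j=1$ to general $j$ is identical to the one in the proof of Corollary \ref{otherbands}, using that $(X+\la_0)$ commutes with $U_-^k U_+^k$ and preserves $V_0^{j-1}(\la_0+k)$. The hardest conceptual point is the verification that $V_0^j(0)=0$; the rest is purely a translation of the compact argument to the support-restricted setting.
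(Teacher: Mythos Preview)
Your proposal is correct and follows exactly the route the paper intends: the paper's proof is literally the one-line remark ``exactly the same proof as Corollary \ref{otherbands}'', and you have correctly supplied the two points that need checking in the convex co-compact setting, namely that $U_\pm$ preserve the support and wave-front conditions defining $V_m^j(\la_0)$, and that the exceptional case $\la_0+k=0$ is empty because $V_0^j(0)\simeq {\rm Res}_\Delta^j(1)=0$ (no $L^2$ harmonic functions on an infinite-volume surface). One small slip: the zeros of the scalar $\prod_{\ell=1}^k(2\la_0+k+\ell)$ occur at $\la_0+k=(k-\ell)/2\in\{0,\tfrac12,1,\dots,\tfrac{k-1}{2}\}$, which are \emph{non-negative} half-integers, not negative ones; your appeal to parts 2)--3) of Theorem \ref{classicquantic} is therefore irrelevant, and the positive values are disposed of solely by the absence of Ruelle resonances in $\{\Re\la>0\}$, which you also invoke.
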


\section{Zeta functions}\label{sec:zeta_functions}

The zeta function of the flow is defined by 
\begin{equation}\label{Z0} 
Z_X(\la)=\exp\Big(-\sum_{\gamma_0}\sum_{k=1}^\infty \frac{1}{k}\frac{e^{-\la k \ell(\gamma_0)}}{|\det(1-P(\gamma_0)^k)|}\Big)
\end{equation}
where $\gamma_0$ are primitive closed geodesics and $P(\gamma_0)$ is the linearized Poincar\'e map
of the geodesic flow on this geodesic. The function converges for ${\rm Re}(\la)>\delta_\Gamma$ where 
$\delta_{\Gamma}<1$ is the Hausdorff dimension of the limit set $\Lambda_{\Gamma}$ (see \cite{Pa}).
By \cite[Theorem 4]{DyGu}, the function $Z_X(\la)$ admits a holomorphic extension to $\la\in\cc$ with 
zeros at Ruelle resonances and the order of a Ruelle resonance $\la_0$ as a zero of $Z_X(\la)$ is given by 
\begin{equation}\label{ordre} 
{\rm ord}_{\la_0}(Z_X(\la))={\rm Rank}(\Pi^X_{\la_0})
\end{equation}
where $\Pi^X_{\la_0}=-{\rm Res}_{\la_0}R_X(\la)$ is the projector on generalized Ruelle resonant states.
In particular we deduce from \eqref{ordre} and Theorem \ref{otherbands2} the 
\begin{prop}\label{zerosZ_X}
The order of $\la_0$ as a zero of $Z_X(\la)$ is given by 
\[{\rm ord}_{\la_0}(Z_X(\la))=\dim {\rm Res}_{X}(\la_0)=\sum_{p\in \nn_0} 
\dim ({\rm Res}_{X}(\la_0+p)\cap \ker U_-).\]
where ${\rm Res}_X(\la_0)=\cup_{j\geq 1}{\rm Res}_X^j(\la_0)$ is the space of generalized resonant 
states at $\la_0$, with ${\rm Res}_X^j(\la_0)$ defined in \eqref{Resk2}.
\end{prop}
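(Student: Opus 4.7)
The plan is to combine the identity \eqref{ordre} with the band decomposition of Corollary \ref{otherbands2}. The first equality ${\rm ord}_{\la_0}(Z_X(\la))=\dim {\rm Res}_X(\la_0)$ is immediate, since by \eqref{ordre} the order equals ${\rm Rank}(\Pi^X_{\la_0})$, and $\Pi^X_{\la_0}$ is the residue projector onto the finite dimensional space ${\rm Res}_X(\la_0)$. The second equality requires showing that ${\rm Res}_X(\la_0)$ is exhausted by a finite band $V_m^J(\la_0)$ to which Corollary \ref{otherbands2} applies, and then reading off the dimensions summand by summand.

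The first step is to extend Lemma \ref{decomp} from order one to arbitrary order. The commutator $[X,U_-]=-U_-$ yields by induction $[X,U_-^m]=-mU_-^m$, hence
\[(X+\la_0+m)^{j} U_-^m = U_-^m (X+\la_0)^{j}.\]
Therefore if $u\in {\rm Res}_X^j(\la_0)$, then $U_-^m u$ is a generalized Ruelle resonant state at $\la_0+m$, with wavefront set contained in $E_u^*$ and support in $\Lambda_+$. By Theorem \ref{dygu} there are no Ruelle resonances in $\{{\rm Re}(\la)>0\}$, so $U_-^m u = 0$ as soon as $m>-{\rm Re}(\la_0)$. Since ${\rm Res}_X(\la_0)=\bigcup_j {\rm Res}_X^j(\la_0)$ is finite dimensional by Theorem \ref{dygu}, one may choose $J$ large enough that ${\rm Res}_X(\la_0+\ell)={\rm Res}_X^J(\la_0+\ell)$ for every $\ell\in\{0,\dots,m\}$; consequently ${\rm Res}_X(\la_0)=V_m^J(\la_0)$ and $V_0^J(\la_0+\ell)=V_0(\la_0+\ell):={\rm Res}_X(\la_0+\ell)\cap\ker U_-$.

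Applying Corollary \ref{otherbands2} to this band gives the direct sum decomposition
\[V_m^J(\la_0)=\bigoplus_{\ell=0}^{m} U_+^\ell\bigl(V_0^J(\la_0+\ell)\bigr),\]
with $U_+^\ell$ injective on each summand. Taking dimensions yields
\[\dim {\rm Res}_X(\la_0)=\sum_{\ell=0}^{m}\dim\bigl({\rm Res}_X(\la_0+\ell)\cap\ker U_-\bigr).\]
For $\ell>-{\rm Re}(\la_0)$ the space ${\rm Res}_X(\la_0+\ell)$ vanishes, so the truncated sum extends without change to the full sum over $p\in\nn_0$, yielding the claim. The only genuinely new ingredient beyond the cited results is the higher-order extension of Lemma \ref{decomp}, which reduces to the commutator identity above; the remainder of the argument is bookkeeping.
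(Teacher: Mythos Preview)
Your proof is correct and follows essentially the same approach as the paper, which simply states the proposition as an immediate consequence of \eqref{ordre} and Corollary~\ref{otherbands2}. You make explicit the one step the paper leaves implicit: the extension of Lemma~\ref{decomp} to generalized resonant states (arbitrary $j$), which is needed to conclude ${\rm Res}_X(\la_0)=V_m^J(\la_0)$ for suitable $m,J$ before Corollary~\ref{otherbands2} can be applied; this is precisely the content of Proposition~\ref{thm:higher_bands} announced in the introduction.
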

The Selberg zeta function $Z_{S}(\la)$ is defined by 
\begin{equation}\label{Selberg} 
Z_{S}(\la):= \exp\Big( -  \sum_{\gamma_0}\sum_{k=1}^\infty \frac{1}{k}\frac{e^{-\la k\ell(\gamma_0)}}{\det (1-P_s(\gamma_0)^k)}
\Big)
\end{equation}
where the sum is over all primitive closed geodesics and $P_s(\gamma_0)=P(\gamma_0)|_{E_s}$ 
is the contracting part of $P(\gamma_0)$. For each closed geodesic $\gamma$ on $M=\Gamma\backslash\hh^2$, 
there is an associated conjugacy class in $\Gamma$, with a representative that we still denote by 
$\gamma\in \Gamma$ and whose axis in $\hh^{2}$ descends to the geodesic $\gamma$; the linear 
Poincar\'e map along this closed geodesic is easy to compute since $\gamma$ is conjugated to 
$z\mapsto e^{\ell(\gamma)}z$ in the upper half-space model of $\hh^2$. Using this expression 
we get $P(\gamma_0)|_{E_s}=e^{-\ell(\gamma_0)}{\rm Id}$ and $P(\gamma_0)|_{E_u}=e^{\ell(\gamma_0)}{\rm Id}$
thus 
\begin{equation}\label{poincare} 
\begin{split}
 e^{-\frac{1}{2}k\ell(\gamma_0)}\det (1-P_s(\gamma_0)^k)^{-1}
= e^{-\frac{1}{2}k\ell(\gamma_0)}(1-e^{-k\ell(\gamma_0)})^{-1}=\sum_{p=0}^\infty e^{-k\ell(\gamma_0)(1/2+p)}
\end{split}\end{equation}
and $|\det(1-P(\gamma_0)^k)|=e^{k\ell(\gamma_0)}\det(1-P_s(\gamma_0)^k)^2$. This implies the formula 
\begin{equation}\label{relationzetas}
Z_X(\la)=\prod_{p=1}^\infty Z_S(\la+p), \quad Z_S(\la)=\frac{Z_X(\la-1)}{Z_X(\la)}.
\end{equation} 
By combining \eqref{relationzetas}  with Proposition \ref{zerosZ_X} and 
Theorem \ref{classicquantic}, we obtain 
\begin{corr}\label{selberg}
Let $M=\Gamma\backslash \hh^2$ be a convex co-compact oriented hyperbolic smooth 
surface and assume $\Gamma$ is non-elementary. Then its Selberg zeta function $Z_S(s)$ 
is holomorphic with zeros given by:\\
1) quantum resonances $s_0 \notin -\nn_0$ with order 
\[
{\rm ord}_{s_0}(Z_S(s))={\rm Rank}({\rm Res}_{s_0}R_{\Delta}(s))
\]
2) negative integers $-n\in-\nn_0$ with order 
\[ {\rm ord}_{-n}(Z_S(s))=\dim_{\rr} H_{n+1}(M)=\left\{
\begin{array}{ll}
(2n+1)|\chi(M)| & \textrm{ if }n>0,\\
|\chi(M)|+1 & \textrm{ if }n=0
\end{array}\right.\]
where $H_{n+1}(M)$ are defined by \eqref{HnH-n}.
\end{corr}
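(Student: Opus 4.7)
The plan is to combine the factorization $Z_S(\la)=Z_X(\la-1)/Z_X(\la)$ from \eqref{relationzetas} with the order formula in Proposition \ref{zerosZ_X}, so as to collapse the order of $Z_S$ at any $s_0$ to a single dimension, and then to identify that dimension via Theorem \ref{classicquantic}.

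First I would introduce the shorthand $d(\la):=\dim_\cc\big({\rm Res}_X(\la)\cap\ker U_-\big)$. Proposition \ref{zerosZ_X} says ${\rm ord}_{\la_0}(Z_X)=\sum_{p\ge 0}d(\la_0+p)$ (a finite sum), so taking orders in \eqref{relationzetas} yields the telescoping identity
\[
{\rm ord}_{s_0}(Z_S)=\sum_{p=0}^\infty d(s_0-1+p)-\sum_{p=0}^\infty d(s_0+p)=d(s_0-1)\ge 0.
\]
In particular $Z_S$ is entire, as all terms indexed by $p\ge 1$ cancel.

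The corollary then reduces to evaluating $d(s_0-1)$, which is a straightforward case analysis via Theorem \ref{classicquantic}. If $s_0\notin -\nn_0$, then $s_0-1\notin -\nn$, so depending on whether $s_0-1$ lies in the generic stratum, in $-\demi-\nn$, or equals $-\demi$, parts (1), (2), or (3) of Theorem \ref{classicquantic} provide ${\pi_0}_*:V_0(s_0-1)\xrightarrow{\sim}{\rm Res}_\Delta(s_0)$ as complex vector spaces (with matching Jordan structure, and both spaces trivial in the excluded stratum $\demi-\nn$). Hence $d(s_0-1)={\rm Rank}\,\Pi^\Delta_{s_0}$, which is statement~(1). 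If $s_0=-n\in -\nn_0$ then $s_0-1=-(n+1)\in -\nn$, and part~(4) of Theorem~\ref{classicquantic} (which uses the non-elementary hypothesis on $\Gamma$) produces an isomorphism of real vector spaces $i^{n+2}{\pi_{n+1}}_*:V_0^1(-(n+1))\xrightarrow{\sim}H_{n+1}(M)$ with no Jordan blocks.

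The only mildly subtle point will be the real-versus-complex dimension bookkeeping at these integer points. Because $-(n+1)\in\rr$, the remark following \eqref{Vm2} asserts that $V_0^1(-(n+1))$ admits a basis of real-valued distributions, so its real and complex dimensions agree, and therefore $d(-(n+1))=\dim_\rr H_{n+1}(M)$. The concrete numerical formula then follows from \eqref{dimreHn}, proving statement~(2). Beyond this bookkeeping I do not anticipate any genuine obstacle: all the hard analysis is already packaged in Proposition~\ref{zerosZ_X} and Theorem~\ref{classicquantic}, and the corollary is merely a telescoping together with a dictionary lookup.
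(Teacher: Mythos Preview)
Your proof is correct and follows exactly the approach the paper indicates: combine the factorization \eqref{relationzetas} with Proposition~\ref{zerosZ_X} to obtain the telescoping identity ${\rm ord}_{s_0}(Z_S)=d(s_0-1)$, then read off $d(s_0-1)$ from Theorem~\ref{classicquantic}. The paper merely states ``By combining \eqref{relationzetas} with Proposition~\ref{zerosZ_X} and Theorem~\ref{classicquantic}'' without spelling out the telescoping or the real/complex dimension bookkeeping, so your writeup is a faithful (and more explicit) version of their argument.
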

The description of the zeros of $Z_\Gamma(s)$ in this setting was also done by Borthwick-Judge-Perry \cite{BJP} (including the case with cusps). 
We remark that in \cite{BJP} the topological contribution in the order of $Z_S(s)$ at $s=0$ is $|\chi(M)|$. 
Furthermore there is a spectral contribution coming from the multiplicity of $0$ as a quantum resonance.
This multiplicity is exactly $1$ by the proof of Theorem 1.2 in \cite{GuGu}, the resonant states being the constants  (see the discussion after the proof of \cite[Theorem 1.2]{GuGu}), which matches with part 2) in Corollary \ref{selberg}.
For the points $s=-n$ with $n\in\nn$, \cite{BJP} obtain a zero of order $(2n+1)|\chi(M)|+\nu(-n)$ where 
$\nu(-n)={\rm Rank}({\rm Res}_{-n}R_{\Delta}(s))$ is the order of $-n$ as a resonance, and thus 2) of 
Corollary \ref{selberg} implies that $\nu(-n)=0$ for non-elementary groups $\Gamma$.

\end{document}